\documentclass[10pt]{oxarticle}

\usepackage{graphicx, float, array, xspace, amscd, amsmath, amsthm, amssymb, latexsym, mathtools, bbm}
\usepackage[sort&compress, comma, square, numbers]{natbib}
\usepackage{multicol}
\usepackage{tikz}
\usepackage{bbm}
\usetikzlibrary{calc,arrows}
\usepackage{html}
\usepackage[T1]{fontenc}
\usepackage{lmodern}

\usepackage{dsfont}

\newlength{\xtrawidth}
\setlength{\xtrawidth}{.3cm}
\newlength{\xtraheight}
\setlength{\xtraheight}{0cm}
\addtolength{\textwidth}{\xtrawidth}
\addtolength{\textwidth}{\xtrawidth}
\addtolength{\oddsidemargin}{-3\xtrawidth}
\addtolength{\evensidemargin}{-\xtrawidth}
\addtolength{\textheight}{\xtraheight}
\addtolength{\textheight}{\xtraheight}
\addtolength{\topmargin}{-\xtraheight}



\DeclareFontFamily{OT1}{pzc}{}
\DeclareFontShape{OT1}{pzc}{m}{it}{<-> s * [1.200] pzcmi7t}{}
\DeclareMathAlphabet{\mathpzc}{OT1}{pzc}{m}{it}

\newcommand{\cK}{\mathcal{K}}

\newcommand{\cO}{\mathcal{O}}



\newcommand{\IC}{\mathbb{C}}

\newcommand{\IP}{\mathbb{P}}

\newcommand{\IR}{\mathbb{R}}

\newcommand{\IZ}{\mathbb{Z}}


\font\csc=cmcsc10

\DeclareFontFamily{U}{wncy}{}
\DeclareFontShape{U}{wncy}{m}{n}{<->wncyr10}{}
\DeclareSymbolFont{mcy}{U}{wncy}{m}{n}
\DeclareMathSymbol{\sha}{\mathord}{mcy}{"58}



\newcommand{\place}[3]{\vbox to0pt{\kern-\parskip\kern-7pt
                             \kern-#2truein\hbox{\kern#1truein #3}
                             \vss}\nointerlineskip}


\hfuzz=1pt
\vfuzz=2pt


\newcommand{\beq}{\begin{equation}}
\newcommand{\eeq}{\end{equation}}
\newcommand{\beqnn}{\begin{equation*}}
\newcommand{\eeqnn}{\end{equation*}}
\newcommand{\fref}[1]{Figure~\ref{#1}}

\newcommand{\+}{\hphantom{-}}

\newcommand{\cicy}[2]{\begin{matrix} #1\end{matrix}\!\left[\begin{matrix}#2 \end{matrix}\right]}


\newcommand{\quotient}[1]{_{\hskip-2pt\lower1pt\hbox{$/$}\lower2pt\hbox{\hskip-1pt$#1$}}}


\setlength{\textfloatsep}{10pt}
\numberwithin{equation}{section}
\proofmodetrue

\newtheorem{theorem}{Theorem}

\newcommand{\nnbe}{\begin{equation*}}
\newcommand{\nnee}{\end{equation*}}
\newcommand{\bea}{\begin{eqnarray}}
\newcommand{\eea}{\end{eqnarray}}
\newcommand{\ba}{\begin{align}}
\newcommand{\ea}{\end{align}}
\newcommand{\bi}{\begin{itemize}}
\newcommand{\ei}{\end{itemize}}
\newtheorem{thm}{Theorem}[section]
\theoremstyle{definition}
\newtheorem{defn}[thm]{Definition}
\theoremstyle{plain}
\newtheorem{proposition}[thm]{Proposition}
\newtheorem{conjecture}[theorem]{Conjecture}
\newtheorem{con}[thm]{Conjecture}
\newtheorem{crl}[thm]{Corollary}

\theoremstyle{definition}

\newtheorem{remark}[thm]{Remark}
\theoremstyle{definition}
\newtheorem{example}[thm]{Example}
\newtheorem*{nndefn}{Definition}
\theoremstyle{plain}

\newsavebox{\overlongequation}

\newcommand{\fl}[1]{{#1}'}					
\newcommand{\contr}[1]{{#1}_\mathrm{sing}}

\newcommand{\xdashrightarrow}[2][]{\ext@arrow 0359\rightarrowfill@@{#1}{#2}}

\newcommand{\xdasharrow}[2][->]{
\tikz[baseline=-\the\dimexpr\fontdimen22\textfont2\relax]{
\node[anchor=south,font=\scriptsize, inner ysep=1.5pt,outer xsep=2.2pt](x){#2};
\draw[shorten <=3.4pt,shorten >=3.4pt,dashed,#1](x.south west)--(x.south east);
}
}

\newcommand{\ceil}[1]{\left\lceil{#1}\right\rceil}

\newcommand{\PP}{\mathbb{P}}
\newcommand{\RR}{\mathbb{R}}


\def\+{\oplus}                   
\def\*{\otimes}                  


\def\mov{\operatorname{Mov}}

\def\eff{\operatorname{Eff}}
\def\nef{\operatorname{Nef}}

\proofmodefalse

\begin{document}
\pagestyle{empty}
\begin{center}
\null\vskip0.3in
{\LARGE \textsc{Generating Functions  \\[3pt]
for Line Bundle Cohomology Dimensions \\[3pt] on Complex Projective Varieties}\\[0.59in]}
{\csc Andrei Constantin\\[1.3cm]}
{\it 
Rudolf Peierls Centre for Theoretical Physics\\
University of Oxford\\ 
Parks Road, Oxford OX1 3PU, UK}
\footnotetext[1]{andrei.constantin@physics.ox.ac.uk}

\vfill
{\bf Abstract\\[2ex]}
\parbox{6.0in}{\setlength{\baselineskip}{14pt}
This paper explores the possibility of constructing multivariate generating functions for all cohomology dimensions of all holomorphic line bundles on certain complex projective varieties of Fano, Calabi-Yau and general type in various dimensions and Picard numbers. Most of the results are conjectural and rely on explicit cohomology computations. We first propose a generating function for the Euler characteristic of all holomorphic line bundles on complete intersections in products of projective spaces and toric varieties. This generating function is constructed by expanding the Hilbert-Poincar\'e series associated with the coordinate ring of the variety around all possible combinations of zero and infinity and then summing up the resulting contributions with alternating signs. Similar generating functions are proposed for the individual cohomology dimensions of all holomorphic line bundles on certain complete intersections, including examples of Mori and non-Mori dream spaces. Surprisingly, the examples studied indicate that a single generating function encodes both the zeroth and all higher cohomologies upon expansion around different combinations of zero and infinity, raising the question whether such generating functions determine the variety uniquely. 
}

\end{center}
\newpage
\begingroup
\baselineskip=14pt
\tableofcontents
\endgroup
\newpage
\setcounter{page}{1}
\pagestyle{plain}
\section{Introduction and Overview}

Recent studies of line bundle cohomology data on certain classes of complex projective varieties have revealed the existence of relatively simple cohomology functions on spaces where one would not normally expect to find them. Initially, such functions were found empirically -- both for the zeroth and the higher cohomologies -- through a combination of direct observation~\cite{Constantin:2018hvl, Larfors:2019sie, Brodie:2019pnz} and machine learning~\cite{Klaewer:2018sfl, Brodie:2019dfx}, followed by a series of definitive results in dimensions two and three concerning the exact form of the zeroth line bundle cohomology function~\cite{Brodie:2019ozt, Brodie:2020wkd, Brodie:2020fiq}. 

The prospect of expressing line bundle cohomology dimensions in closed form, especially in the case of Calabi-Yau varieties, is of particular interest in string theory, where bundle-valued cohomology plays a crucial role in the computation of particle spectra; indeed, this was the primary motivation behind the studies cited above, as~reviewed, for instance, in Refs.~\cite{Brodie:2021zqq, Constantin:2022jyd}. While in straightforward cases it is feasible to compute line bundle cohomology directly from its definition, obtaining this information for more involved examples such as those arising in string theory compactifications can be challenging, if at all possible. As such, obtaining closed form expressions for line bundle cohomology dimensions can significantly simplify the analysis of string theory models~\cite{Abel:2023zwg, Constantin:2021for}, providing strong motivation for a more thorough investigation. From a mathematical point of view, sheaf cohomology and, in particular, line bundle cohomology represents a powerful tool in algebraic geometry and understanding the extent to which the closed form cohomology functions identified so far can be further generalised is an important task. Despite the aspiration for a broad generalisation, the present paper remains focused on specific examples. However, the paper brings a shift of perspective, from closed form cohomology functions to cohomology generating functions. Moreover, the examples discussed here include varieties of all types, Fano, Calabi-Yau and general type in arbitrary dimension. 

The current status of closed form line bundle cohomology functions can be briefly summarised as follows. In~dimension two, for toric surfaces, weak Fano surfaces and K3 surfaces, it was shown that the zeroth cohomology function arises through an interplay between Zariski decomposition and certain vanishing theorems, leading to a decomposition of the effective cone into a number of polyhedral  sub-cones, inside which the zeroth cohomology function can be equated to a topological index~\cite{Brodie:2019ozt, Brodie:2020wkd}. In fact, in all these cases the cohomology chambers correspond either to the nef cone or to Zariski chambers~\cite{Brodie:2020wkd, Bauer04}. The information required to specify the zeroth cohomology function and the associated chamber structure consists of the Mori cone generators (assuming their number is finite) and the intersection form on the surface. The higher cohomology functions are then inferred by Serre duality and the Riemann-Roch theorem.

In dimension three, the studies undertaken in Refs.~\cite{Brodie:2020fiq, Brodie:2021nit} focused mainly on Calabi-Yau varieties, indicating a similar chamber decomposition structure of the effective cone. Although no general statements have been proven, in the examples studied so far the zeroth cohomology function was understood to arise through a combination of a three-dimensional version of Zariski decomposition, the Kawamata-Viehweg vanishing theorem and the invariance of the dimension of linear systems of divisors under flops. As such, the information required to specify the zeroth cohomology function, except possibly along certain boundaries of the effective cone which needed a separate treatment, consisted of the collection of all flops connecting the variety to its birational models, complemented by knowledge of the set of all rigid divisors on the variety. Much less could be said about the structure of the first and second cohomology functions, although the empirical results of Refs.~\cite{Brodie:2020fiq, Brodie:2021nit} suggest that these can also be expressed in closed form. 

In the present paper we ask the natural question whether line bundle cohomology data can be encoded into exact generating functions. An indication that this may indeed be the case comes from the well-known fact that the Hilbert-Poincar\'e series of a projective variety represents a generating function for the dimensions of the graded pieces of its homogeneous coordinate ring. Since very positive line bundles have all global sections given by polynomials, the Hilbert-Poincar\'e series encodes the information about the dimensions of their zeroth cohomologies. However, in general there are effective line bundles that have non-polynomial sections.

To~remediate this, one needs to consider the Hilbert-Poincar\'e series associated with the Cox ring of the variety which, by definition, encodes the zeroth cohomology dimensions of all line bundles~\cite{Hu:2000}. 
Several of the examples presented in this paper follow this approach for constructing a generating function for the zeroth cohomology. On the other hand, writing down an explicit presentation for the Cox ring can be difficult in general, requiring a lot of information about linear systems and divisors on the variety. As discussed below, an alternative method for constructing the zeroth line bundle cohomology generating function is to exploit directly the birational structure of the variety, an approach which in particular allows us to treat examples where the Cox ring is not finitely generated (non-Mori dream spaces). For the higher cohomologies there is no obvious candidate, however, the examples discussed here suggest that the same generating function that encodes the zeroth line bundle cohomology, also encodes the higher cohomologies upon an appropriate choice of the points of expansion.  

Most of the results in this paper are conjectural and rely on explicit cohomology calculations for a sufficiently large range of line bundles, performed with various computer packages (mainly~\cite{cicypackage}, also~\cite{cicytoolkit, cohomCalg:Implementation}). In the remaining part of this section we give an overview of the main results. 

{\bfseries The Euler characteristic.}
We start with a generating function for the Euler characteristic of all line bundles over complete intersections in products of projective spaces or toric varieties. 

\begin{conjecture}\label{con:Euler_intro}
Let $X\hookrightarrow \IP^{n_1}\times \IP^{n_2}\times\ldots\times \IP^{n_p}$ be a complete intersection in a product of $p$ projective spaces, cut out by $q$ homogeneous polynomials of multi-multidegrees $d^{(i)} = (d_{1,i}, d_{2,i},\ldots,d_{p,i})$ for $1\leq i\leq q$. Let $H_i=\left( \cO_{\IP^{n_1}}(0)\otimes\ldots\otimes \cO_{\IP^{n_i}}(1) \otimes\ldots\otimes  \cO_{\IP^{n_p}}(0)\right)|_X$ and
\begin{equation}
\begin{aligned}
HS(X,t_1,t_2,\ldots, t_p) &= \frac{\prod_{j=1}^q \left(1- \prod_{i=1}^p t_i^{d_{i,j}} \right)  }{\prod_{i=1}^p \left(1- t_i \right)^{n_i+1}} ~
\end{aligned}
\end{equation}
be the multivariate Hilbert-Poincar\'e series of $X$, where the expansion is around $t_i=0$.
A generating function for the Euler characteristic of all line bundles on $X$ obtained as restrictions from the ambient variety is given by 
\begin{equation}\label{eq:GFeuler}
\sum_{\sigma\in S} (-1)^{{\rm sign}(\sigma)} HS\left(X,\begin{array}{ccc} t_1& \ldots& t_p\\\sigma(1)&\ldots& \sigma(p) \end{array}\right)  = \sum_{m_i\in \mathbb Z} \chi(X, H_1^{\otimes m_1}\otimes\ldots \otimes H_p^{\otimes m_p})\, t_1^{m_1}\ldots t_p^{m_p}
\end{equation}
where the order of expansion for the Hilbert-Poincar\'e series is indicated by the order in which the variables appear as arguments and the point of expansion is indicated below each variable, namely $t_i=\sigma(i)$. Moreover, $S=\{0,\infty\}^{\times p}$, and ${\rm sign}(\sigma)$ is equal to the number of times $\infty$ appears in $\sigma$.
\end{conjecture}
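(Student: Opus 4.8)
The plan is to split the statement into a purely formal reciprocity for rational functions and a geometric identification of the polynomial it produces. Write $t^m=t_1^{m_1}\cdots t_p^{m_p}$ and $R(t)=HS(X,t)=N(t)\big/\prod_{i=1}^p(1-t_i)^{n_i+1}$, where $N(t)=\prod_{j=1}^q\bigl(1-\prod_i t_i^{d_{i,j}}\bigr)$ is a genuine polynomial. The denominator is a product of one-variable factors, so each $t_i$ may be expanded independently around $0$ or around $\infty$, and the left side of \eqref{eq:GFeuler} is the signed corner sum $\sum_{\sigma\in\{0,\infty\}^p}(-1)^{{\rm sign}(\sigma)}R|_\sigma$. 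I would first prove the following reciprocity, of Stanley--Ehrhart flavour: for any $R(t)=N(t)\prod_i(1-t_i)^{-a_i}$ with $N$ a Laurent polynomial, this signed corner sum equals $\sum_{m\in\IZ^p}P_R(m)\,t^m$, where $P_R$ is the unique polynomial agreeing with the coefficients of the expansion of $R$ about $t=0$ for all $m$ deep in the positive orthant. Granting this, the conjecture reduces to the identity $P_R(m)=\chi(X,H_1^{\otimes m_1}\otimes\cdots\otimes H_p^{\otimes m_p})$.

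The reciprocity lemma is multilinear in $N$ and factorises over the variables, so it suffices to treat one variable and one monomial, $t^c(1-t)^{-a}$. Expanding about $0$ gives $\sum_{m\ge c}\binom{m-c+a-1}{a-1}t^m$; expanding about $\infty$ and using $\binom{-k-1}{a-1}=(-1)^{a-1}\binom{k+a-1}{a-1}$ gives $-\sum_{m\le c-a}\binom{m-c+a-1}{a-1}t^m$, whose leading sign matches $(-1)^{{\rm sign}(\sigma)}$ for a single $\infty$. The two ranges miss precisely $c-a<m<c$, but there the polynomial $\binom{m-c+a-1}{a-1}$ vanishes, since its upper index lies in $\{0,\dots,a-2\}$, strictly below $a-1$. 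Hence the signed difference reassembles the full two-sided series $\sum_{m\in\IZ}\binom{m-c+a-1}{a-1}t^m$. Taking tensor products over $i$ and summing over the monomials of $N$ yields $P_R(m)=\sum_e N_e\prod_i\binom{m_i-e_i+a_i-1}{a_i-1}$, a genuine polynomial in $m\in\IZ^p$.

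It remains to identify $P_R$ with the Euler characteristic. By Hirzebruch--Riemann--Roch, $\chi\bigl(X,H_1^{\otimes m_1}\otimes\cdots\otimes H_p^{\otimes m_p}\bigr)=\int_X e^{\sum_i m_i\,c_1(H_i)}\,\mathrm{td}(X)$ is itself a polynomial $P_\chi(m)$ in $m\in\IZ^p$, valid for all integer multidegrees. Since $X$ is a complete intersection, its defining polynomials form a regular sequence, so the multigraded homogeneous coordinate ring is Cohen--Macaulay with Hilbert series exactly $R(t)$, and for all $m$ with every $m_i$ sufficiently large its degree-$m$ graded piece computes $H^0(X,H_1^{\otimes m_1}\otimes\cdots\otimes H_p^{\otimes m_p})$. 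In this deep positive region Serre vanishing kills all higher cohomology, so $P_R(m)=H^0=\chi(X,\dots)=P_\chi(m)$. Two polynomials agreeing on a set containing a translate of $\IZ_{\ge 0}^p$ coincide identically, whence $P_R=P_\chi$ and the claimed identity follows.

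The delicate point, and the one I expect to be the main obstacle, is the precise sense of the signed corner sum: the individual expansions $R|_\sigma$ live in different formal Laurent completions, so the equality must be read coefficientwise, verifying that for each fixed monomial $t^m$ only finitely many corners contribute and that their signed contributions total $P_R(m)$ --- which is exactly what the vanishing-binomial bookkeeping above guarantees. This bookkeeping, together with the input that the coordinate-ring graded pieces really compute $H^0$ for large multidegree (arithmetic Cohen--Macaulayness and projective normality of complete intersections), forms the technical core; both are standard, which is why the Euler-characteristic statement is the softest member of the family of conjectures, in contrast to the individual cohomology dimensions, whose chamber-dependent behaviour admits no comparably formal explanation.
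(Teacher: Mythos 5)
The paper does not prove this statement: it is presented as Conjecture~\ref{con:Euler_intro} (= Conjecture~\ref{con:Euler}), supported only by the computation in Section~\ref{sec:EulerChar} of the single corner $\sigma=(\infty,\ldots,\infty)$ via Serre duality --- which shows that the two extreme corners reproduce $\chi$ deep in the positive and negative orthants --- together with numerical evidence. Your argument goes well beyond that heuristic and, as far as I can check, actually establishes the conjecture. The one-variable reciprocity is correct: for $t^c(1-t)^{-a}$ the $\infty$-expansion is $-\sum_{m\le c-a}\binom{m-c+a-1}{a-1}t^m$, the prefactor $(-1)^{{\rm sign}(\sigma)}$ flips its sign, and on the gap $c-a<m<c$ the binomial has non-negative upper index strictly below $a-1$ and so vanishes, reassembling the two-sided series of the polynomial $\binom{m-c+a-1}{a-1}$. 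Because the denominator $\prod_i(1-t_i)^{n_i+1}$ is variable-separated and $N$ is a genuine polynomial, the signed corner sum factorises over variables and is multilinear in the monomials of $N$; this also disposes of the order-of-expansion caveat in the statement, which is vacuous here. The geometric half is likewise sound: the Koszul resolution of the regular sequence gives $HS_{S/I}=R$, the multigraded Serre correspondence gives $(S/I)_m\cong H^0$ for componentwise large $m$, Serre vanishing gives $h^0=\chi$ there, and two polynomials agreeing on a translate of $\IZ_{\ge0}^p$ coincide.

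Two minor points worth nailing down in a written version. First, the identification $\chi=P_\chi(m)$ for \emph{all} $m\in\IZ^p$ does not need smoothness if you invoke Snapper's lemma (the Euler characteristic of $\cO_X(m_1D_1+\ldots+m_pD_p)$ is a numerical polynomial on any proper scheme) rather than Hirzebruch--Riemann--Roch; this matters because the paper explicitly allows mild singularities. Second, the statement that the graded pieces of the multihomogeneous coordinate ring compute $H^0$ for $m\gg0$ componentwise should be referenced or proved via local cohomology with respect to the irrelevant ideal, since it is the one place where "complete intersection" (hence regular sequence, hence the claimed rational form of the Hilbert series) genuinely enters. With those references supplied, this upgrades the paper's Conjecture~1 to a theorem.
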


The conjecture is discussed in Section~\ref{sec:EulerChar}. The fact that the Hilbert-Poincar\'e series eventually agrees with the Euler characteristic for large enough and positive exponents $m_i$ is well-known. However, the proposed generating function covers all line bundles on $X$ that can be obtained as restrictions of line bundles from the ambient variety. Embeddings for which the entire Picard group of $X$ descends from the ambient variety are called favourable and in such cases the generating function~\eqref{eq:GFeuler} covers the entire Picard group. Note that the conjecture makes no assumption on the type of the variety (Fano, Calabi-Yau or general), nor on its dimension. The conjecture can easily be extended to complete intersections in toric varieties as discussed in Section~\ref{sec:EulerChar}. 

For individual cohomology dimensions, the results proposed here refer to specific examples or classes of examples. 

\begin{nndefn}
Given a complex projective variety $X$ with Picard number $r$, we define the {\itshape cohomology series} 
\begin{equation}
CS^i(X,\cO_X) = \sum_{m_i\in \mathbb Z}  h^i(X,\cO_{X}(m_1H_1+\ldots m_r H_r))\, t_1^{m_1}\ldots t_r^{m_r}~,
\end{equation}
for $0\leq i\leq {\rm dim}(X)$, relative to a $\mathbb Z$-basis $\{H_1,H_2,\ldots H_r\}$ of $\text{Pic}(X)$.
\end{nndefn}

{\bfseries A simple example in Picard number one.}
For simplicity, the examples discussed in this paper have low Picard number (up to $4$). To start with a very simple class of varieties in Picard number one, which includes the quintic Calabi-Yau threefold, consider a smooth generic hypersurface $X\subset \IP^{n\geq 4}$ of degree $d$. For $n \geq 4$, the Lefschetz hyperplane theorem ensures that Pic$(X)\cong \mathbb{Z}$. Let $H=\cO_{\IP^n}(1)|_X$. The Hilbert-Poincar\'e series associated with the coordinate ring of $X$ encodes the zeroth cohomology dimensions for all line bundles on $X$:
\begin{equation}\label{eq:coh0Pnd_intro}
HS(X=\IP^n[d],t) = \frac{1-t^d}{(1-t)^{n+1}} = \sum_{m\in \mathbb Z}  h^0(X,\cO_{X}(mH))\, t^m~
\end{equation}
and we write this as
\begin{equation}
CS^0(X,\cO_X) = \left( \frac{1-t^d}{(1-t)^{n+1}}~, \begin{array}{c} t\\0 \end{array} \right) = \sum_{m\in \mathbb Z}  h^0(X,\cO_{X}(mH))\, t^m~,
\end{equation}
where $CS^0(X,\cO_X)$ stands for the zeroth cohomology series associated with the structure sheaf of~$X$ and the expansion is taken around $t=0$.
Eq.~\eqref{eq:coh0Pnd_intro} follows from the additivity of the Hilbert-Poincar\'e series relative to the Koszul resolution
\begin{equation}
0\rightarrow \cO_{\IP^n}(-d)\rightarrow \cO_{\IP^n} \rightarrow \cO_X\rightarrow 0~,
\end{equation}
from which 
$t^d HS(\IP^n,t) - HS(\IP^n,t) + HS(\IP^n[d],t) =0$. The same sequence implies that the only other non-vanishing line bundle cohomology is the top one, since  $h^i(\IP^n, L)=0$ for all $0<i<n$ and any line bundle $L$. Moreover, the information about the top line bundle cohomology is also encoded in the Hilbert-Poincar\'e series expanded around infinity, a statement which follows from the Serre duality relation 
\begin{equation}
h^0(X,\cO_X(mH))=h^{n-1} (X,\cO_X(-mH)\otimes\cO_X((d{-}n{-}1)H))~.
\end{equation} 
Formally, obtaining the top cohomology series involves replacing $t\rightarrow t^{-1}$ in $HS(X,t)$, followed by the expansion around $\{t=0\}$ and a second replacement $t\rightarrow t^{-1}$:
\begin{equation}
CS^{n-1}(X,\cO_X) = \left( \frac{1-t^d}{(1-t)^{n+1}}~, \begin{array}{c} t\\\infty \end{array} \right) = \sum_{m\in \mathbb Z}  h^{n-1}(X,\cO_{X}(mH))\, t^m~.
\end{equation}

While this example is straightforward, it reveals a recurring pattern that runs throughout the paper, namely the existence of a `universal' generating function (here the Hilbert-Poincar\'e series itself) encoding the dimensions of all cohomologies of all line bundles in the Picard group. The examples discussed in the rest of the paper display a variety of contexts in which such `universal' generating functions have been identified. 

{\bfseries Hirzebruch surfaces.}
In dimension two, we prove the existence of a universal generating function for the cohomology of line bundles on Hirzebruch surfaces using the Cech cohomology tools developed in Refs.~\cite{CohomOfLineBundles:Algorithm, CohomOfLineBundles:Proof, schenck2010euler, Jow:2011, Blumenhagen:2011xn}, as discussed in Section~\ref{sec:Hirzebruch}. Unfortunately, the proof does not immediately generalise to other cases (beyond toric examples), as tracking Cech cohomology representatives through spectral sequences is difficult.

\begin{theorem}\label{thm:HirzebruchS}
Let $\mathbb F_n$ be the $n$-th Hirzebruch surface $\IP(\cO_{\IP^1}\oplus \cO_{\IP^1}(n))$
with Picard group  ${\rm Pic}({\mathbb F_n}) = \mathbb Z C\oplus \mathbb Z F$, where $C$ is the unique irreducible curve with negative self intersection $C^2=-n$ and $F$ corresponds to the fiber with $F^2=0$, $F\cdot C = 1$. The nef cone is ${\rm Nef}({\mathbb F_n})=\IR_{\geq 0} F +\IR_{\geq 0} (nF+C)$ and the effective cone is ${\rm Eff}({\mathbb F_n})=\IR_{\geq 0} F +\IR_{\geq 0} C~.$
The cohomology dimensions of all line bundles in ${\rm Pic}({\mathbb F_n})$ are encoded by the following cohomology series relative to the basis $\{C,F\}$:
\begin{equation}
\begin{aligned}
CS^0(\mathbb F_n,\mathcal O_{\mathbb F_n})&{=} \left(\frac{1}{(1-t_1 t_2^n)(1-t_2)^2(1-t_1)},\begin{array}{cc} t_1& t_2 \\0&0 \end{array}\right)\\
CS^1(\mathbb F_n,\mathcal O_{\mathbb F_n})&{=} \left(\frac{1}{(1-t_1 t_2^n)(1-t_2)^2(1-t_1)},\begin{array}{cc} t_1& t_2 \\ \infty&0 \end{array}\right) +\left(\frac{1}{(1-t_1 t_2^n)(1-t_2)^2(1-t_1)},\begin{array}{cc} t_1& t_2 \\0& \infty\end{array}\right)\\
CS^2(\mathbb F_n,\mathcal O_{\mathbb F_n})&{=} \left(\frac{1}{(1-t_1 t_2^n)(1-t_2)^2(1-t_1)},\begin{array}{cc} t_1& t_2 \\\infty&\infty \end{array}\right)  
\end{aligned}
\end{equation}
\end{theorem}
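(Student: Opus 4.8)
The plan is to read the rational function as the Hilbert series of the Cox ring and to compute all three cohomologies as graded local cohomology of that ring. Writing $S=\mathbb{C}[x_1,x_2,x_3,x_4]$ for the Cox ring of $\mathbb{F}_n$ with $\deg x_1 = C$, $\deg x_3 = C+nF$ and $\deg x_2=\deg x_4 = F$, the given denominator $(1-t_1t_2^n)(1-t_2)^2(1-t_1)$ is exactly $\prod_\rho(1-t^{\deg x_\rho})$, so the series is $\sum_D \dim S_D\, t^D$. The irrelevant ideal is $B=(x_1,x_3)(x_2,x_4)$, and for the smooth complete toric surface $\mathbb{F}_n$ one has the standard identifications $H^0(\mathbb{F}_n,\cO(D)) = S_D$ and $H^i(\mathbb{F}_n,\cO(D)) \cong H^{i+1}_B(S)_D$ for $i\geq 1$. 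Thus it suffices to compute $H^\bullet_B(S)$ as a graded module and to match the Hilbert series of each piece with a Laurent expansion of the given series around a corner of $\{0,\infty\}^{\times 2}$.

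The key point is that $B$ has only the two minimal non-faces $\{x_1,x_3\}$ and $\{x_2,x_4\}$, so $H^\bullet_B$ is governed by a single Mayer--Vietoris step. Setting $I=(x_1,x_3)$, $J=(x_2,x_4)$, so that $I+J=\mathfrak{m}$ and $\sqrt{IJ}=\sqrt{I\cap J}$, the sequence
\[
\cdots \to H^i_{\mathfrak{m}}(S)\to H^i_I(S)\oplus H^i_J(S)\to H^i_B(S)\to H^{i+1}_{\mathfrak{m}}(S)\to\cdots
\]
together with the fact that each of $I,J$ is a complete intersection (local cohomology concentrated in degree $2$) and that $H^\bullet_{\mathfrak{m}}(S)$ is concentrated in degree $4$, collapses to $H^2_B(S)\cong H^2_I(S)\oplus H^2_J(S)$, $H^3_B(S)\cong H^4_{\mathfrak{m}}(S)$, $H^0_B(S)=H^1_B(S)=0$, and $H^i_B(S)=0$ otherwise. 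Hence $H^0$ comes from $S$ itself, $H^1=H^2_B$ from the two complete-intersection pieces, and $H^2=H^3_B$ from the top $\mathfrak{m}$-local cohomology.

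It then remains to identify Hilbert series. For a regular sequence of variables, $H^c_{(x_{i_1},\dots,x_{i_c})}(S)$ is spanned by the monomials whose exponents are strictly negative on $x_{i_1},\dots,x_{i_c}$ and non-negative on the remaining variables; its Hilbert series is therefore obtained from $\prod_\rho(1-t^{\deg x_\rho})^{-1}$ by expanding the factors attached to $x_{i_1},\dots,x_{i_c}$ around $\infty$ and the rest around $0$. Applying this: $H^2_I$ expands the $x_1,x_3$ factors $(1-t_1)^{-1}$ and $(1-t_1t_2^n)^{-1}$ at $\infty$ and keeps $(1-t_2)^{-2}$ at $0$, giving the $(\infty,0)$ corner; $H^2_J$ expands $(1-t_2)^{-2}$ at $\infty$ and keeps the $t_1$-factors at $0$, giving the $(0,\infty)$ corner; $H^4_{\mathfrak{m}}$ expands all factors at $\infty$, giving $(\infty,\infty)$; and $S$ gives $(0,0)$. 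This reproduces the three cohomology series verbatim. As an independent cross-check I would run the projection $\pi:\mathbb{F}_n=\IP(\cO_{\IP^1}\oplus\cO_{\IP^1}(n))\to\IP^1$: with $\xi=C+nF$ one has $\pi_*\cO(m_1\xi)=\bigoplus_{k=0}^{m_1}\cO(kn)$ and $R^1\pi_*\cO(m_1\xi)=\bigoplus_{k=1}^{-m_1-1}\cO(-kn)$, and the degenerate Leray sequence yields closed piecewise-polynomial formulas for $h^0,h^1,h^2$; one verifies directly that extracting the coefficient of $t_1^{m_1}t_2^{m_2}$ from each corner matches these formulas, the two $h^1$ pieces being supported on the disjoint half-planes $m_1\geq 0$ and $m_1\leq -2$ exactly as the two Mayer--Vietoris summands predict.

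The main obstacle is the identification in the last step, namely pinning down, at the level of Čech representatives, that expanding precisely the factors indexed by a minimal non-face computes the corresponding local cohomology summand. For $\mathbb{F}_n$ this is painless because there are only two minimal non-faces and each is a complete intersection, so the Čech/Mayer--Vietoris computation is a single step with no higher differentials and the surviving monomials are read off immediately. It is exactly this bookkeeping of surviving representatives through the Čech spectral sequence that becomes delicate once the combinatorics of the irrelevant ideal is richer, which is why the argument, though clean here, does not extend beyond the toric setting.
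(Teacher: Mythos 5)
Your argument is correct and lands on the same combinatorial core as the paper's proof: in both cases the three cohomologies are identified with sets of Laurent monomials in the Cox variables whose exponent sign pattern is dictated by the two minimal non-faces $\{z_1,z_3\}$ and $\{z_2,z_4\}$, and the generating function for each set is recognised as the expansion of $\prod_\rho(1-t^{\deg z_\rho})^{-1}$ around the corresponding corner of $\{0,\infty\}^{\times 2}$. The difference is in how that classification of representatives is justified: the paper simply cites the \texttt{cohomCalg} algorithm of Blumenhagen--Jurke--Rahn--Roschy for the list of ``rationoms'', whereas you derive it from scratch via $H^i(\mathbb F_n,\cO(D))\cong H^{i+1}_B(S)_D$ and the Mayer--Vietoris sequence for $I=(x_1,x_3)$, $J=(x_2,x_4)$, $I+J=\mathfrak m$. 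Your route is more self-contained, and it makes structurally transparent why $CS^1$ splits into exactly two corner expansions (the two complete-intersection summands $H^2_I\oplus H^2_J$) supported on disjoint half-planes. Two small points you gloss over, neither of which affects correctness here: expanding a factor $(1-t^a)^{-1}$ at $\infty$ yields $-\sum_{k\ge1}t^{-ka}$, so the monomial count is matched only up to a sign $(-1)^c$ where $c$ is the number of factors sent to $\infty$ --- here $c\in\{2,4\}$ is always even, so the signs in the theorem come out right, but this should be said; and the theorem's iterated expansion is order-sensitive because of the mixed factor $(1-t_1t_2^n)^{-1}$ (the $t_1$-expansion must be performed first for the $(\infty,0)$ corner), a point the paper's proof addresses explicitly and yours should too when matching your factor-by-factor expansion to the stated convention.
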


Note that the middle cohomology generating function has two contributions, since the first cohomology is non-vanishing within the two disconnected cones $-2C+\IR_{\geq -1} F +\IR_{\leq 0}(C+F)$ and $\IR_{\leq -2}F + \IR_{\geq 0}(C+F)$.  

{\bfseries Hypersurfaces in $\IP^1\times \IP^n$.}
Moving up in dimension, we propose the following. 

\begin{conjecture}\label{con:coh_intro1}
Let $X$ be a general hypersurface of bi-degree $(d,e)$ in $\IP^1\times \IP^{n\geq 3}$ with $d\leq n$ and $e$ arbitrary or $d$ arbitrary and $e=1$. Denote $H_1 = \mathcal O_{\IP^1\times \IP^n}(1,0)|_X$ and $H_2 = \mathcal O_{\IP^1\times \IP^n}(0,1)|_X$. Then in the basis $\{H_1, H_2\}$,
\begin{equation}\label{eq:hypersurfaces_intro}
\begin{aligned}
CS^0(X,\mathcal O_X)&{=}\left( \frac{(1{-}t_2^{e})^{d+1}}{(1{-}t_1)^2(1{-}t_2)^{n+1}(1{-}t_1^{-1}t_2^{e})^d}\,,\!\begin{array}{cc} t_2& t_1 \\0&0 \end{array}\!\!\right) {=}\!\!\!\! \sum_{m_1,m_2\in \mathbb Z} \!\! h^0(X,\cO_X(m_1 H_1{+}m_2 H_2)) t_1^{m_1}t_2^{m_2}\\
CS^1(X,\mathcal O_X)&{=}\left( \frac{(1{-}t_2^{e})^{d+1}}{(1{-}t_1)^2(1{-}t_2)^{n+1}(1{-}t_1^{-1}t_2^{e})^d}\,,\!\begin{array}{cc} t_2& t_1 \\0&\infty \end{array}\!\!\right) {=}\!\!\!\! \sum_{m_1,m_2\in \mathbb Z} \!\! h^1(X,\cO_X(m_1 H_1{+}m_2 H_2)) t_1^{m_1}t_2^{m_2}\\
({-}1)^nCS^{n{-}1}(X{,}\mathcal O_X)&{=}\left( \frac{(1{-}t_2^{e})^{d{+}1}}{(1{-}t_1)^2(1{-}t_2)^{n+1}(1{-}t_1^{-1}t_2^{e})^d}\,,\!\begin{array}{cc} t_2& t_1 \\\infty&0 \end{array}\!\!\right) {=}\!\!\!\! \sum_{m_1,m_2\in \mathbb Z} \!\! h^{n-1}(X,\cO_X(m_1 H_1{+}m_2 H_2)) t_1^{m_1}t_2^{m_2}\\
(-1)^nCS^n(X,\mathcal O_X)&{=}\left( \frac{(1{-}t_2^{e})^{d+1}}{(1{-}t_1)^2(1{-}t_2)^{n+1}(1{-}t_1^{-1}t_2^{e})^d}\,,\!\begin{array}{cc} t_2& t_1 \\ \infty&\infty \end{array}\!\!\right) {=}\!\!\!\! \sum_{m_1,m_2\in \mathbb Z} \!\! h^n(X,\cO_X(m_1 H_1{+}m_2 H_2)) t_1^{m_1}t_2^{m_2}\\
\end{aligned}
\end{equation}
and all intermediate line bundle cohomologies vanish. 
\end{conjecture}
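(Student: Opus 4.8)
The plan is to compute each $h^i(X,\cO_X(a,b))$ (writing $a=m_1$, $b=m_2$) from the twisted Koszul resolution $0\to\cO_A(a{-}d,b{-}e)\to\cO_A(a,b)\to\cO_X(a,b)\to 0$ of $X$ in $A=\IP^1\times\IP^n$, combined with Künneth, $H^k(A,\cO_A(a,b))=\bigoplus_{p+q=k}H^p(\IP^1,\cO(a))\otimes H^q(\IP^n,\cO(b))$. Since $H^p(\IP^1,\cdot)$ is supported in $p\in\{0,1\}$ and $H^q(\IP^n,\cdot)$ in $q\in\{0,n\}$, the ambient cohomology lives only in degrees $k\in\{0,1,n,n+1\}$; as $n\geq 3$, the long exact sequence then forces $h^i(X,\cO_X(a,b))=0$ for $2\leq i\leq n-2$, which is the asserted vanishing of all intermediate cohomologies. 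The four surviving degrees $i\in\{0,1,n-1,n\}$ match the four Künneth corners $\{0,1\}\times\{0,n\}$, which are exactly the four expansion regimes of $F$: expanding $\tfrac1{(1-t_1)^2}$ about $t_1=0$ (resp.\ $\infty$) reproduces the generating series of $h^0(\IP^1,\cdot)$ (resp.\ $h^1(\IP^1,\cdot)$), and $\tfrac1{(1-t_2)^{n+1}}$ about $t_2=0$ (resp.\ $\infty$) reproduces that of $h^0(\IP^n,\cdot)$ (resp.\ $(-1)^{n+1}h^n(\IP^n,\cdot)$); these signs are the source of the overall $(-1)^n$ and explain the toggling of expansion points.

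Before computing I would halve the work using Serre duality on $X$. Since $K_X=\cO_X(d{-}2,e{-}n{-}1)$, one has $h^{n}(X,\cO_X(a,b))=h^0(X,\cO_X(d{-}2{-}a,e{-}n{-}1{-}b))$ and $h^{n-1}=h^1$ under the same shift, so $CS^{n-1},CS^{n}$ are obtained from $CS^1,CS^0$ by $(t_1,t_2)\mapsto(t_1^{-1},t_2^{-1})$ times the monomial $t_1^{d-2}t_2^{e-n-1}$. A direct rational-function check shows $F$ satisfies the matching functional equation $F(t_1^{-1},t_2^{-1})=(-1)^n\,t_1^{2-d}t_2^{n+1-e}\,F(t_1,t_2)$; together with the interchange of expansion regions $0\leftrightarrow\infty$ this turns the $CS^{n-1},CS^n$ identities (including the factor $(-1)^n$) into consequences of the $CS^0,CS^1$ identities. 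Hence it suffices to prove the first two lines of \eqref{eq:hypersurfaces_intro}.

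For those, writing $H^i(A,(c,f))$ for $H^i(A,\cO_A(c,f))$ and $H^i(X)$ for $H^i(X,\cO_X(a,b))$, the twisted Koszul sequence yields the six-term exact sequence
\[0\to H^0(A,(a{-}d,b{-}e))\to H^0(A,(a,b))\to H^0(X)\to H^1(A,(a{-}d,b{-}e))\xrightarrow{\ \mu_P\ } H^1(A,(a,b))\to H^1(X)\to 0,\]
where $\mu_P$ is multiplication by the defining section. Thus $h^0(X)=\bigl[h^0(A,(a,b))-h^0(A,(a{-}d,b{-}e))\bigr]+\dim\ker\mu_P$ and $h^1(X)=\dim\operatorname{coker}\mu_P$. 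The bracket is the Hilbert function of the coordinate ring, whose series (about $t_2=0$ then $t_1=0$) is $HS(X)=(1-t_1^dt_2^e)/\bigl((1-t_1)^2(1-t_2)^{n+1}\bigr)$, so the whole problem collapses to determining, as a function of $(a,b)$, the rank of the single map $\mu_P\colon H^1(\IP^1,\cO(a{-}d))\otimes H^0(\IP^n,\cO(b{-}e))\to H^1(\IP^1,\cO(a))\otimes H^0(\IP^n,\cO(b))$, and checking that $\dim\ker\mu_P$ is the $(t_2,t_1)=(0,0)$ expansion of $F-HS(X)$ while $\dim\operatorname{coker}\mu_P$ is the expansion of $F$ about $t_2=0$, $t_1=\infty$. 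Under Künneth, $\mu_P$ splits according to $P=\sum_{i+j=d}x_0^ix_1^j\,Q_{ij}(y)$: the $\IP^1$-factor acts by a \emph{truncated} shift (a \v Cech monomial is shifted by $x_0^ix_1^j$ and killed once an exponent becomes non-negative), and the $\IP^n$-factor by multiplication by the degree-$e$ form $Q_{ij}$.

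The main obstacle is this rank computation, and it is exactly where the two hypotheses enter. When $d\leq n$, the $d+1$ generic forms $Q_{ij}(y)$ form a regular sequence on $\IP^n$ (possible precisely because $d+1\leq n+1$), so I would resolve $\operatorname{coker}\mu_P$ by the Koszul complex on $(Q_{ij})$ and read off the ranks from its Euler characteristic, the regularity ensuring the vanishing of all higher Koszul homology in the relevant range. When $e=1$, $P=\sum_j y_jA_j(x)$ is linear in $y$, so the projection $X\to\IP^1$ is a $\IP^{n-1}$-bundle $\IP(\mathcal E)$ with $\mathcal E=\ker\bigl(\cO_{\IP^1}^{\,n+1}\to\cO_{\IP^1}(d)\bigr)$; by Grothendieck's theorem $\mathcal E$ splits into line bundles (balanced for generic $A_j$), and pushing $\cO_X(a,b)$ forward along $X\to\IP^1$ reduces every cohomology to that of an explicit direct sum of line bundles on $\IP^1$. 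In both regimes the delicate point is the truncation in the $\IP^1$ $H^1$-factor: it is this truncation, rather than a naive generic-cancellation factor $(1-t_1^dt_2^e)$, that produces the asymmetric $(1-t_2^e)^{d+1}/(1-t_1^{-1}t_2^e)^d$ in $F$. The closing step is routine bookkeeping: expand $F$ in each prescribed regime and verify term-by-term, using the $\IP^1$- and $\IP^n$-series identities of the first paragraph, that the coefficients coincide with the kernel and cokernel dimensions just computed.
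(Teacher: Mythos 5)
This statement is a conjecture in the paper, and the paper does not prove it: Section~\ref{hypersurfacesP1Pn} derives the $CS^0$ line rigorously from Ottem's explicit presentation of the Cox ring (a complete intersection ring whose Hilbert series is exactly the rational function $F$), establishes the vanishing of the intermediate cohomologies from the Koszul sequence plus K\"unneth (the same argument you give), and leaves the $CS^1$, $CS^{n-1}$, $CS^n$ lines as conjectures supported by computer calculations. Your route is genuinely different for $CS^0$ (six-term exact sequence rather than Cox ring), and your Serre-duality reduction is correct and worthwhile: the functional equation $F(t_1^{-1},t_2^{-1})=(-1)^n t_1^{2-d}t_2^{n+1-e}F(t_1,t_2)$ does hold, $K_X=(d-2)H_1+(e-n-1)H_2$, and this legitimately reduces the four lines to the first two. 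If completed, your argument would prove the conjecture, which is more than the paper does.

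The gap is the central rank computation, which you name as ``the main obstacle'' but do not carry out. Under K\"unneth and the \v{C}ech basis of $H^1(\IP^1,\cdot)$, the map $\mu_P$ is not multiplication by an ideal: it is a banded, Toeplitz-like matrix of size roughly $(-a-1)\times(d-a-1)$ whose entries are the generic degree-$e$ forms $Q_{ij}$, composed with the truncation you describe. The Koszul complex on the regular sequence $(Q_{ij})_{i+j=d}$ resolves $\mathbb{C}[y]/(Q_{ij})$, not the kernel or cokernel of this particular matrix, so ``read off the ranks from its Euler characteristic'' does not follow; relating the two requires introducing auxiliary generators and syzygies, which is essentially the content of Ottem's presentation $\mathrm{Cox}(X)=\mathbb{C}[x,y,z]/(f_0+x_1z_1,\,f_1-x_0z_1+x_1z_2,\ldots,f_d-x_0z_d)$ that you are trying to avoid. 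Moreover, the final identification of $\dim\ker\mu_P$ with the $(t_2,t_1)=(0,0)$ expansion of $F-HS(X)$ and of $\dim\operatorname{coker}\mu_P$ with the $(0,\infty)$ expansion of $F$ is deferred to ``routine bookkeeping,'' but it is precisely the quantitative content of the $CS^0$ and $CS^1$ claims and is nowhere verified. As written, the proposal is a credible plan whose hardest step remains open; to close it you would either need to compute the generic rank of the truncated Toeplitz matrix of forms directly, or concede the $CS^0$ line to Ottem's Cox-ring presentation and isolate what new input is needed for $\operatorname{coker}\mu_P$.
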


The conjecture is discussed in Section~\ref{hypersurfacesP1Pn} together with a similar statement in dimension two which includes 
general hypersurfaces of bidegree $(2,e\geq 3)$ in $\IP^1\times \IP^2$, see Section~\ref{hypersurfacesP1P2}. For the bi-degrees considered in Conjecture~\ref{con:coh_intro1}, the rank of the Picard group is $2$ and the generating functions cover the entire Picard group. The generating function for the zeroth line bundle cohomology dimensions is constructed as the Hilbert-Poincar\'e series associated with the Cox ring of $X$, an explicit presentation of which was given by Ottem in Ref.~\cite{Ottem2015}. The generating functions for the higher cohomologies are conjectural. 
Note that this class of hypersurfaces includes varieties of Fano, Calabi-Yau and general type of arbitrary dimension greater or equal to~two. 

{\bfseries Relation to the birational structure.}
Without making any general statements, we consider in more detail the case $d=2, e=4$ and $n=3$ which corresponds to a general Calabi-Yau hypersurface $X$ in $\IP^1\times \IP^3$, as discussed in Section~\ref{sec:7887general}. An alternative method for constructing the generating function~\eqref{eq:hypersurfaces_intro} exploits directly the birational structure of the variety. Concretely, the zeroth cohomology generating function 
\begin{equation}\label{cs07887_intro}
CS^0(X,\mathcal O_X){=}\left( \frac{(1{-}t_2^{4})^{3}}{(1{-}t_1)^2(1{-}t_2)^{4}(1{-}t_1^{-1}t_2^{4})^2}\,,\!\begin{array}{cc} t_2& t_1 \\0&0 \end{array}\!\!\right) 
\end{equation}
can be decomposed into two contributions associated with the two Mori chambers of~$X$ and an additional correction term associated with the wall separating the two Mori chambers. A general hypersurface of bi-degree $(2,4)$ in $\IP^1\times \IP^3$ admits two birational models related by a flop, which happen to be diffeomorphic to each other. The effective cone is shown in \fref{fig:X7887generic_intro} and is equal to the movable cone, consisting of the union of the nef cones associated with the two diffeomorphic birational models: 
\begin{equation}
{\rm Eff}(X)=\left( \IR_{\ge 0}H_1+\IR_{\ge 0}H_2\right) \cup \left( \IR_{\ge 0}H_2+\IR_{\ge 0}(4H_2-H_1)\right)~.
\end{equation}
On the other hand, the region where the first line bundle cohomology is non-vanishing, also shown in \fref{fig:X7887generic_intro}, corresponds to the cone $-2H_1 + \IR_{\geq 0} (-H_1) + \IR_{\geq 0} H_2$.

\begin{center}
\begin{figure}[h]
\begin{center}
\includegraphics[width=7.2cm]{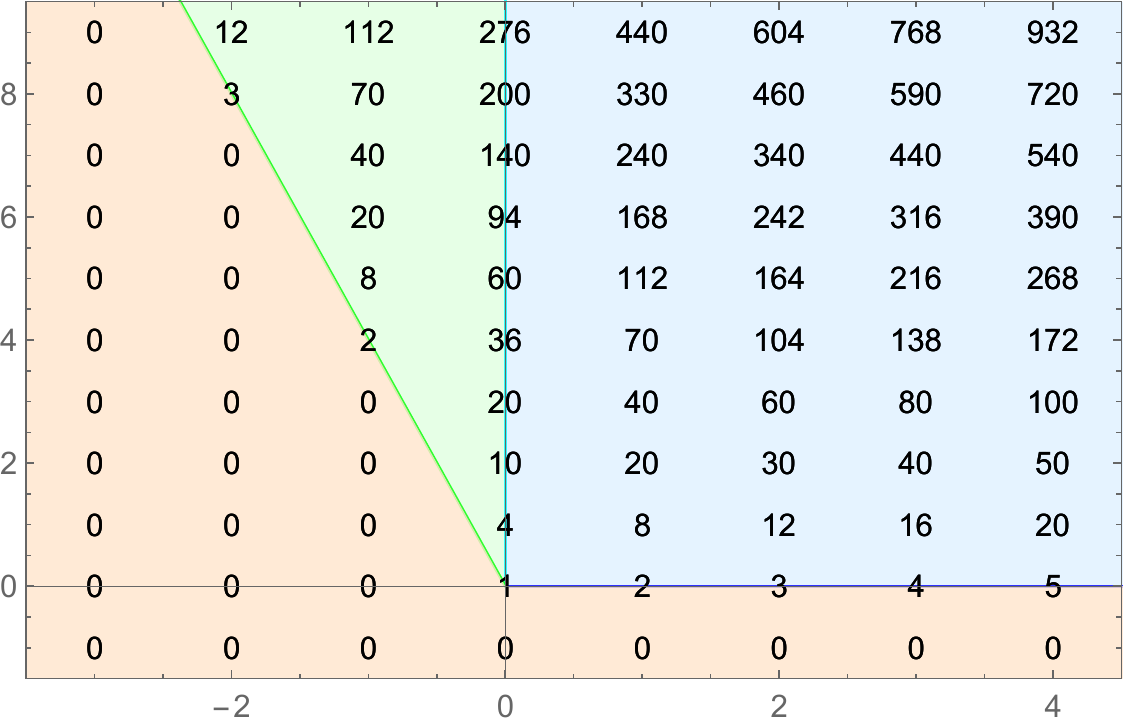}~~~~~
\includegraphics[width=7.2cm]{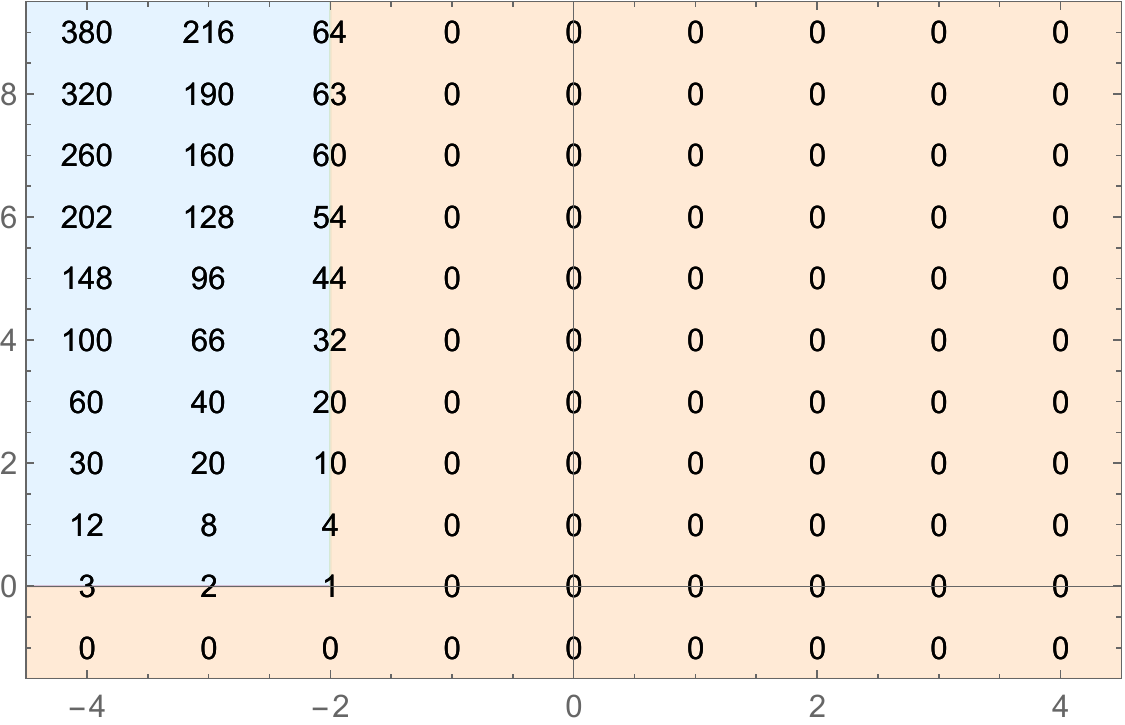}
\caption{\itshape Zeroth and first line bundle cohomology data (left and, respectively, right plot) for a generic hypersurface of bi-degree $(2,4)$ in $\mathbb{P}^1{\times}\mathbb{P}^3$. The numbers indicate cohomology dimensions, while their locations in the plot indicate the first Chern class of the corresponding line bundles.}
\label{fig:X7887generic_intro} 
\end{center}
\end{figure}
\end{center}
The generating function \eqref{cs07887_intro} can then be obtained by adding up the Hilbert-Poincar\'e series associated with the coordinate rings of the two birational models of $X$ and subtracting a correction term: 
\begin{equation}\label{eq:cs07887v2}
\begin{aligned}
CS^0\left(X,\mathcal O_X \right)&= HS(X, t_1, t_2) + HS(X, t_1^{-1}t_2^4, t_2) - {\rm corr.~term}\\
& \!\!\!\!\!\!\!\!\!\!\!\!\!\!\!\!= \left( \frac{1-t_1^2 t_2^4}{(1-t_1)^2 (1-t_2)^4} ,\begin{array}{cc} t_2& t_1 \\0&0 \end{array}\right) + \left( \frac{1-\left(t_1^{-1}t_2^4\right)^2 t_2^4}{(1-t_1^{-1}t_2^4)^2 (1-t_2)^4} ,\begin{array}{cc} t_2& t_1 \\0&0 \end{array}\right) - \left( \frac{1 - t_2^4}{(1-t_2)^4} ,\begin{array}{c} t_2 \\0\end{array}\right)
\end{aligned}
\end{equation}

The correction term is such that
\begin{equation}
\left.\frac{1-t_1^2 t_2^4}{(1-t_1)^2 (1-t_2)^4}\right|_{t_1= 0} \!\!\!\!+~ \left.\frac{1-\left(t_1^{-1}t_2^4\right)^2 t_2^4}{(1-t_1^{-1}t_2^4)^2 (1-t_2)^4}\right|_{t_1= \infty}\!\!\!\! -~ \frac{1 - t_2^4}{(1-t_2)^4} ~=~  \frac{1 - t_2^8}{(1-t_2^4)(1-t_2)^4}~,
\end{equation}
gives the Hilber-Poincar\'e series $HS(\IP_{[4:1:1:1:1]}[8],t_2)$ associated with the singular threefold involved in the flop (see Ref.~\cite{Brodie:2021toe} for more details about flops for general complete intersections in products of projective spaces).

{\bfseries Complex structure dependence.} Since line bundle cohomology is not constant in families, it is natural to ask about the complex structure dependence of the proposed generating functions. Again, without making any general statements, we consider the case of Calabi-Yau hypersurfaces $X$ of bi-degree $(2,4)$ in $\IP^1\times \IP^3$ and study the way in which the generating functions change along a particular special locus in the complex structure moduli space. Let $f = x_0^2 f_0+x_0x_1 f_1+x_1^2 f_2$ be the defining polynomial for $X$, where $[x_0,x_1]$ are homogeneous coordinates on $\IP^1$ and $f_0,f_1,f_2$ are homogeneous polynomials of degree $4$ in the $\IP^3$ coordinates, denoted generically by $y$. For generic points $y\in\IP^3$, the equation $f(x,y)=0$ admits two points in $\IP^1$ as solutions. However, when $f_0(y)=f_1(y)=f_2(y)=0$, which for general $f_0$, $f_1$ and $f_2$ happens at 64 isolated points, the equation admits an entire $\IP^1$ as a solution. These isolated genus 0 curves correspond to the flopping curves of $X$. 

The situation is radically different when $f_1=0$ and $f_0,f_2$ remain general. While this choice still leads to smooth hypersurfaces, the change in the structure of the zeroth line bundle cohomology is drastic, as shown in \fref{fig:X7887tuned1_intro}. This is to be expected, as now the locus $f_0(y)=f_2(y)=0$ defines a curve of genus $33$ in $\IP^3$, which means that the above $64$ isolated collapsing curves have now coalesced into a surface, a $\IP^1$-fibration over a genus 33 curve, the flop being replaced by an elementary transformation~\cite{Katz:1996ht}. The collapsing divisor is rigid and its class is given by $\Gamma=-2H_1+4H_2$, where $H_1 = \mathcal O_{\IP^1\times \IP^3}(1,0)|_X$ and $H_2 = \mathcal O_{\IP^1\times \IP^3}(0,1)|_X$.

\begin{center}
\begin{figure}[H]
\begin{center}
\includegraphics[width=7.1cm]{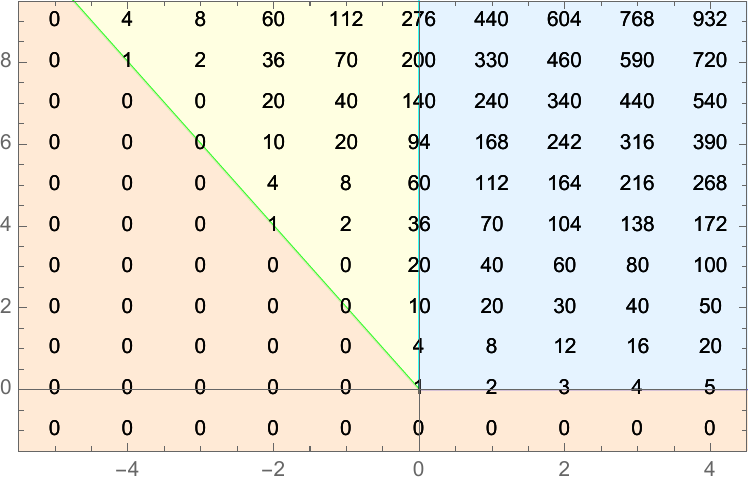}~~~~~
\includegraphics[width=7.2cm]{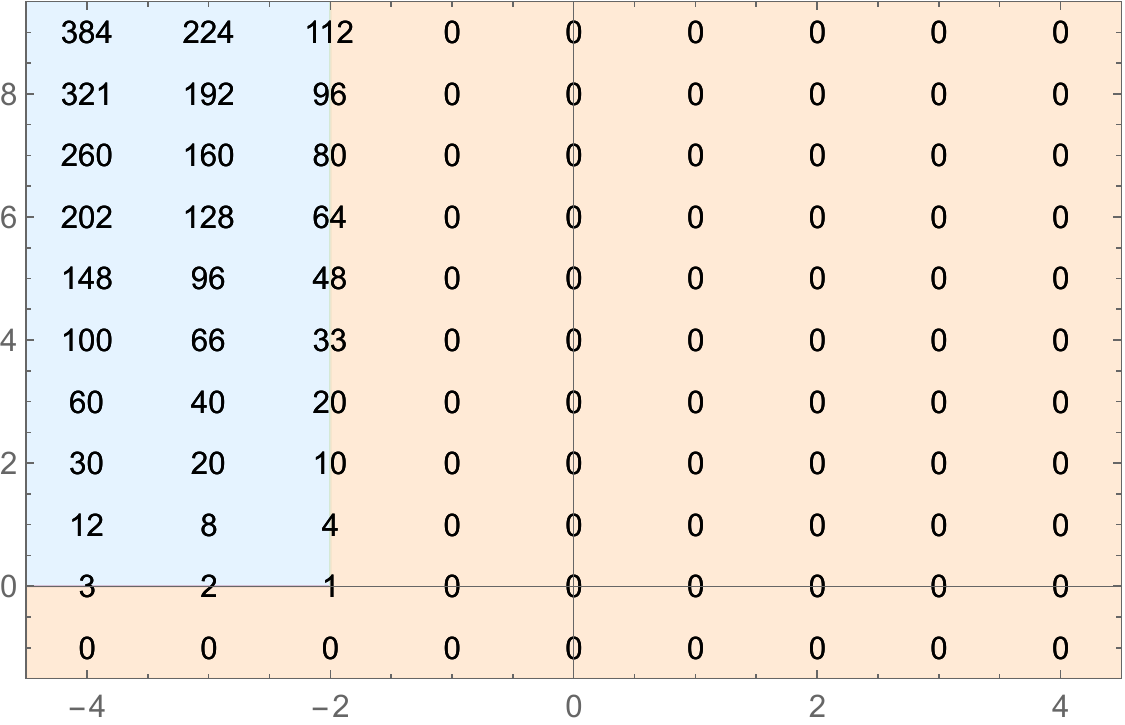}
\caption{\itshape Zeroth and first line bundle cohomology data (left and, respectively, right plot) for a Calabi-Yau hypersurface of bi-degree $(2,4)$ in $\mathbb{P}^1\times\mathbb{P}^3$ defined by the vanishing of $f = x_0^2 f_0+x_1^2 f_2$, with $f_0$ and $f_2$ general. 
}
\label{fig:X7887tuned1_intro}
\end{center}
\end{figure}
\end{center}

\begin{conjecture}\label{con:7887tuned_intro}
Let $X$ be a smooth hypersurface in $\IP^1\times \IP^3$ defined as the zero locus of a homogeneous polynomial $f = x_0^2 f_0+x_1^2 f_2$ where $[x_0,x_1]$ are homogeneous coordinates on $\IP^1$ and $f_0,f_2$ are general homogeneous polynomials of degree $4$ in the $\IP^3$ coordinates. 
A generating function for all line bundle cohomology dimensions relative to the basis $\{H_1,H_2\}$, where $H_1 = \mathcal O_{\IP^1\times \IP^3}(1,0)|_X$ and $H_2 = \mathcal O_{\IP^1\times \IP^3}(0,1)|_X$, is given by
\begin{equation}
\begin{aligned}
CS^0(X,\mathcal O_X) &= \left( \frac{(1-t_2^4)^2}{(1-t_1)^2(1-t_2)^4(1-t_1^{-2}t_2^4)} ,\begin{array}{cc} t_2& t_1 \\0&0 \end{array}\right)  \\
CS^1(X,\mathcal O_X) &= \left( \frac{(1-t_2^4)^2}{(1-t_1)^2(1-t_2)^4(1-t_1^{-2}t_2^4)} ,\begin{array}{cc} t_2& t_1 \\0&\infty \end{array}\right)  \\
-CS^2(X,\mathcal O_X) &= \left( \frac{(1-t_2^4)^2}{(1-t_1)^2(1-t_2)^4(1-t_1^{-2}t_2^4)} ,\begin{array}{cc} t_2& t_1 \\ \infty&0 \end{array}\right)  \\
-CS^3(X,\mathcal O_X) &= \left( \frac{(1-t_2^4)^2}{(1-t_1)^2(1-t_2)^4(1-t_1^{-2}t_2^4)} ,\begin{array}{cc} t_2& t_1 \\ \infty&\infty \end{array}\right) ~.
\end{aligned}
\end{equation}
\end{conjecture}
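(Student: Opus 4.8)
The plan is to compute every $h^i(X,\cO_X(m_1H_1+m_2H_2))$ directly from the Koszul resolution of $X$ in $A=\IP^1\times\IP^3$ and to recognise the resulting dimensions as the coefficients of the four expansions of the stated rational function. Twisting $0\to\cO_A(-2,-4)\xrightarrow{\cdot f}\cO_A\to\cO_X\to0$ by $\cO_A(m_1,m_2)$ and taking the long exact sequence reduces each $h^i(X)$ to the kernel and cokernel of multiplication by $f$ on the ambient groups. By Künneth together with the vanishing of intermediate cohomology of line bundles on $\IP^3$, the only nonvanishing ambient groups are $H^0,H^1,H^3,H^4$, each a tensor product of an $\IP^1$-factor $H^{0/1}(\IP^1,\cO(\cdot))$ with a $\IP^3$-factor $V_b:=H^0(\IP^3,\cO(b))$ or its Serre dual; in particular $H^2(A,-)=0$ for all twists. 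The long exact sequence then gives $h^0=\dim\mathrm{coker}(f|_{H^0})+\dim\ker(f|_{H^1})$, $h^1=\dim\mathrm{coker}(f|_{H^1})$, $h^2=\dim\ker(f|_{H^3})$, and $h^3=\dim\mathrm{coker}(f|_{H^3})+\dim\ker(f|_{H^4})$.

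The decisive simplification comes from the tuned form $f=x_0^2f_0+x_1^2f_2$: on the negative-monomial (Čech) basis $x_0^px_1^q$ of $H^1(\IP^1,\cO(k))$, multiplication by $x_0^2$ and $x_1^2$ acts by the shifts $p\mapsto p+2$ and $q\mapsto q+2$, truncated to negative exponents. Hence $f|_{H^1}$ is bidiagonal and, crucially, decouples according to the parity of the $\IP^1$-exponent. Within each parity class the map becomes the two-term map $M_\ell\colon V_{m_2-4}^{\oplus(\ell+1)}\to V_{m_2}^{\oplus\ell}$ with $f_2$ on the diagonal and $f_0$ on the off-diagonal. Because $f_0,f_2$ are general quartics they form a regular sequence cutting out the smooth genus-$33$ curve $C=\{f_0=f_2=0\}$; I would then invoke Hilbert--Burch (equivalently the exactness of the relevant strand of the Koszul/Eagon--Northcott complex) to show that $\ker M_\ell$ is the cyclic module generated by the alternating syzygy $(f_2^\ell,-f_0f_2^{\ell-1},\ldots,(-1)^\ell f_0^\ell)$ — i.e.\ $\ker M_\ell\cong V_{m_2-4-4\ell}$, or the whole source when a parity class has no target index — while $\mathrm{coker}\,M_\ell\cong(R/(f_0,f_2))_{m_2}$ with $R=\IC[y_0,\dots,y_3]$. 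It is here, in excluding unexpected kernel, that genericity of $f_0,f_2$ is essential.

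Assembling these dimensions produces the factorised form $CS^0=H_C(t_2)\cdot\tfrac{1}{(1-t_1)^2}\cdot\tfrac{1}{1-t_1^{-2}t_2^4}$, where $H_C(t_2)=\tfrac{(1-t_2^4)^2}{(1-t_2)^4}$ is the Hilbert series of $R/(f_0,f_2)$ (the curve $C$), the geometric factor $\tfrac{1}{1-t_1^{-2}t_2^4}$ records powers of the section of the rigid divisor $\Gamma=-2H_1+4H_2$, and $\tfrac{1}{(1-t_1)^2}$ bookkeeps the $\IP^1$-multiplicities. Expanding $t_2$ about $0$ and then $t_1$ about $0$ reproduces the full coefficient $\dim\mathrm{coker}(f|_{H^0})+\dim\ker(f|_{H^1})$ extracted above; expanding instead with $t_1$ about $\infty$ converts the kernel count into the cokernel count and yields $CS^1$. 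Finally $CS^2$ and $CS^3$ follow from Serre duality $h^i(X,D)=h^{3-i}(X,-D)$ (using $K_X\cong\cO_X$), which at the level of generating functions is the substitution $t_i\mapsto t_i^{-1}$; matching this against the formal expansions about $t_1=\infty,\,t_2=\infty$ reproduces the stated series together with the Serre-duality sign $(-1)^n=-1$, exactly as in the $\IP^n[d]$ example.

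The main obstacle I anticipate is not any single cohomology computation but the \emph{uniform resummation}: one must control $\ker M_\ell$ and $\mathrm{coker}\,M_\ell$ simultaneously for all $\ell$ and all $(m_1,m_2)$, track the two parity chains with their varying lengths, and then prove that the resulting doubly-indexed sums agree coefficient-by-coefficient with the four corner expansions of a single rational function — for instance that the plain polynomial count $2\binom{m_2+3}{3}$ at $m_1=1$ equals $\sum_{j\ge0}(2+2j)\,[t_2^{m_2-4j}]H_C(t_2)$, an identity in which the $\Gamma$-powers and the curve series conspire to rebuild a purely ambient count. A secondary difficulty is the justification that $\ker M_\ell$ is exactly the cyclic Koszul module: this needs $f_0,f_2$ general enough that the associated complex is exact, and it is precisely the failure of this genericity that separates the tuned locus $f=x_0^2f_0+x_1^2f_2$ from a further-degenerated one.
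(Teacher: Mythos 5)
This statement is one of the paper's conjectures: the paper offers no proof. Its evidence consists of (i) assembling $CS^0$ from the K\"ahler-cone and Zariski-chamber contributions plus a correction term matching $HS(\IP^3[4],t_2)$, consistent with the closed-form $h^0$ formulae of Ref.~\cite{Brodie:2021zqq} (see the table in Section~\ref{sec:special2,4}), and (ii) explicit computer checks for the higher cohomologies. Your proposal is therefore a genuinely different — and strictly more ambitious — route: a direct proof via the Koszul long exact sequence and an explicit analysis of multiplication by the tuned polynomial $f=x_0^2f_0+x_1^2f_2$ on ambient cohomology. The key structural points are sound: K\"unneth does kill $H^2(A,-)$, the parity decoupling of the $\IP^1$-exponent under $x_0^2,x_1^2$ is exactly what distinguishes the tuned locus from the generic $(2,4)$ hypersurface, Hilbert--Burch does identify $\ker M_\ell$ with the cyclic module on the signed maximal minors (the bidiagonal matrix has minors $\pm f_0^if_2^{\ell-i}$, whose ideal has grade $2$ since $f_0,f_2$ is a regular sequence), and spot checks (e.g.\ $m_1=-2$) confirm that the kernel/cokernel counts reproduce the coefficients of the stated expansions. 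If completed, this would upgrade the conjecture to a theorem, which the paper's own discussion does not do.

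One concrete inaccuracy to fix: your identification $\mathrm{coker}\,M_\ell\cong(R/(f_0,f_2))_{m_2}$ is only correct for $\ell=1$. For $\ell\ge 2$ a dimension count using $\ker M_\ell\cong V_{m_2-4-4\ell}$ gives
\begin{equation*}
\dim\mathrm{coker}\,M_\ell=\ell\dim V_{m_2}-(\ell+1)\dim V_{m_2-4}+\dim V_{m_2-4-4\ell}=\sum_{k=0}^{\ell-1}(\ell-k)\dim\bigl(R/(f_0,f_2)\bigr)_{m_2-4k}~,
\end{equation*}
i.e.\ the cokernel is resolved by the Buchsbaum--Rim (Eagon--Northcott) complex and contributes a weighted sum of shifted graded pieces of $R/(f_0,f_2)$, not a single one. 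This does not break the strategy — it is precisely the factor $\tfrac{1}{1-t_1^{-2}t_2^4}$ convolved against $H_C(t_2)$ that reappears here — but the resummation you flag as the main obstacle must be carried out with this corrected cokernel, and the boundary parity classes with empty target (whole source in the kernel) tracked separately. Finally, for $CS^2$ and $CS^3$ you can avoid repeating the $H^3,H^4$ analysis entirely by invoking Serre duality on $X$ ($K_X\cong\cO_X$) and checking that the rational function is anti-invariant under $t_i\mapsto t_i^{-1}$, which it is.
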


{\bfseries Complete intersection examples.}
The fact that the varieties covered by Conjectures~\ref{con:coh_intro1} and~\ref{con:7887tuned_intro} correspond to hypersurfaces is not essential for the existence of a universal generating function that can encode both the zeroth and the higher cohomology dimensions. The same is case in the following. 

\begin{conjecture}\label{con:7885_intro} Let $X$ be a general complete intersection of two hypersurfaces of bi-degrees $(1,1)$ and $(1,4)$ in $\IP^1\times \IP^4$, belonging to the deformation family with configuration matrix 
\begin{equation}\label{conf7885_intro}
\cicy{\IP^1 \\ \IP^4}{~1& 1~\\ ~1& 4~}~.
\end{equation}
The effective, movable and nef cones of $X$ are given by
\begin{equation}
\begin{aligned}
{\rm Eff}(X)=\IR_{\ge 0} H_1 +\IR_{\ge 0} &(H_2-H_1),~{\rm Mov}(X)= \IR_{\ge 0} H_1+\IR_{\ge 0} (4H_2-H_1)\\
&{\rm Nef}(X)=\IR_{\ge 0}H_1+\IR_{\ge 0}H_2~,
\end{aligned}
\end{equation}
where $H_1 = \mathcal O_{\IP^1\times \IP^4}(1,0)|_X$ and $H_2 = \mathcal O_{\IP^1\times \IP^4}(0,1)|_X$. All cohomology dimensions of all line bundles in the Picard group of $X$ are encoded by the following generating function, relative to the basis $\{H_1,H_2\}$ of ${\rm Pic}(X)$: 
\begin{equation}\label{cs07885_intro}
\begin{aligned}
CS^0(X,\mathcal O_X)&=\left( \frac{\left(1-t_2\right)^2\left(1-t_2^4\right)^2}{\left(1-t_1 \right)^2 \left(1-t_2 \right)^5 \left(1-t_1^{-1}t_2 \right) \left( 1-t_1^{-1}t_2^4 \right)} \,,\!\begin{array}{cc} t_2& t_1 \\0&0 \end{array}\!\! \right) \\
CS^1(X,\mathcal O_X)&=\left( \frac{\left(1-t_2\right)^2\left(1-t_2^4\right)^2}{\left(1-t_1 \right)^2 \left(1-t_2 \right)^5 \left(1-t_1^{-1}t_2 \right) \left( 1-t_1^{-1}t_2^4 \right)} \,,\!\begin{array}{cc} t_2& t_1 \\ \infty&0 \end{array}\!\! \right) \\
CS^2(X,\mathcal O_X)&=\left( \frac{\left(1-t_2\right)^2\left(1-t_2^4\right)^2}{\left(1-t_1 \right)^2 \left(1-t_2 \right)^5 \left(1-t_1^{-1}t_2 \right) \left( 1-t_1^{-1}t_2^4 \right)} \,,\!\begin{array}{cc} t_2& t_1 \\ 0&\infty \end{array}\!\! \right) \\
CS^3(X,\mathcal O_X)&=\left( \frac{\left(1-t_2\right)^2\left(1-t_2^4\right)^2}{\left(1-t_1 \right)^2 \left(1-t_2 \right)^5 \left(1-t_1^{-1}t_2 \right) \left( 1-t_1^{-1}t_2^4 \right)} \,,\!\begin{array}{cc} t_2& t_1 \\ \infty&\infty \end{array}\!\! \right)
\end{aligned}
\end{equation}
\end{conjecture}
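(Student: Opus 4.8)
First I would pin down the geometry, writing $A=\IP^1\times\IP^4$. Adjunction gives $K_X=\cO_X$ (the two bidegrees sum to $(2,5)$, cancelling $K_A=\cO_A(-2,-5)$), so $X$ is a Calabi--Yau threefold; the Lefschetz hyperplane theorem together with favourability of the embedding gives $\mathrm{Pic}(X)=\IZ H_1\oplus\IZ H_2$, with $H_1,H_2$ the pullbacks of $\cO(1)$ under the two projections. The projection $X\to\IP^1$ exhibits $X$ as a K3 fibration (each fibre is a $(1,4)$ complete intersection in $\IP^4$, i.e.\ a quartic K3), so $H_1$ spans an extremal ray. Writing the two defining polynomials as linear forms $a_0(y)x_0+a_1(y)x_1$ and $b_0(y)x_0+b_1(y)x_1$ in the $\IP^1$ coordinates, the projection $X\to\IP^4$ maps $X$ birationally onto the quintic $Y=\{\det M=0\}$, where $M$ is the $2\times2$ matrix of coefficients; over the $16$ points where all four entries vanish the fibre jumps to a whole $\IP^1$. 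Thus $H_2$ defines a small contraction of $16$ rational curves, $Y$ acquires $16$ nodes, and the flop to the second small resolution $X^+$ produces the movable cone. The remaining effective ray is the rigid divisor $\Gamma$ in class $H_2-H_1$, realised by the section $a_0(y)/x_1=-a_1(y)/x_0$ of $\cO_A(-1,1)|_X$ coming from the bidegree $(1,1)$ relation. Reading off the two extremal contractions, the flop, and $\Gamma$ yields the stated $\mathrm{Nef}$, $\mathrm{Mov}$ and $\mathrm{Eff}$ cones.

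\textbf{Zeroth cohomology.}
Since $X$ has finitely many chambers it is a Mori dream space, so one route to $CS^0$ is the Hilbert--Poincar\'e series of its (finitely generated) Cox ring. I would instead follow the birational method of Section~\ref{sec:7887general} and write
\[
CS^0(X,\cO_X)=HS(X,t_1,t_2)+HS(X^+,t_1,t_2)-\frac{1-t_2^5}{(1-t_2)^5},
\]
where $HS(X,t_1,t_2)=\tfrac{(1-t_1t_2)(1-t_1t_2^4)}{(1-t_1)^2(1-t_2)^5}$, the flopped series is $HS(X^+,t_1,t_2)=HS(X,t_1,t_2)\big|_{t_1\mapsto t_1^{-1}t_2^4}$, and the correction is the Hilbert series of the nodal quintic $Y$ at the wall $H_2$. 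On the nef chamber $[H_1,H_2]$ a divisor is nef and big, so Kawamata--Viehweg vanishing gives $h^0=\chi$ with the sections counted by $HS(X,t_1,t_2)$; on the flopped chamber $[H_2,4H_2-H_1]$ the flop-invariance of $h^0$ lets me count on $X^+$. The genuinely new feature, absent from Section~\ref{sec:7887general}, is the rigid region between $\mathrm{Mov}$ and $\mathrm{Eff}$: there I would strip off copies of the fixed divisor $\Gamma$, which is exactly what the factor $(1-t_1^{-1}t_2)$ in the denominator of~\eqref{cs07885_intro} records. Summing these contributions and clearing denominators should reproduce the single rational function of~\eqref{cs07885_intro} expanded around $(t_2,t_1)=(0,0)$.

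\textbf{Higher cohomology via Serre duality and Koszul.}
Because $K_X=\cO_X$, Serre duality reads $h^i(\cO_X(D))=h^{3-i}(\cO_X(-D))$, i.e.\ $CS^{3-i}(t_1,t_2)=CS^i(t_1^{-1},t_2^{-1})$. Expanding the generating function around $\infty$ in a variable is precisely the $t\mapsto t^{-1}$ operation used in the $\IP^n[d]$ example of the introduction, so duality immediately pairs $CS^0$ (both variables at $0$) with $CS^3$ (both at $\infty$) and $CS^1$ ($t_2=\infty,\,t_1=0$) with $CS^2$ ($t_2=0,\,t_1=\infty$); it therefore suffices to establish $CS^1$. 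For that I would resolve $\cO_X$ by the twisted Koszul complex on $A$,
\[
0\to\cO_A(-2,-5)\to\cO_A(-1,-1)\oplus\cO_A(-1,-4)\to\cO_A\to\cO_X\to0,
\]
tensor with $\cO_A(m_1,m_2)$, and run the hypercohomology spectral sequence. The $E_1$ terms are computed by K\"unneth from the Bott formula on $\IP^1$ and $\IP^4$, and the $d_1$ differentials are multiplication by the two defining polynomials. For a general member of the family these maps have maximal rank, giving $h^1$ as an explicit quasi-polynomial in $(m_1,m_2)$ supported on a single cone; assembling it and matching with~\eqref{cs07885_intro} expanded around $(t_2,t_1)=(\infty,0)$ finishes $CS^1$, hence $CS^2$ by duality. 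As a global check, the alternating sum $CS^0-CS^1+CS^2-CS^3$ must reproduce the Euler-characteristic series, which by Conjecture~\ref{con:Euler_intro} is computed independently from the coordinate-ring Hilbert series; and the absence of $(-1)$ prefactors here, in contrast to Conjecture~\ref{con:coh_intro1}, is consistent with the even multiplicities of $(1-t_2)^2(1-t_2^4)^2$ in the numerator under expansion around infinity.

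\textbf{Main obstacle.}
The hard part is the higher-cohomology step. As the paper already notes for the Hirzebruch surfaces of Theorem~\ref{thm:HirzebruchS}, there is no general machinery here, and the Koszul spectral sequence must be controlled \emph{uniformly} in $(m_1,m_2)$: I would need to prove that for a general complete intersection the relevant multiplication maps between the K\"unneth pieces attain maximal rank for all twists simultaneously, and then identify precisely the walls of the non-vanishing chamber of $h^1$. This is exactly where complex-structure dependence enters, since on special loci the ranks drop and the generating function changes; establishing the genericity statement is therefore the crux of the argument, while the cone computation and $CS^0$ are comparatively routine.
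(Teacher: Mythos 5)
Your geometric setup, the cone structure, and the overall strategy for $CS^0$ (sum the Hilbert--Poincar\'e series of the birational models and subtract a wall correction) are the same as in the paper's supporting discussion, Example~\ref{ex:7885} — note that the paper does not prove this statement either; it is a conjecture backed by explicit cohomology computations. However, two of your three ingredients are wrong. First, the two birational models here are \emph{not} isomorphic (unlike the $(2,4)$ hypersurface of Section~\ref{sec:7887general}), so the flopped series is not $HS(X,t_1,t_2)\big|_{t_1\mapsto t_1^{-1}t_2^4}$; the paper instead reads off $HS(X')$ from the flop of the ambient \emph{toric} variety in which $X$ sits via its Cox-ring presentation, and that single term also carries the Zariski chamber (the factor $1-t_1^{-1}t_2$ records stripping off the rigid divisor $H_2-H_1$), which your three-term sum omits entirely. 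The discrepancy is not cosmetic: the coefficient of $t_1^{-1}t_2^{4}$ in your substituted series is $2$, whereas in the conjectured rational function of~\eqref{cs07885_intro} it is $36$. Second, the correction term is $\tfrac{1+t_2^5}{(1-t_2)^5}$, not $HS(\IP^4[5],t_2)=\tfrac{1-t_2^5}{(1-t_2)^5}$: the prescription is that the restriction to the wall of the \emph{whole} combination must equal the Hilbert series of the nodal quintic, and since each of the first two terms restricts to $\tfrac{1}{(1-t_2)^5}$ the correction must be $\tfrac{1+t_2^5}{(1-t_2)^5}$. With your choice you would get $h^0(X,mH_2)=\binom{m+4}{4}+\binom{m-1}{4}$, whereas the Koszul resolution (and the function in~\eqref{cs07885_intro}) gives $\binom{m+4}{4}-\binom{m-1}{4}$, e.g.\ $127$ versus $125$ at $m=5$. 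So your sum does not combine to the stated rational function.

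On the higher cohomologies the paper likewise offers only computed data, so your Koszul spectral-sequence plan for $CS^1$ is a legitimate proposal, and you correctly identify uniform maximal-rank genericity as the crux. But your Serre-duality reduction does not close as claimed. The function $F$ in~\eqref{cs07885_intro} satisfies $F(t_1^{-1},t_2^{-1})=-F(t_1,t_2)$ — the denominator contains an odd number of binomial factors, which your parity argument about the numerator $(1-t_2)^2(1-t_2^4)^2$ overlooks — so if $CS^0$ is the $(0,0)$ expansion then the $(\infty,\infty)$ expansion is $-\sum_{m}h^0(m_1,m_2)t_1^{-m_1}t_2^{-m_2}$, whose constant term is $-1$ while $h^3(\cO_X)=1$. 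Carried out honestly, your duality step therefore produces the $(0,\infty)$ and $(\infty,\infty)$ expansions equal to $-CS^2$ and $-CS^3$, i.e.\ with the same $(-1)$ prefactors that appear explicitly in Conjectures~\ref{con:coh_intro1} and~\ref{con:7887tuned_intro}, not the sign-free form you set out to verify; this is a point your argument would need to confront rather than wave at.
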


\begin{center}
\begin{figure}[h]
\begin{center}
\includegraphics[width=7.1cm]{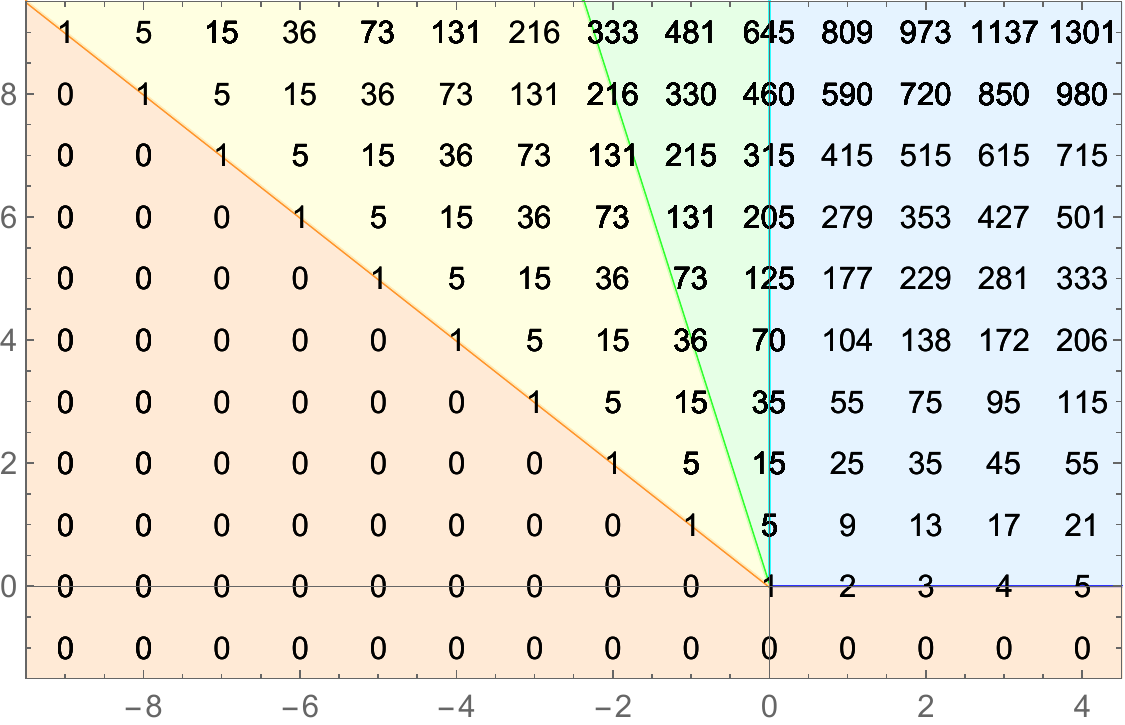}~~~~~
\includegraphics[width=7.1cm]{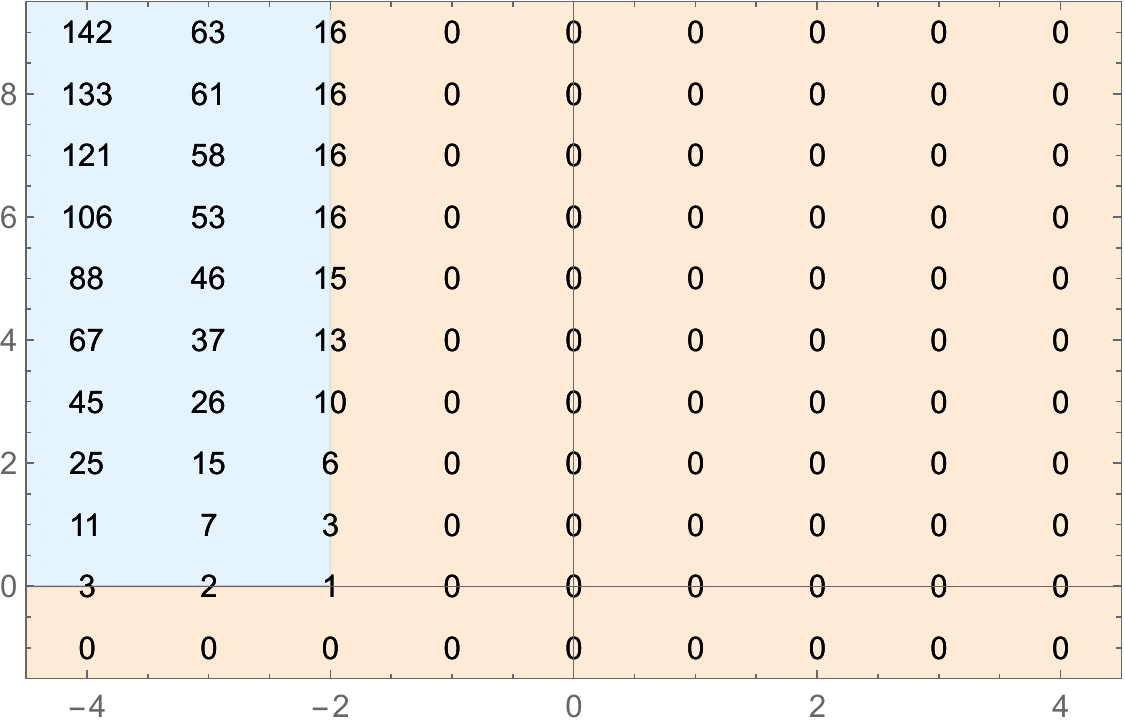}
\caption{\itshape Zeroth and first line bundle cohomology data (left plot and, respectively, right plot) for a general Calabi-Yau complete intersection in the deformation family~\eqref{conf7885_intro}. The numbers indicate cohomology dimensions while their locations indicate first Chern classes of line bundles.}
\label{fig:X7885_intro} 
\end{center}
\end{figure}
\end{center}

The generating function \eqref{cs07885_intro} has been constructed by adding up the Hilbert-Poincar\'e series associated with the coordinate rings of the two birational models of $X$ and subtracting a correction term, as discussed in Example~\ref{ex:7885}: 
\begin{equation*}
\begin{aligned}
CS^0(X,t_1,t_2) & =\left(\frac{(1-t_1t_2)(1-t_1t_2^4)}{(1-t_1)^2 (1-t_2)^5}~,\begin{array}{cc} t_2& t_1 \\0&0 \end{array}\right) + \\
&~~~~~~~~~~~~\left( \frac{(1-t_1^{-1}t_2^5)^2}{(1-t_1^{-1}t_2)^2 (1-t_2)^5(1- t_1^{-1}t_2^4)}~,\begin{array}{cc} t_2& t_1 \\0&0 \end{array}\right) -\left(\frac{1+t_2^5}{\left(1-t_2\right)^5}~,\begin{array}{c} t_2 \\0 \end{array}\right)~,
\end{aligned}
\end{equation*}
where the correction term is such that:
\begin{equation}
\begin{aligned}
\left.\frac{(1-t_1t_2)(1-t_1t_2^4)}{(1-t_1)^2 (1-t_2)^5}\right|_{t_1= 0}& \!\!+~ \left.\frac{(1-t_1^{-1}t_2^5)^2}{(1-t_1^{-1}t_2)^2 (1-t_2)^5(1- t_1^{-1}t_2^4)}\right|_{t_1= \infty } - \frac{1+t_2^5}{\left(1-t_2\right)^5} = HS(\IP^4[5],t_2)
\end{aligned}
\end{equation}
which is the Hilber-Poincar\'e series $HS(\IP^4[5],t_2)$ associated with the singular threefold involved in the flop. The generating function in~\eqref{cs07885_intro} should be interpreted as the Hilbert-Poincare\'e series associated with the Cox ring of $X$, represented as a complete intersection in a toric variety, as detailed in Example~\ref{ex:7885}. \fref{fig:X7885_intro} displays a part of the cohomology data on which Conjecture~\ref{con:7885_intro} is based.

{\bfseries The bicubic Calabi-Yau threefold.}
The above Calabi-Yau threefold examples suggest a certain pattern for constructing the cohomology series, namely to combine the Hilbert-Poincar\'e series associated with each of the birational models of the variety, while keeping track of the way in which the various Mori chambers attach to each other in the movable cone, and then subtract suitable correction terms, such that along every wall separating two Mori chambers the relevant contributions restrict to the Hilbert-Poincar\'e series of the singular variety involved in the corresponding small modification. While confirming this pattern for the zeroth cohomology series, the following example shows that this prescription is not general enough, if the aim is to obtain a universal generating function that encodes both the zeroth and the higher cohomology series.

\begin{conjecture}\label{con:bicubic_intro1}
Let $X$ be a general hypersurface of bidegree $(3,3)$ in $\IP^2\times \IP^{2}$. The effective, movable and nef cones coincide, 
\begin{equation}
{\rm Eff}(X)={\rm Mov}(X)={\rm Nef}(X)=\IR_{\ge 0}H_1+\IR_{\ge 0}H_2~,
\end{equation} 
where $H_1 = \mathcal O_{\IP^2\times \IP^2}(1,0)|_X$ and $H_2 = \mathcal O_{\IP^2\times \IP^2}(0,1)|_X$. The first line bundle cohomology is non-trivial within the two disconnected cones $\IR_{>0}H_2+\IR_{\geq 0}(-H_1+H_2)$ and $\IR_{>0} H_1+\IR_{\geq 0}(H_1-H_2)$. Defining
\begin{equation}
G(x,y) = 
\frac{(x^{-1} y)^3 ((1 + x - y)^3 - 1 + 3 x (1 - y))}{(1 - x^{-1} y)^3 (1 - y)^3}~,
\end{equation}
all cohomology dimensions of all line bundles on $X$ are encoded in the following generating functions, relative to the basis $\{H_1,H_2\}$ of ${\rm Pic}(X)$:
\begin{equation}
\begin{aligned}
CS^0(X,\mathcal O_X)&  = 1 +  \left( G(t_1,t_2)\,,\!\begin{array}{cc} t_1& t_2 \\0&0 \end{array}\!\!\right)+\left( G(t_2,t_1)\,,\!\begin{array}{cc} t_1& t_2 \\0&0 \end{array}\!\!\right) \\
CS^1(X,\mathcal O_X)& = 0 +  \left( G(t_1,t_2)\,,\!\begin{array}{cc} t_1& t_2 \\ \infty&0 \end{array}\!\!\right)+\left( G(t_2,t_1)\,,\!\begin{array}{cc} t_1& t_2 \\0&\infty \end{array}\!\!\right) \\
-CS^2(X,\mathcal O_X)& = 2+ \left( G(t_1,t_2)\,,\!\begin{array}{cc} t_1& t_2 \\ 0&\infty \end{array}\!\!\right)+\left( G(t_2,t_1)\,,\!\begin{array}{cc} t_1& t_2 \\ \infty& 0 \end{array}\!\!\right) \\
-CS^3(X,\mathcal O_X)& =  1+ \left( G(t_1,t_2)\,,\!\begin{array}{cc} t_1& t_2 \\ \infty&\infty \end{array}\!\!\right)+\left( G(t_2,t_1)\,,\!\begin{array}{cc} t_1& t_2 \\ \infty&\infty \end{array}\!\!\right) 
\end{aligned}
\end{equation}
\end{conjecture}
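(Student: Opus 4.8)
The plan is to compute every $h^i(X,\cO_X(aH_1+bH_2))$ from the Koszul resolution $0\to\cO(a-3,b-3)\to\cO(a,b)\to\cO_X(a,b)\to 0$ on $\IP^2\times\IP^2$, feeding in the cohomology of line bundles on $\IP^2$ through the Künneth formula. Two reductions shrink the task. First, since $X$ is a Calabi--Yau threefold, Serre duality gives $h^i(\cO_X(a,b))=h^{3-i}(\cO_X(-a,-b))$, so $h^2,h^3$ are determined by $h^1,h^0$ via the substitution $(a,b)\mapsto(-a,-b)$, which on generating functions is $t_i\mapsto t_i^{-1}$ and explains why the four $CS^i$ share one rational function expanded about different corners. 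Second, the $\IZ_2$ symmetry exchanging the two factors gives $h^i(\cO_X(a,b))=h^i(\cO_X(b,a))$, so it suffices to understand $h^0$ and $h^1$ on a fundamental domain; this is precisely the information carried by $G(t_1,t_2)$ together with its transpose $G(t_2,t_1)$.

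For $h^0$ the computation is clean: because $H^1(\IP^2\times\IP^2,\cO(c,d))=0$ for all $(c,d)$ (as $H^1(\IP^2,\cO(k))=0$), and multiplication by a general $f\in H^0(\cO(3,3))$ is injective on sections, the long exact sequence yields $h^0(\cO_X(a,b))=h^0(\cO(a,b))-h^0(\cO(a-3,b-3))$. This is a product of truncated binomials supported on the first quadrant, which proves $\Eff(X)=\IR_{\ge 0}H_1+\IR_{\ge 0}H_2$ and gives $\sum h^0\,t_1^at_2^b=(1-t_1^3t_2^3)/((1-t_1)^3(1-t_2)^3)$. The first part of the proof is then to verify the rational-function identity that this Hilbert series equals $1+G(t_1,t_2)+G(t_2,t_1)$ expanded at $(0,0)$, which is a finite bookkeeping check once the iterated-expansion convention is fixed.

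The same sequence identifies $h^1(\cO_X(a,b))=\dim\ker[\,\cdot f:H^2(\cO(a-3,b-3))\to H^2(\cO(a,b))\,]$, and Künneth splits both source and target into the two summands $H^2(\IP^2,\cO(\cdot))\otimes H^0$ and $H^0\otimes H^2(\IP^2,\cO(\cdot))$, interchanged by the $\IZ_2$ symmetry; this is the structural reason for the two terms $G(t_1,t_2)$ and $G(t_2,t_1)$. On the shallow strip $-2\le a\le 0,\ b\ge 3$ the target summand vanishes, so $h^1=\binom{2-a}{2}\binom{b-1}{2}$, a clean binomial count. The hard part will be the deep region $a\le -3,\ b\ge 3$, where source and target are both nonzero and one must determine the rank of $\cdot f$ for general $f$: I expect this to be the main obstacle. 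Serre-dualising the relevant $\IP^2$-factor turns this into a multiplication map between spaces of polynomials, but because $f$ is a generic sum of decomposables $\sum_\alpha g_\alpha(x)\otimes h_\alpha(y)$ the map is not a single cup product and there is no Bott-vanishing shortcut; it is genuinely a generic-tensor rank question. I plan to resolve it by exhibiting one explicit $f$ (e.g. a Fermat-type or monomial-degenerate polynomial) for which the map becomes block-triangular and its rank is computable and maximal, and then to invoke lower-semicontinuity of the rank in $f$ to conclude that the general map also has maximal rank. This would give $h^1=\max\!\big(0,\binom{2-a}{2}\binom{b-1}{2}-\binom{-a-1}{2}\binom{b+2}{2}\big)$ on the deep region, whose positivity locus has the recession cone $\IR_{>0}H_2+\IR_{\ge 0}(-H_1+H_2)$ claimed in the statement.

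With $h^0$ and $h^1$ pinned down in closed form on the fundamental domain, the final step is to check that the iterated Laurent expansions of $G(t_1,t_2)$ and $G(t_2,t_1)$ about the prescribed combinations of $0$ and $\infty$ reproduce these coefficient arrays, with $h^2,h^3$ following automatically from the $t_i\mapsto t_i^{-1}$ flip. The denominator $(1-t_1^{-1}t_2)^3(1-t_2)^3$ encodes the rays $-H_1+H_2$ and $H_2$ bounding the cone, the numerator polynomial $(1+x-y)^3-1+3x(1-y)$ tunes the behaviour near the apex so that the difference-of-binomials value and its $\max(0,\cdot)$ truncation come out correctly, and the explicit constants $1,0,2,1$ absorb the finite discrepancies between the exact support (which excludes, e.g., the points with $0<b<3$ that lie in the asymptotic cone yet have vanishing cohomology) and the stated cones. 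These apex corrections involve only finitely many lattice points near the origin, so they can be confirmed directly against the piecewise formulas, completing the verification. As an independent consistency check on the delicate deep-region rank, I would compare the resulting $h^0-h^1+h^2-h^3$ with the Euler characteristic generating function supplied by Conjecture~\ref{con:Euler_intro}.
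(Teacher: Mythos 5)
The paper does not actually prove this statement: it is presented as a conjecture (Conjecture~\ref{con:bicubic}), supported only by explicit computer calculations of $h^i$ over a finite range of line bundles and by the observation that the three $CS^0$ terms resum to the Hilbert--Poincar\'e series $(1-t_1^3t_2^3)/((1-t_1)^3(1-t_2)^3)$. So there is no paper proof to compare against, and your proposal is an attempt at something strictly stronger than what the paper establishes. Your overall framework is sound and consistent with the paper's data: the Koszul sequence on $\IP^2\times\IP^2$ together with K\"unneth does give $h^0(\cO_X(a,b))=h^0(\cO(a,b))-h^0(\cO(a-3,b-3))$ and $h^1(\cO_X(a,b))=\dim\ker[H^2(\cO(a-3,b-3))\to H^2(\cO(a,b))]$, Serre duality on the Calabi--Yau threefold does reduce $h^2,h^3$ to $h^1,h^0$, and your asymptotic count in the deep region (source minus target equal to $\tfrac{3}{2}(a+b)(-ab-2)$) reproduces exactly the cone $b>-a$, $a\le 0$ claimed in the statement, which is a nontrivial consistency check.

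The genuine gap is the one you yourself flag: the assertion that the multiplication map $H^2(\IP^2\times\IP^2,\cO(a-3,b-3))\to H^2(\IP^2\times\IP^2,\cO(a,b))$ has maximal rank for general $f$ throughout the deep region $a\le -3$, $b\ge 3$. Everything quantitative in $CS^1$ (and hence $CS^2$) for those degrees hinges on this, and it is not a single cup-product map to which a standard maximal-rank theorem applies: after K\"unneth and Serre duality it is a family of bilinear maps controlled by the generic tensor $f=\sum_\alpha g_\alpha\otimes h_\alpha$, and generic maximal rank of such maps is exactly the kind of statement that can fail (Strassen-type phenomena). Your plan --- exhibit one degenerate $f$ with computable maximal rank and invoke semicontinuity --- is the right shape of argument, but a monomial or Fermat-type $f$ typically does \emph{not} achieve maximal rank for such truncated-multiplication maps (the block-triangular structure tends to create systematic kernel), so the choice of witness is itself a nontrivial problem and must be exhibited, not just postulated; moreover semicontinuity only gives maximal rank for each fixed $(a,b)$ on a dense open set of $f$ depending on $(a,b)$, so you also need an argument (or a single witness working for all $(a,b)$ simultaneously, e.g.\ via a multigraded Artinian/Lefschetz-type property of the ring $\mathbb C[x,y]/(f)$) to conclude for one general $f$ and all degrees at once. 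Until that step is carried out, the proposal establishes $CS^0$ and the shallow strip of $CS^1$ rigorously but leaves the conjecture open in the same place the paper does. The remaining items --- verifying that the iterated Laurent expansions of $G$ at the four corners reproduce the piecewise formulas, including the signs on $CS^2,CS^3$ and the constants $1,0,2,1$ absorbing the finitely many apex discrepancies --- are indeed finite checks, but they should be recorded explicitly since the specific numerator of $G$ is doing real work near the apex.
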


The interpretation of the generating function $G(x,y)$ is not transparent. However, the three terms that give the zeroth cohomology dimensions combine to
\begin{equation}
CS^0(X,\mathcal O_X) =\left( \frac{(1-t_1^3t_2^3)}{(1-t_1)^3(1-t_2)^3}\,,\!\begin{array}{cc} t_1& t_2 \\0&0 \end{array}\!\!\right)~,
\end{equation}
which, as expected, is the Hilbert-Poincar\'e series for the homogeneous coordinate ring of $X$. Indeed, in this case the global sections of all effective line bundles are polynomial and therefore are counted by the dimension of the corresponding graded piece of the homogeneous coordinate ring of $X$ (see the discussion around Example~\ref{ex:Cox}).

{\bfseries Non-Mori dram spaces with an infinite number of flops.}
The above examples are all Mori dream spaces, i.e.~varieties for which the Cox ring is finitely generated. Being a Mori dream space is a relatively strong condition and there are many examples of varieties for which this fails to be the case, e.g.~due to the non-polyhedrality or the non-closure of the effective or nef cones. Examples of non-Mori dream spaces that admit an infinite number of flops and line bundle cohomology thereon have been studied in Refs.~\cite{Brodie:2020fiq, Brodie:2021ain, Brodie:2021nit}. For such examples we propose generating functions involving an infinite sum of rational functions.

\begin{conjecture}\label{con:inf_flops_intro}
Let $X$ be a general complete intersection Calabi-Yau threefold in the deformation family given by the configuration matrix 
\begin{equation*}
\cicy{\IP^4 \\ \IP^4}{2& 0& 1 & 1& 1\\ 0& 2& 1& 1& 1}
\end{equation*}
and let $H_1 = \mathcal O_{\IP^4\times \IP^4}(1,0)|_X$ and $H_2 = \mathcal O_{\IP^4\times \IP^4}(0,1)|_X$.
The effective cone decomposes into a doubly infinite sequence of Mori chambers corresponding to the nef cones of isomorphic Calabi-Yau threefolds connected to $X$ through a sequence of flops, of the form
\begin{equation} 
K^{(n)} = \IR_{\geq 0} (a_{n+1} H_1-a_{n}H_2) + \IR_{\geq 0} (a_n H_1-a_{n-1}H_2)
\end{equation}
where $a_n$ is given by 
\begin{equation} 
a_n = \frac{\left(3+2 \sqrt{2}\right)^n-\left(3-2 \sqrt{2}\right)^n}{4 \sqrt{2}}~, ~~(a_n)=\ldots -204,-35,-6,-1,0,1,6,35,204,\ldots
\end{equation}
such that $K^{(0)}={\rm Nef}(X)$. A generating function for all line bundle cohomology dimensions can be written in the basis $\{H_1,H_2\}$ in terms of the functions
\begin{equation}
\begin{aligned}
G_n(t_1,t_2)= ~& \frac{ (1-(t_1^{a_{n+1}} t_2^{-a_{n}})^2) (1-(t_1^{a_{n}} t_2^{-a_{n-1}})^2)  (1-t_1^{a_n+a_{n+1}} t_2^{-a_{n-1}-a_{n}})^3}{ (1-t_1^{a_{n+1}} t_2^{-a_{n}})^5(1-t_1^{a_n} t_2^{-a_{n-1}})^5} \\[4pt]
C_n(t_1,t_2)=~ & \frac{(1-(t_1^{a_{n}} t_2^{-a_{n-1}})^2)(1-(t_1^{a_{n}} t_2^{-a_{n-1}})^3)}{(1-t_1^{a_n} t_2^{-a_{n-1}})^5}~,
\end{aligned}
\end{equation}
as follows:
\begin{equation}
\begin{aligned}
CS^0(X,\mathcal O_X)&  =  \left(\sum_{n=-\infty}^{0} G_n(t_1,t_2){+}C_n(t_1,t_2)\,,\!\begin{array}{cc} t_2& t_1 \\0&0 \end{array}\!\!\right)+\left(\sum_{n=1}^{\infty} G_n(t_1,t_2){+}C_n(t_1,t_2)\,,\!\begin{array}{cc} t_1& t_2 \\0&0 \end{array}\!\!\right) \\
CS^1(X,\mathcal O_X)&  =  \left(\sum_{n=-\infty}^{0} G_n(t_1,t_2){+}C_n(t_1,t_2)\,,\!\begin{array}{cc} t_2& t_1 \\0&\infty \end{array}\!\!\right) /.\,\{\text{remove terms } t_1^\alpha t_2^\beta \text{ with }\alpha+\beta<0 \} ~+ \\
&~~~~~~~~ \left(\sum_{n=0}^{\infty} G_n(t_1,t_2){+}C_n(t_1,t_2)\,,\!\begin{array}{cc} t_1& t_2 \\0&\infty \end{array}\!\!\right) /.\,\{\text{remove terms } t_1^\alpha t_2^\beta \text{ with }\alpha+\beta<0 \}  \\
1-CS^2(X,\mathcal O_X)&  = \left(\sum_{n=-\infty}^{0} G_n(t_1,t_2){+}C_n(t_1,t_2)\,,\!\begin{array}{cc} t_2& t_1 \\ \infty&0 \end{array}\!\!\right) /.\,\{\text{remove terms } t_1^\alpha t_2^\beta \text{ with }\alpha+\beta>0 \}~+\\
&~~~~~~~~ \left(\sum_{n=1}^{\infty} G_n(t_1,t_2){+}C_n(t_1,t_2)\,,\!\begin{array}{cc} t_1& t_2 \\ \infty&0 \end{array}\!\!\right)  /.\,\{\text{remove terms } t_1^\alpha t_2^\beta \text{ with }\alpha+\beta>0 \} \\
1-CS^3(X,\mathcal O_X)&  =  \left(\sum_{n=-\infty}^{0} G_n(t_1,t_2){+}C_n(t_1,t_2)\,,\!\begin{array}{cc} t_2& t_1 \\ \infty& \infty \end{array}\!\!\right)+\left(\sum_{n=0}^{\infty} G_n(t_1,t_2){+}C_n(t_1,t_2)\,,\!\begin{array}{cc} t_1& t_2 \\ \infty&\infty \end{array}\!\!\right)~. \\
\end{aligned}
\end{equation}

\end{conjecture}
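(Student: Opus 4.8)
The plan is to peel the statement into three layers: (i) the birational geometry and Mori-chamber decomposition, (ii) the zeroth cohomology series, which one can hope to establish rigorously, and (iii) the higher cohomology series, which are genuinely conjectural and for which I expect only a structural derivation plus computational corroboration. Throughout I assume the flop-theoretic description of complete intersections in products of projective spaces from Refs.~\cite{Brodie:2020fiq, Brodie:2021ain, Brodie:2021nit, Brodie:2021toe}.

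\textbf{Geometry and chamber structure.} First I would show that the extremal contraction of either boundary ray of $\mathrm{Nef}(X)$ collapses a family of rational curves to a singular Calabi--Yau threefold isomorphic to a $(2,3)$ complete intersection in $\IP^4$, whose flop produces a threefold $X_1$ again isomorphic to $X$. Iterating, the flops generate an infinite cyclic group acting on $\mathrm{Pic}(X)=\IZ H_1\oplus\IZ H_2$ by an integral matrix of determinant $1$ and trace $6$, e.g.\ $\phi=\left(\begin{smallmatrix}6&-1\\1&0\end{smallmatrix}\right)$ in a suitable basis. Its eigenvalues $3\pm2\sqrt2$ yield the Binet-type formula for $a_n$ and the recurrence $a_{n+1}=6a_n-a_{n-1}$, and the chambers are $K^{(n)}=\phi^n(\mathrm{Nef}(X))$ with ray generators $a_{n+1}H_1-a_nH_2$ and $a_nH_1-a_{n-1}H_2$; this confirms $K^{(0)}=\mathrm{Nef}(X)=\IR_{\ge0}H_1+\IR_{\ge0}H_2$ and that the two boundary rays of $\mathrm{Eff}(X)$ have irrational slopes, so $X$ is not a Mori dream space. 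Since every model is isomorphic to $X$ it carries the same intrinsic Hilbert--Poincar\'e series $\frac{(1-s_1^2)(1-s_2^2)(1-s_1s_2)^3}{(1-s_1)^5(1-s_2)^5}$; substituting for $s_1,s_2$ the monomials $t_1^{a_{n+1}}t_2^{-a_n}$ and $t_1^{a_n}t_2^{-a_{n-1}}$ attached to the ray generators of $K^{(n)}$ reproduces $G_n$, while $C_n$ is recognised as the Hilbert--Poincar\'e series of the singular $\IP^4[2,3]$ threefold sitting on the wall $\IR_{\ge0}(a_nH_1-a_{n-1}H_2)$.

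\textbf{Zeroth cohomology.} Because a flop is an isomorphism in codimension one, for any line bundle $L$ and any model $X_n$ one has $h^0(X,L)=h^0(X_n,L)$; choosing $n$ with $L\in K^{(n)}$ makes $L$ nef on $X_n$, and then $h^0(X_n,L)$ is the graded dimension of the Cox ring of $X_n$ in that class, i.e.\ the coefficient of $t_1^{m_1}t_2^{m_2}$ in $G_n$ expanded around the apex of $K^{(n)}$. The prescribed split into $\sum_{n\le0}$ with expansion order $(t_2,t_1)$ and $\sum_{n\ge1}$ with order $(t_1,t_2)$ is exactly what forces each Laurent expansion of $G_n$ to converge into the interior of the corresponding half of $\mathrm{Eff}(X)$, so that its support is the closed chamber $\overline{K^{(n)}}$. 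The terms $C_n$, supported on the shared walls, are then arranged so that after summation every lattice point of $\mathrm{Eff}(X)$ is counted with multiplicity one; concretely I would prove that $G_n+C_n$ restricts on each wall to that wall's Hilbert series, mirroring the finite-chamber identities already verified for the $(2,4)$ hypersurface in $\IP^1\times\IP^3$ and the complete intersection~\eqref{conf7885_intro}. The point requiring care is convergence: since the $K^{(n)}$ tile $\mathrm{Eff}(X)$ with disjoint interiors, each monomial lies in at most two closed chambers, so only finitely many summands contribute to any coefficient and the infinite sum is well defined term by term.

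\textbf{Higher cohomology and the main obstacle.} For the top two cohomologies I would invoke Serre duality on the Calabi--Yau threefold, $h^3(X,L)=h^0(X,L^{-1})$ and $h^2(X,L)=h^1(X,L^{-1})$; as in the Picard-number-one example, passing to $L^{-1}$ is implemented by expanding the \emph{same} master function around $t_i=\infty$, which accounts for the $(\infty,\infty)$ and mixed expansion points and for the overall signs, while the additive constants in $1-CS^2$ and $1-CS^3$ are fixed by matching the known values $h^2(\cO_X)=0$ and $h^3(\cO_X)=1$ at the origin. The genuinely hard part is $CS^1$ (and hence $CS^2$ by duality): there is no general closed-form theory for the first cohomology of line bundles on these threefolds, the support region of $h^1$ and the truncation rule ``remove terms with $\alpha+\beta<0$'' are empirical, and the non-Mori-dream nature removes any finite Cox presentation to anchor a clean argument. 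A rigorous proof would require tracking Cech representatives through the infinite sequence of flops and their spectral sequences -- exactly the step flagged as intractable already for toric surfaces in Section~\ref{sec:Hirzebruch} -- supplemented by a Kawamata--Viehweg vanishing input to delimit where $h^1$ is supported. I therefore expect the first-cohomology statement to remain conjectural, corroborated by direct evaluation of $h^1(X,\cO_X(m_1H_1+m_2H_2))$ over a large window of $(m_1,m_2)$ with the cohomology packages cited in the introduction, and the generating-function identity checked coefficient by coefficient.
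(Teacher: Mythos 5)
Your outline follows essentially the same route the paper itself takes, and it is worth being explicit that the paper does \emph{not} prove this statement: it is Conjecture~7, and the surrounding text only \emph{constructs} the $CS^0$ series from the infinite flop chain (flop-invariance of $h^0$, Kodaira vanishing inside each $K^{(n)}$, the Hilbert--Poincar\'e series $\frac{(1-s_1^2)(1-s_2^2)(1-s_1s_2)^3}{(1-s_1)^5(1-s_2)^5}$ of each isomorphic birational model with $s_1=t_1^{a_{n+1}}t_2^{-a_n}$, $s_2=t_1^{a_n}t_2^{-a_{n-1}}$, plus wall corrections), and then checks everything --- including the higher-cohomology expansion points and the $\alpha+\beta$ truncation rule for $CS^1$ and $CS^2$ --- against explicit computation. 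Your chamber analysis (recurrence $a_{n+1}=6a_n-a_{n-1}$, eigenvalues $3\pm2\sqrt2$, irrational boundary rays), your convergence argument (the ray generators of $K^{(n)}$ form a $\IZ$-basis since $a_n^2-a_{n+1}a_{n-1}=1$, so each $G_n$ is supported exactly on $\overline{K^{(n)}}\cap\IZ^2$ and each lattice point receives finitely many contributions), and your assessment that the $CS^1$ part and the truncation rule are irreducibly empirical all align with the paper's position.

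One concrete error: you identify the singular threefold on the walls as $\IP^4[2\;3]$ because $C_n$ has the shape of its Hilbert series, and you describe $C_n$ as ``the Hilbert--Poincar\'e series of the singular threefold sitting on the wall.'' The paper identifies the wall threefold instead as $\IP_{[3:1:1:1:1:1]}[6~2]$, with Hilbert series $\frac{(1-t^6)(1-t^2)}{(1-t^3)(1-t)^5}$, and the defining property of $C_n$ is not that it equals this series but that the two adjacent chamber contributions restricted to the wall, \emph{minus} $C_n$, reproduce it. The two rational functions genuinely differ (already at order $t^3$: $29$ versus $31$), and the paper explicitly warns against your reading in the analogous $(2,4)$ hypersurface example, where the correction term ``can be recognised as $HS(\IP^3[4],t_2)$, however, this is not the right interpretation.'' If your plan for proving the $CS^0$ identity hinges on each wall restricting to the Hilbert series of the contracted model, you need the correct contracted model and the correct role of $C_n$; as written, the wall identity you propose to verify is not the one that holds.
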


While the examples discussed in this paper are  intriguing, the general picture remains mysterious. The rest of the paper discusses in more detail the above results and presents other similar examples. Importantly, neither the type of the variety, nor its dimension, Picard number or the assumption of generality for the defining equations seem to play an essential role for the existence of generating functions for line bundle cohomology dimensions. 

It is tempting to speculate about the potential relevance of generating functions for the classification of projective varieties. Since such functions carry a lot of numerical information about the variety, it is natural to ask whether they uniquely determine the variety. A similar conjecture has been proposed for Fano varieties stating that these are uniquely determined by their regularised quantum period, which is a generating function for certain Gromov-Witten invariants~\cite{Cotes:2021}. Gromov-Witten invariants as well as topological invariants of line bundles have already been used to classify the Calabi-Yau threefolds resulting from the Kreuzer-Skarke list of reflexive four-dimensional polytopes up to Picard number $7$ according to their diffeomorphism type~\cite{Chandra:2023afu, Gendler:2023ujl} and it is reasonable to expect that line bundle cohomology generating functions will further distinguish these and other varieties.

\newpage
\section{Generating functions for the Euler characteristic of holomorphic line bundles}\label{sec:EulerChar}
After reviewing some standard material about the Hilbert-Poincar\'e series, in this section we propose a generating function for the Euler characteristic of holomorphic line bundles over complex projective varieties realised as complete intersections in products of projective spaces or toric varieties.

\subsection{Hilbert functions and Hilbert polynomials}
The homogeneous coordinate ring of a projective variety $X\subset \IP^n$ is the quotient ring 
\begin{equation}
R = \mathbb C [x_0,x_1,\ldots x_n]/I~,
\end{equation}
where $I$ is the homogeneous ideal defining $X$ and $S=\mathbb C [x_0,x_1,\ldots x_n]$ is the polynomial ring in the corresponding variables. These rings and ideals are graded by the total degree of the polynomials. Concretely, as a vector space over $\IC$, the polynomial ring $S$ is infinite dimensional, however, it admits a direct sum decomposition into homogeneous pieces
\begin{equation}
S = \bigoplus_{m\geq 0} S_m~,
\end{equation}
where $S_m$ is the set of homogenous polynomials of total degree $m$ together with the zero polynomial, which forms a vector space of dimension $\binom{m+n}{m}$. Denoting $I_m = I\cap S_m$, the Hilbert function of $I$ is defined for non-negative $m$ by the expression
\begin{equation}
HF_{S/I}(m) = {\rm dim}_{\mathbb C}\, S_m/I_m.
\end{equation}
For instance, when $I$ is a monomial ideal, $HF_{S/I}(m)$ is the number of monomials of total degree $m$ that do not belong to $I$. The Hilbert function is eventually polynomial, i.e.~for large enough $m$ it agrees with some polynomial, known as the Hilbert polynomial. 

The difference between the Hilbert function and the Hilbert polynomial can be phrased in terms of line bundles. Let $L$ be a very ample line bundle on $X$ defining the embedding $i:X\rightarrow \IP^n$ where $n= h^0(X,L)-1$ and $L=i^\ast \mathcal O_{\IP^n}(1)$.
Then 
\begin{equation}
R = \bigoplus _{m\geq 0} H^{0}(X,L^{\otimes m}) = \mathbb C[x_0,x_1,\ldots x_n]/I~,
\end{equation}
where $I$ is the ideal of the image $i(X)$. The Hilbert function is 
\begin{equation}
HF_{R}(m) = h^0(X,L^{\otimes m})~,
\end{equation}
which corresponds to the dimension of the quotient of ${\rm Sym}^m(\mathbb C^{n+1})$ by the subspace of homogenous polynomials of degree $m$ which vanish on $i(X)$.
By Serre vanishing and the Riemann-Roch theorem, it is easy to see that for large enough $m$ the Hilbert function is eventually polynomial. 
Moreover, the Hilbert polynomial is precisely the Euler characteristic, $\chi(X, L^{\otimes m})$, which can be computed in terms of the first Chern class of the line bundle and the Todd classes of the tangent bundle of $X$ using the Hirzebruch-Riemann-Roch theorem, 
\begin{equation}
\chi(X,L) = \int_X {\rm ch}(L) \wedge {\rm td}(TX)~.
\end{equation}

The notion of Hilbert function can be easily generalised to the multi-graded setting. Given a multi-graded polynomial ring $S = \mathbb C[x_{1,0},\ldots x_{1,n_1}, x_{2,0},\ldots, \ldots,x_{p,n_p}]$ and a homogeneous ideal $I$, the multi-graded Hilbert function associated with $S/I$ is 
\begin{equation}
HF_{S/I}(m_1, m_2,\ldots,m_p) = {\rm dim}_{\mathbb C}\, S_{m_1,m_2,\ldots,m_p}/I_{m_1,m_2,\ldots,m_p}.
\end{equation}

\subsection{Hilbert-Poincar\'e series}\label{sec:HPseries}
It is convenient to encode the values of the Hilbert function $HF_R (m)$ for all $m$ simultaneously, into a generating function known as the Hilbert-Poincar\'e series of $R$, sometimes simply referred to as the Hilbert series. Using the same notation as above, the Hilbert-Poincar\'e series is defined as:
\begin{equation}\label{eq:HPseries}
HS(X,t) = \sum_{m=0}^{\infty} HF_R(m)\,t^m=\sum_{m=0}^{\infty} h^0(X,L^{\otimes m})\, t^m~,
\end{equation}
where the variable $t$ is called the ``counting variable''.
Note that for $m<0$, $h^0(X,L^{\otimes m})$ is always $0$, since $L$ is very ample, hence $HS(X,t)$ encodes the zeroth cohomology dimensions of all powers of $L$, both positive and negative. In general, the Hilbert-Poincar\'e series can be written in closed form as a rational generating function
\begin{equation}
HS(X,t) = \frac{p(t)}{(1-t)^{n+1}}~,
\end{equation}
where $n$ is the dimension of $X$ and $p(t)$ a polynomial in the counting variable~$t$ with integer coefficients.

\begin{proposition}
Let $X=\IP^n$ be the complex projective space of dimension $n$. The line bundle $L=\mathcal O_{\IP^n}(1)$ is very ample and leads to the Hilbert-Poincar\'e series
\begin{equation}\label{eq:HSPn}
HS(\IP^n,t) = \frac{1}{(1-t)^{n+1}} = \sum_{m=0}^{\infty} \binom{n+m}{n}t^m = \sum_{m=0}^{\infty}  h^0(\IP^n,\cO_{\IP^n}(m))\, t^m~.
\end{equation}
\begin{proof}
The statement follows immediately from Bott's formula for $h^0(\IP^n,\cO_{\IP^n}(m))$. Alternatively, it can be derived from the following elementary argument. Let $[x_0:x_1:\ldots:x_n]$ denote the homogeneous coordinates on~$\IP^n$. Then 
\begin{equation}
\frac{1}{1-x_i} = 1+x_i+x_i^2+\ldots~
\end{equation}
corresponds to the sum of every monomial in the variable $x_i$ counted exactly once. It follows that the product 
\begin{equation} \label{PnHS}
\prod_{i=0}^n\frac{1}{1-x_i} = \sum x_0^{m_0}x_1^{m_1}\ldots x_n^{m_n}
\end{equation}
contains all the monomials in $x_0,x_1,\ldots,x_n$ counted exactly once. Replacing $x_i\rightarrow t$ in Eq.~\eqref{PnHS}  for all $0\leq i\leq n$, the coefficient of $t^m$ on the RHS counts all the monomials of degree $m=\sum_i m_i$. 
\end{proof}
\end{proposition}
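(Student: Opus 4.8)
The plan is to reduce everything to counting monomials and then to read off the three stated presentations as different views of the same count. First I would identify the graded pieces of the coordinate ring with spaces of global sections: for $m \geq 0$, $H^0(\IP^n, \cO_{\IP^n}(m))$ is canonically the space of homogeneous polynomials of degree $m$ in the homogeneous coordinates $x_0, \ldots, x_n$, while for $m < 0$ it vanishes because $\cO_{\IP^n}(m)$ has negative degree on every line. Granting this, $h^0(\IP^n, \cO_{\IP^n}(m))$ equals the number of monomials $x_0^{m_0} \cdots x_n^{m_n}$ with $m_i \geq 0$ and $\sum_i m_i = m$.

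For the closed form I would follow the generating-function route indicated in the statement. Expanding each factor as $\frac{1}{1-x_i} = \sum_{k \geq 0} x_i^k$ and taking the product over $i = 0, \ldots, n$ produces every monomial in $x_0, \ldots, x_n$ exactly once; substituting $x_i \mapsto t$ uniformly collapses each monomial of total degree $m$ to $t^m$, so the coefficient of $t^m$ in $\prod_{i=0}^n (1-t)^{-1} = (1-t)^{-(n+1)}$ is precisely the number of degree-$m$ monomials, i.e.\ $h^0(\IP^n, \cO_{\IP^n}(m))$. This establishes $HS(\IP^n, t) = (1-t)^{-(n+1)}$ together with the last equality of the chain in one stroke. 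To pin down the binomial coefficient, I would either invoke the negative binomial expansion $(1-t)^{-(n+1)} = \sum_{m \geq 0} \binom{m+n}{n} t^m$ or, equivalently, count the same monomials directly by a stars-and-bars argument, distributing $m$ indistinguishable units of degree into $n+1$ boxes, which yields $\binom{m+n}{n}$.

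There is no genuine obstacle here: the only non-combinatorial ingredient is the identification of $H^0(\IP^n, \cO_{\IP^n}(m))$ with degree-$m$ polynomials together with the vanishing for $m < 0$, both of which are standard and are in any case subsumed by Bott's formula for $h^0(\IP^n, \cO_{\IP^n}(m))$. Everything else is elementary power-series bookkeeping, and the content of the argument is simply that the closed form, the binomial coefficients, and the cohomology dimensions are three descriptions of one and the same monomial count.
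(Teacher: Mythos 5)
Your argument is correct and follows essentially the same route as the paper's proof: identifying $h^0(\IP^n,\cO_{\IP^n}(m))$ with the count of degree-$m$ monomials, expanding $\prod_{i=0}^n(1-x_i)^{-1}$ to enumerate all monomials exactly once, and substituting $x_i\to t$ to obtain $(1-t)^{-(n+1)}$, with the binomial coefficient recovered by stars-and-bars. The only difference is that you spell out the identification of sections with polynomials and the vanishing for $m<0$, which the paper leaves implicit.
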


\begin{remark}
The Hilbert-Poincar\'e series~\eqref{eq:HSPn} encodes the zeroth cohomology dimensions of all line bundles on~$\IP^n$, since $\mathcal O_{\IP^n}(1)$ is very ample and hence $h^0(\IP^n,\cO_{\IP^n}(m))=0$ for $m<0$. The same series also encodes the top cohomology function, when expanded around $t=\infty$: 
\begin{equation}
\begin{aligned}
HS\left(\IP^n,\begin{array}{c} t\\ \infty \end{array}\right)& = \left(\frac{1}{(1-t)^{n+1}}~,\begin{array}{c} t\\ \infty \end{array}\right) =  \left(\frac{1}{(1-t^{-1})^{n+1}}~,\begin{array}{c} t\\ 0 \end{array}\right)/.~\{t\rightarrow t^{-1}\}\\
& = (-1)^{n+1}\sum_{m=0}^{\infty} \binom{n+m}{n}t^{-m-n-1} = (-1)^{n+1} \sum_{m=0}^{\infty}  h^n(\IP^n,\cO_{\IP^n}({-}m{-}n{-}1))\, t^{-m-n-1}~.
\end{aligned}
\end{equation}
More elementarily, the top cohomology representatives are in 1-to-1 correspondence with Laurent monomials of the form $x_0^{m_0}x_1^{m_1}\ldots x_n^{m_n}$, with $m_0,\ldots m_n<0$. Such monomials are counted by the series
\begin{equation}
\frac{1}{x_0\, x_1\ldots x_n}\left( 1+ \frac{1}{x_0}+ \frac{1}{x_0^2}+\ldots\right)\cdot\ldots \cdot \left( 1+ \frac{1}{x_n}+ \frac{1}{x_n^2}+\ldots\right)~.
\end{equation} 
By replacing $x_i=t^{-1}$, we recover the Hilbert-Poincar\'e series
\begin{equation}
t^{-(n+1)}HS(\IP^n,t^{-1}) = \sum_{m=0}^{\infty}  h^n(\IP^n,\cO_{\IP^n}({-}m{-}n{-}1))\, t^{-(m+n+1)}~.
\end{equation}
\end{remark}

\begin{example}
The Hilbert-Poincar\'e series associated with the weighted projective space of dimension $n$ with weights $a_0,\ldots,a_n$ is given by the formula:
\begin{equation}
HS(\mathbb{P}_{[a_0,\ldots,a_n]},t) = \frac{1}{\prod_{i=0}^{n}(1-t^{a_i})}= \sum_{m=0}^{\infty}  h^0(\mathbb{P}(a_0,\ldots,a_n),\cO_{\IP^n}(m))\, t^m~.
\end{equation}
\begin{proof}
Follow a similar argument with $[x_0:x_1:\ldots:x_n]$ denoting the homogeneous coordinates on $\IP_{[a_0,\ldots,a_n]}$. The coordinates $x_i$ are now to be replaced by $t^{a_i}$ in order to account for the scaling behaviour of the different coordinates.
\end{proof}
\end{example}

\begin{example}
Consider a smooth generic hypersurface $X\subset \IP^n$ of degree $d$. Let $n \geq 4$ such that, by the Lefschetz hyperplane theorem, Pic$(X)\cong \mathbb{Z}$ (see, for example, Section~3.1.B of Ref.~\cite{lazarsfeld2004positivity1}). Let $L=i^\ast\mathcal O_{\IP^n}(1)=\cO_X(1)$, where $i:X\hookrightarrow \IP^n$ is the embedding map, and note that $L$ is very ample by definition, so that $h^0(X,L^{\otimes m})=0$ for $m<0$. The Hilbert-Poincar\'e series of $X=\IP^n[d]$ with respect to $L$ encodes the dimensions of the spaces of global sections for all line bundles on $X$ and is given by
\begin{equation}
HS(\IP^n[d],t) = \frac{1-t^d}{(1-t)^{n+1}} = \sum_{m=0}^{\infty}  h^0(\IP^n[d],\cO_{\IP^n[d]}(m))\, t^m~.
\end{equation}
The expression follows from the additivity of the Hilbert-Poincar\'e series relative to the Koszul resolution
\begin{equation}
0\rightarrow \cO_{\IP^n}(-d)\rightarrow \cO_{\IP^n} \rightarrow \cO_X\rightarrow 0~,
\end{equation}
from which $t^d HS(\IP^n,t) - HS(\IP^n,t) + HS(\IP^n[d],t) =0$.
Note also that 
\begin{equation}
HS(\IP^n[d],t^{-1})=t^{n+1-d}HS(\IP^n[d],t)~,
\end{equation}
which corresponds to the Serre duality statement 
\begin{equation}
h^0(X,\cO_X(m))=h^{n-1} (X,\cO_X(-m)\otimes\cO_X(d-n-1))~.
\end{equation}
\end{example}

\subsection{Relation to the Euler characteristic of holomorphic line bundles}
The notion of Hilbert-Poincar\'e series can be easily extended to the multivariate case. The multivariate Hilbert-Poincar\'e series associated with the coordinate ring $S/I$ of a projective variety $X$ counts the number of multi-degree $m=(m_1,\ldots, m_p)$ monomials that are `linearly independent modulo' $I$, which only makes sense for $m_i\geq 0$. For sufficiently positive degrees, the Hilbert function becomes polynomial and corresponds to the Euler characteristic of certain line bundles. In this section we explore the possibility of extending the Hilbert-Poincar\'e series to a generating function that encodes the Euler characteristic of all possible line bundles on $X$. 

For concreteness, we consider projective varieties $X$ that can be embedded as smooth complete intersections in a product of $p$ projective spaces, $i:X\hookrightarrow \mathcal A = \IP^{n_1}\times \IP^{n_2}\times\ldots\times \IP^{n_p}$, 
cut out by $q$ polynomials with multidegrees $d^{(i)} = (d_{1}^{(i)}, d_{2}^{(i)},\ldots,d_{p}^{(i)})$, $1\leq i\leq q$. These multi-degrees can be conveniently collected as columns of the configuration matrix
\begin{equation}
\cicy{\IP^{n_1} \\ \vdots \\ \IP^{n_p}}{d_{1}^{(1)}& d_{1}^{(2)}& \ldots & d_{1}^{(q)}\\ \vdots & \vdots & &\vdots \\d_{p}^{(1)}& d_{p}^{(2)}& \ldots & d_{p}^{(q)}}~.
\end{equation}
For ease of notation, we write $d_{i,j} = d_i^{(j)}$. The dimension of $X$ is given by  $\sum_{i=1}^p n_i -q$ and in this paper we will assume this to be greater or equal to two, since for smooth projective curves of genus $g\geq 1$ the Picard group is uncountable.

The canonical bundle of the variety can be determined from the configuration matrix leading to following classification: $(i)$ when $\sum_{j=1}^q d_{i,j}<n_i+1$ for all $i$, $X$ is Fano; $(ii)$ when  $\sum_{j=1}^q d_{i,j}\leq n_i+1$ for all $i$, $X$ is semi-Fano; $(iii)$ when $\sum_{j=1}^q d_{i,j}=n_i+1$ for all $i$, $X$ is Calabi-Yau; $(iv)$ in all other cases, $X$ is of general type. 
We do not make any assumption on the type of $X$. If the embedding  $i:X\hookrightarrow \mathcal A$ is favourable, in the sense that $H^2(X,\mathbb Z)$ descends from $H^2(\mathcal A,\mathbb Z)$, the proposed generating function gives the Euler characteristic of all line bundles on $X$, otherwise it covers only those line bundles that descend from the ambient variety. Moreover, we make no assumption on the genericity of the defining polynomials, other than they should be general enough to define a sufficiently smooth variety (e.g.~mild singularities of $X$ may be allowed).   

Denoting by $H_i=\left( \cO_{\IP^{n_1}}(0)\otimes\ldots\otimes \cO_{\IP^{n_i}}(1) \otimes\ldots\otimes  \cO_{\IP^{n_p}}(0)\right)|_X$, the multivariate Hilbert-Poincar\'e series of $X$ takes the form
\begin{equation}\label{eq:HSCI}
\begin{aligned}
HS(X,t_1,t_2,\ldots, t_p) &= \frac{\prod_{j=1}^q \left(1- \prod_{i=1}^p t_i^{d_{i,j}} \right)  }{\prod_{i=1}^p \left(1- t_i \right)^{n_i+1}} ~,
\end{aligned}
\end{equation}
with the understanding that the expansion is taken around $t_i=0$. In general (though not for the above series since $d_{i,j}\geq 0$), the order in which a multivariate rational function is expanded as a power series matters. We adopt the convention by which the order of expansion is indicated by the order in which the variables appear as arguments. Moreover, we indicate the point of expansion below each variable, for instance Eq.~\eqref{eq:HSCI} will be written as:
\begin{equation}
HS\left(X,\begin{array}{cccc} t_1& t_2& \ldots& t_p\\0&0&\ldots& 0 \end{array}\right) = \frac{\prod_{j=1}^q \left(1- \prod_{i=1}^p t_i^{d_{i,j}} \right)  }{\prod_{i=1}^p \left(1- t_i \right)^{n_i+1}}~.
\end{equation}

In the previous sub-section we saw examples where the Hilbert-Poincar\'e series also encoded the information about the top cohomology, if expanded around infinity. Certainly, this is no surprise in virtue of Serre duality. For a complete intersection $X$, we can make this statement more concrete, as follows. Expanding around infinity involves replacing $t_i\rightarrow t_i^{-1}$, followed by the expansion around $\{t_i=0\}$ and then a second replacement $t_i\rightarrow t_i^{-1}$:
\begin{equation}
\begin{aligned}
HS&\left(X,\begin{array}{cccc} t_1& t_2& \ldots& t_p\\ \infty&\infty&\ldots& \infty \end{array}\right)  = \left( \frac{\prod_{j=1}^q \left(1- \prod_{i=1}^p t_i^{-d_{i,j}} \right)  }{\prod_{i=1}^p \left(1- t_i^{-1} \right)^{n_i+1}},\begin{array}{cccc} t_1& t_2& \ldots& t_p\\ 0&0&\ldots& 0 \end{array}\right)/.\{t_i\rightarrow t_i^{-1}\} \\
&= \left( \frac{(-1)^q \prod_{i=1}^p t_i^{-\sum_j d_{i,j}} }{(-1)^{\sum_i(n_i+1)} \prod_{i=1}^p t_i^{-(n_i+1)} } \frac{\prod_{j=1}^q \left(1- \prod_{i=1}^p t_i^{d_{i,j}} \right)  }{\prod_{i=1}^p \left(1- t_i \right)^{n_i+1}},\begin{array}{cccc} t_1& t_2& \ldots& t_p\\ 0&0&\ldots& 0 \end{array}\right)/.\{t_i\rightarrow t_i^{-1}\} \\[4pt]
& = \frac{\prod_{i=1}^p t_i^{K_i}}{(-1)^{{\rm dim}(X)+p}} \sum_{m_i\geq 0} h^0(X, H_1^{\otimes m_1}\otimes H_2^{\otimes m_2}\otimes\ldots \otimes H_p^{\otimes m_p})\, t_1^{-m_1}t_2^{-m_2}\ldots t_p^{-m_p}\\[4pt]
& = \frac{\prod_{i=1}^p t_i^{K_i}}{(-1)^{{\rm dim}(X)+p}} \sum_{m_i\leq 0} h^0(X, H_1^{\otimes (-m_1)}\otimes H_2^{\otimes (-m_2)}\otimes\ldots \otimes H_p^{\otimes (-m_p)})\, t_1^{m_1}t_2^{m_2}\ldots t_p^{m_p}\\[4pt]
& = \frac{1}{(-1)^{{\rm dim}(X)+p}} \sum_{K_i+\tilde m_i\leq 0} h^0(X, H_1^{\otimes (K_1-\tilde m_1)}\otimes H_2^{\otimes (K_2-\tilde m_2)}\otimes\ldots \otimes H_p^{\otimes (K_p-\tilde m_p)})\, t_1^{\tilde m_1}t_2^{\tilde m_2}\ldots t_p^{\tilde m_p}\\[4pt]
& = \frac{1}{(-1)^{{\rm dim}(X)+p}} \sum_{K_i+\tilde m_i\leq 0} h^n(X, H_1^{\otimes \tilde m_1}\otimes H_2^{\otimes \tilde m_2}\otimes\ldots \otimes H_p^{\otimes \tilde m_p})\, t_1^{\tilde m_1}t_2^{\tilde m_2}\ldots t_p^{\tilde m_p}~,
\end{aligned}
\end{equation}
where $K = \sum_{i=1}^p K_i H_i = \sum_{i=1}^p \left( \sum_{j=1}^q d_{ij}-(n_i+1)\right) H_i$ is the canonical bundle of $X$ and in the last step Serre duality was used. For large (negative) enough $\tilde m_i$, the coefficient of $t_1^{\tilde m_1}t_2^{\tilde m_2}\ldots t_p^{\tilde m_p}$ becomes a polynomial and is equal to the Euler characteristic $\chi(X, H_1^{\otimes \tilde m_1}\otimes H_2^{\otimes \tilde m_2}\otimes\ldots \otimes H_p^{\otimes \tilde m_p})$. It follows that the combined series 
\begin{equation}
HS\left(X,\begin{array}{cccc} t_1& t_2& \ldots& t_p\\0&0&\ldots& 0 \end{array}\right)  + HS\left(X,\begin{array}{cccc} t_1& t_2& \ldots& t_p\\ \infty&\infty&\ldots& \infty \end{array}\right) 
\end{equation}
gives the Euler characteristic for sufficiently large (both positive and negative) line bundle degrees. We take a step further and propose the following. 

\begin{con}\label{con:Euler} ${\rm (Conjecture~1)}$
Let $X\hookrightarrow \IP^{n_1}\times \IP^{n_2}\times\ldots\times \IP^{n_p}$ be a complete intersection in a product of $p$ projective spaces, cut out by $q$ homogeneous polynomials of multi-multidegrees $d^{(i)} = (d_{1,i}, d_{2,i},\ldots,d_{p,i})$ for $1\leq i\leq q$. Let $H_i=\left( \cO_{\IP^{n_1}}(0)\otimes\ldots\otimes \cO_{\IP^{n_i}}(1) \otimes\ldots\otimes  \cO_{\IP^{n_p}}(0)\right)|_X$ and
\begin{equation}
\begin{aligned}
HS(X,t_1,t_2,\ldots, t_p) &= \frac{\prod_{j=1}^q \left(1- \prod_{i=1}^p t_i^{d_{i,j}} \right)  }{\prod_{i=1}^p \left(1- t_i \right)^{n_i+1}} ~
\end{aligned}
\end{equation}
be the multivariate Hilbert-Poincar\'e series of $X$.
A generating function for the Euler characteristic of all line bundles on $X$ obtained as restrictions from the ambient variety is given by 
\begin{equation}
\sum_{\sigma\in S} (-1)^{{\rm sign}(\sigma)} HS\left(X,\begin{array}{ccc} t_1& \ldots& t_p\\\sigma(1)&\ldots& \sigma(p) \end{array}\right)  = \sum_{m_i\in \mathbb Z} \chi(X, H_1^{\otimes m_1}\otimes\ldots \otimes H_p^{\otimes m_p})\, t_1^{m_1}\ldots t_p^{m_p}
\end{equation}
where $S=\{0,\infty\}^{\times p}$ and ${\rm sign}(\sigma)$ is equal to the number of times $\infty$ appears in $\sigma$.
\end{con}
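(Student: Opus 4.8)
The plan is to reduce the whole statement to the ambient product of projective spaces $\mathcal A=\IP^{n_1}\times\cdots\times\IP^{n_p}$ by means of the Koszul resolution, working simultaneously on the cohomological and the generating-function sides. Write $\mathbf t^{c}=\prod_{i=1}^p t_i^{c_i}$ for a multi-index $c\in\IZ^p$, and for a rational function $f$ let $[f]_\sigma$ denote the Laurent expansion dictated by $\sigma\in\{0,\infty\}^{\times p}$. I would package the left-hand side as the linear operator
\begin{equation*}
\mathcal L(f)=\sum_{\sigma\in\{0,\infty\}^{\times p}}(-1)^{{\rm sign}(\sigma)}\,[f]_\sigma .
\end{equation*}
Because $HS(X,\mathbf t)$ has the separable denominator $\prod_i(1-t_i)^{n_i+1}$ and a polynomial numerator, each $[HS(X)]_\sigma$ is an unambiguous formal Laurent series, independent of the order of expansion, and $\mathcal L$ is $\IZ$-linear and commutes with multiplication by any Laurent monomial $\mathbf t^{c}$, since such multiplication merely shifts exponents and is carried out identically in every expansion regime. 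The target is then $\mathcal L(HS(X,\mathbf t))=\sum_{m\in\IZ^p}\chi(X,\bigotimes_i H_i^{\otimes m_i})\,\mathbf t^{m}$.

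First I would settle the base case $X=\mathcal A$, where $HS(\mathcal A,\mathbf t)=\prod_{i=1}^p(1-t_i)^{-(n_i+1)}$ factorises, so $[HS(\mathcal A)]_\sigma=\prod_i[(1-t_i)^{-(n_i+1)}]_{\sigma(i)}$ and, distributing the signed $\sigma$-sum across the product,
\begin{equation*}
\mathcal L(HS(\mathcal A,\mathbf t))=\prod_{i=1}^p\Big([(1-t_i)^{-(n_i+1)}]_{0}-[(1-t_i)^{-(n_i+1)}]_{\infty}\Big).
\end{equation*}
Each single-variable factor equals $\sum_{m_i\in\IZ}\chi(\IP^{n_i},\cO_{\IP^{n_i}}(m_i))\,t_i^{m_i}$: the expansion around $0$ supplies the coefficients $\binom{n_i+m_i}{n_i}$ for $m_i\ge 0$, the subtracted expansion around $\infty$ supplies them for $m_i\le -n_i-1$, the two supports are disjoint, and in the intermediate range $-n_i\le m_i\le -1$ the polynomial $\binom{n_i+m_i}{n_i}$ vanishes---exactly the reciprocity computed for $\IP^n$ in the Remark above. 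Multiplicativity of the Euler characteristic under the K\"unneth decomposition, $\chi(\mathcal A,\cO_{\mathcal A}(m))=\prod_i\chi(\IP^{n_i},\cO_{\IP^{n_i}}(m_i))$, then turns the product into $\sum_{m}\chi(\mathcal A,\cO_{\mathcal A}(m))\,\mathbf t^{m}$.

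To pass to a complete intersection $X$, I would use that the $q$ defining sections form a regular sequence, so the twisted Koszul complex resolves $\cO_X(m)$ by the ambient bundles $\bigoplus_{|J|=k}\cO_{\mathcal A}\big(m-\sum_{j\in J}d^{(j)}\big)$. Additivity of $\chi$ along this resolution gives, for every $m\in\IZ^p$,
\begin{equation*}
\chi\big(X,\textstyle\bigotimes_i H_i^{\otimes m_i}\big)=\sum_{J\subseteq\{1,\dots,q\}}(-1)^{|J|}\,\chi\big(\mathcal A,\cO_{\mathcal A}(m-\textstyle\sum_{j\in J}d^{(j)})\big).
\end{equation*}
On the generating-function side the numerator expands as $\prod_j(1-\mathbf t^{d^{(j)}})=\sum_J(-1)^{|J|}\mathbf t^{\sum_{j\in J}d^{(j)}}$, whence $HS(X,\mathbf t)=\sum_J(-1)^{|J|}\,\mathbf t^{\sum_{j\in J}d^{(j)}}\,HS(\mathcal A,\mathbf t)$. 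Applying $\mathcal L$, invoking its linearity and commutation with monomial shifts together with the base case, and re-indexing the free sum by $m'=m+\sum_{j\in J}d^{(j)}$, collects precisely the alternating sum above as the coefficient of $\mathbf t^{m'}$, which establishes the identity.

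The main obstacle I anticipate is the rigorous bookkeeping of the formal bilateral series rather than any deep geometry: one must verify that each $[HS(X)]_\sigma$ is well defined in the mixed $0/\infty$ regime, that the distributive step factorising the $\sigma$-sum over the product for $\mathcal A$ is legitimate (it is, because each multidegree receives a single finite product as contribution), and that interchanging the finite operator $\mathcal L$ with the re-indexed infinite free sum introduces no spurious overlaps---again automatic, since only $2^p$ series are combined and in the ambient case their supports are disjoint away from the vanishing locus of the Hilbert polynomial. The remaining care lies in the sign conventions and in confirming exactness of the Koszul complex, i.e.\ that the chosen polynomials cut out $X$ as a genuine codimension-$q$ complete intersection, so that $\chi$ is the polynomial computed by Hirzebruch--Riemann--Roch for all $m\in\IZ^p$.
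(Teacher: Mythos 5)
Your argument is correct, and it does more than the paper does: Conjecture~1 is stated there without proof, supported only by the observation that the two \emph{pure} expansion points $\sigma=(0,\dots,0)$ and $\sigma=(\infty,\dots,\infty)$ reproduce $h^0$ and, via Serre duality, $(-1)^{\dim X}h^{\dim X}$, hence agree with $\chi$ for sufficiently positive and sufficiently negative multidegrees; the mixed regimes are left to explicit computational checks. Your proof closes exactly that gap. The two key moves --- (i) factorising the signed sum over $\sigma\in\{0,\infty\}^{\times p}$ into the product $\prod_i\bigl([(1-t_i)^{-(n_i+1)}]_0-[(1-t_i)^{-(n_i+1)}]_\infty\bigr)$, legitimate because $HS(\mathcal A,\mathbf t)$ is separable, and (ii) the single-variable identity $[(1-t)^{-(n+1)}]_0-[(1-t)^{-(n+1)}]_\infty=\sum_{m\in\IZ}\binom{n+m}{n}\,t^m=\sum_{m\in\IZ}\chi(\IP^n,\cO_{\IP^n}(m))\,t^m$, where the intermediate range $-n\le m\le -1$ is covered because the binomial coefficient vanishes there while all cohomology of $\cO_{\IP^n}(m)$ does too --- combine with K\"unneth multiplicativity of $\chi$ and the Koszul reduction (valid for \emph{all} $m\in\IZ^p$, since $\chi$, unlike $h^0$, is additive along the exact Koszul complex of a regular sequence) to give the identity on all of $\IZ^p$, including the mixed $0/\infty$ regimes the paper does not treat. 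The only points worth spelling out in a final write-up are the order-independence of $[HS(X)]_\sigma$ (the order of expansion prescribed in the statement is immaterial here precisely because the denominator is separable and the numerator is a Laurent polynomial, which is also what lets $\mathcal L$ commute with multiplication by $\prod_j(1-\mathbf t^{\,d^{(j)}})$) and the disjointness of supports in the single-variable identity. With those remarks in place, your argument upgrades the conjecture to a theorem rather than reproducing the paper's partial justification.
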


The conjecture may be extended to (complete intersections) in toric varieties. To discuss this, we use the homogeneous coordinate description of toric varieties~\cite{Cox1992, cox2011toric}. Let $V_{\Sigma}$ be an $n$-dimensional toric variety given by the fan $\Sigma$. Then $V_{\Sigma}= \left( \mathbb C^r\setminus Z_{\Sigma} \right)/ (\mathbb C^\ast)^{p}$~, where $p=r-n$ and $Z_{\Sigma}$ is the exceptional set fixed by a continuous subgroup of $(\mathbb C^\ast)^{p}$. The action of $(\mathbb C^\ast)^{p}$ is specified by a set of $p$ weighted scalings 
\begin{equation}
(z_1,z_2,\ldots,z_r)\rightarrow (\lambda^{q_{i,1}} z_1,\lambda^{q_{i,2}} z_2,\ldots, \lambda^{q_{i,r}} z_r)~,\qquad i = 1,\ldots, p~.
\end{equation}
The set of weights $q_{i,j}$ can be read off from the fan $\Sigma$ in the standard way. Conjecture~\ref{con:Euler} can be rephrased for complete intersections in toric varieties by replacing the Hilbert-Poincar\'e series for the ambient variety
\begin{equation}
HS( \IP^{n_1}\times\ldots\times \IP^{n_p}, t_1,\ldots,t_p) = \frac{1}{\prod_{i=1}^p \left(1- t_i \right)^{n_i+1}}
\end{equation}
with
\begin{equation}
HS(V_{\Sigma}, t_1,\ldots, t_{p}) =  \frac{1}{\prod_{i=1}^p \prod_{j=1}^r \left(1- t_i^{q_{i,j}} \right)}~,
\end{equation}
and interpreting the multi-degrees of the polynomials as weights under the $p$ scalings.

\section{Generating functions for line bundle cohomology dimensions}\label{sec:zeroth_coh}
\subsection{The Cox ring}
The Hilbert-Poincar\'e series associated with the homogeneous coordinate ring of a variety does not, in general, provide a generating function for the dimensions of the spaces of global sections of all line bundles. However, this can be achieved via the Cox ring, which plays the role of a total homogeneous coordinate ring. The concept was introduced by Hu and Keel in Ref.~\cite{Hu:2000} with the aim to generalise Cox' homogeneous coordinate ring construction for toric varieties to a broader class of varieties. 
The Cox ring of a projective variety $X$ is the graded ring consisting of all global sections of all line bundles on~$X$. More explicitly, we have the following. 

\begin{defn}
Let $X$ be a projective variety of dimension $n$ with $H^1(X,\mathcal O_X)=0$, such that $\text{Pic}^0(X)$ is trivial. Let $r$ denote the Picard number of $X$ and $L_1,L_2,\ldots L_r$ be line bundles on $X$ whose isomorphism classes form a $\mathbb Z$-basis of $\text{Pic}(X)$. The {\itshape Cox ring} is then defined as 
\begin{equation}
\text{Cox}(X) = \bigoplus_{(m_1,\ldots m_r)\in \mathbb Z^r} H^0(X,L_1^{\otimes m_1}\otimes\ldots\otimes L_r^{\otimes m_r})~,
\end{equation}
where the ring product is given by the multiplication of sections
\begin{equation}
H^0(X,L)\otimes H^0(X,L') \rightarrow H^0(X,L\otimes L')~.
\end{equation}
\end{defn}

\begin{remark}
The above definition depends on the choice of basis for ${\rm Pic}(X)$ and on the choice of particular line bundles in each isomorphism class. Nevertheless, all these choices give isomorphic rings. 
\end{remark}

\begin{example}
Let $X$ be a toric variety with free ${\rm Pic}(X)$. Then the Cox ring of $X$ coincides with its homogeneous coordinate ring. Conversely, the class of varieties whose Cox ring is polynomial precisely corresponds to toric varieties~\cite{Hu:2000}, see Ref.~\cite{Ottem} for more information. 
\end{example}

The Cox ring contains all the `coordinate rings' of $X$ as subrings in the following sense. If $L$ is a very ample divisor on $X$, defining the embedding $i:X\hookrightarrow \IP^n$, such that $ L =i^\ast \mathcal O_{\IP^n}(1)$, then 
\begin{equation}
\bigoplus_{m\in \mathbb Z}  H^0(X, L^{\otimes m}) \subset \text{Cox}(X)
\end{equation}
is a subring of $\text{Cox}(X)$ whose $\text{Proj}$ completely determines $X$. As such, the Cox ring contains information about the various projective embeddings that $X$ can admit.

The multivariate Hilbert-Poincar\'e series associated with~${\rm Cox}(X)$ is obtained by summing up the dimensions of its graded pieces and weighting them by the corresponding powers of the counting variables $t_1, t_2,\ldots t_r$:
\begin{equation}\label{eq:HSmultivar}
\begin{aligned}
HS(X,t_1,\ldots,t_r)&~ = \sum_{(m_1,\ldots, m_r)\in \mathbb Z^r} \text{dim}\,\text{Cox}(X)_{m_1,\ldots,m_r} t_1^{m_1}\ldots t_r^{m_r}\\
& ~= \sum_{(m_1,\ldots, m_r)\in \mathbb Z^r} h^0(X,L_1^{\otimes m_1}\otimes\ldots\otimes L_r^{\otimes m_r})\, t_1^{m_1}\ldots t_r^{m_r}~.
\end{aligned}
\end{equation}
In general, this is a formal Laurent series as negative exponents of the $t$-variables can appear. 
Single variable Hilbert-Poincar\'e series such as those discussed in Section~\ref{sec:HPseries} can be recovered from the expression~\eqref{eq:HSmultivar} by suitable monomial substitutions. For instance, if $L=L_1^{\otimes e_1}\otimes\ldots \otimes L_r^{e_r}$ in Eq.~\eqref{eq:HPseries}, the required substitution is $t=t_1^{e_1}\ldots t_r^{e_r}$.

\begin{example}\label{ex:Cox}
Let $X$ be a sufficiently general (e.g.~smooth or normal $\mathbb Q$-factorial) hypersurface in the product of projective spaces $\mathcal A=\IP^{n_1}\times\ldots \times \IP^{n_m}$ with $n_i\geq2$, defined by the vanishing of a homogeneous polynomial $f$. If~$D$ is a divisor on $\mathcal A$, the sequence
\begin{equation}
0\rightarrow H^0(\mathcal A, \cO(D-X))\rightarrow H^0(\mathcal A, \cO(D))\rightarrow H^0(X, \cO_X(D))\rightarrow 0
\end{equation}
is exact, since for $n_i\geq2$, $H^1(\mathcal A,L)=0$ for any line bundle $L$. It follows that the Cox ring of $X$ is the quotient 
\begin{equation}
{\rm Cox}(X) = \mathbb C[x^{(1)}_0, \ldots, x^{(1)}_{n_1},\ldots,\ldots, x^{(m)}_0,\ldots, x^{(m)}_{n_m}]/(f)~.
\end{equation}
\end{example}

\begin{defn}
$\mathbb Q$-factorial projective varieties whose Cox ring is finitely generated (for instance, any log-Fano variety is of this type~\cite{BCHM}) are called {\itshape Mori dream spaces}~\cite{Hu:2000}.  
\end{defn}

For Mori dream spaces, the cone of effective divisors is polyhedral and decomposes into finitely many convex sets called Mori chambers, each providing a birational model of the variety. More concretely, $X$ is a Mori dream space if it is a normal $\mathbb Q$-factorial projective variety, ${\rm Pic}(X)$ is finitely generated, ${\rm Nef}(X)$ is generated by the classes of finitely many semi-ample divisors, and there is a finite collection of small $\mathbb Q$-factorial modifications $\phi_i:X\dashrightarrow X_i $ such that for each $X_i$, ${\rm Nef}(X_i)$ is generated by the classes of finitely many semi-ample divisors and the movable cone of $X$ decomposes as $ {\rm Mov}(X)=\bigcup_i \phi_i^\ast {\rm Nef}(X_i)$.

The advantage of working with Mori dream spaces is that once a presentation for the Cox ring is known, this gives an embedding of $X$ into a simplicial toric variety $Y$ such that each of the modifications $\phi_i$ mentioned above is induced from a modification of the ambient toric variety $Y$, see~\cite{Hu:2000}. 
Obstructions to being a Mori dream space include non-polyherality of the effective/nef cones, as well as situations where the nef cone or the effective cone are not closed.

\subsection{Examples in dimension 2}

The birational structure of very general 
surfaces of bidegree $(d,e)$ in $\IP^1\times \IP^2$ has been described in Ref.~\cite{Ottem2015}. We focus on the following two cases: 
\begin{enumerate}
\itemsep0em
\item[$(i)$] $e=1$; in this case $X$ is a projective bundle over $\IP^1$, i.e.~a Hirzebruch surface;
\item[$(ii)$] $d=2$ and $e\geq3$; in this case, $X$ is a Mori dream space with ${\rm Pic}(X)\simeq{\rm Pic}(\IP^1\times\IP^2)\simeq\IZ^2$. The nef cone is equal to the effective cone and spanned by $H_1$ and $eH_2-H_1$, where $H_1 = \mathcal O_{\IP^1\times \IP^2}(1,0)|_X$ and $H_2 = \mathcal O_{\IP^1\times \IP^2}(0,1)|_X$.
\end{enumerate}
The other cases lead to surfaces with higher Picard rank: $(iii)$ $d=1$ and $e=2$ leads to a Picard number $5$ del Pezzo surface; $(iv)$  $d=2$ and $e=2$ leads to a del Pezzo surface with Picard number~$6$; or give non-Mori dream spaces: $(v)$ $d=1$ and $e\geq 3$ leads to a rational surface with infinitely many $(-1)$-curves; $(vi)$ $d\geq 3$ and $e\geq 2$ leads to a surface whose effective cone is not closed.

\subsubsection{Hirzebruch surfaces}\label{sec:Hirzebruch}

Let $\mathbb F_n$ be the $n$-th Hirzebruch surface $\IP(\cO_{\IP^1}\oplus \cO_{\IP^1}(n))$. 
Its Picard group can be written as ${\rm Pic}({\mathbb F_n}) = \mathbb Z C\oplus \mathbb Z F$, where $C$ is the unique irreducible curve with negative self intersection $C^2=-n$ and $F$ corresponds to the fiber with $F^2=0$, $F\cdot C = 1$. The nef cone is $${\rm Nef}({\mathbb F_n})=\IR_{\geq 0} F +\IR_{\geq 0} (nF+C)~,$$ while the effective cone is given by  $${\rm Eff}({\mathbb F_n})=\IR_{\geq 0} F +\IR_{\geq 0} C~.$$ 

The anticanonical class is $-K_{\mathbb F_n} = 2C+(n+2)F$. Using the Hirzebruch-Riemann-Roch formula, the Euler characteristic of a line bundle $\cO_{\mathbb F_n}(D = m_1 C+m_2F)$ is given by
\begin{equation}
\begin{aligned}
\chi({\mathbb F_n},\cO_{\mathbb F_n}(D)) &= \frac{1}{2}(D\cdot D - D\cdot K) + \chi({\mathbb F_n},\cO_{\mathbb F_n}) \\
& =\frac{1}{2}\left(-nm_1^2+2m_1m_2+(2-n)m_1+2m_2 \right)+1~.
\end{aligned}
\end{equation}

Following Ref.~\cite{Brodie:2019ozt}, the zeroth line bundle cohomology function on ${\mathbb F_n}$ can be written in closed form as 
\begin{equation}\label{coh0_Hirzebruch}
h^0({\mathbb F_n},\cO_{\mathbb F_n}(D))=\begin{cases}
\chi({\mathbb F_n},\cO_{\mathbb F_n}(D)) ~,\text{ if } D\in \text {Nef}({\mathbb F_n})\\[4pt]
\chi({\mathbb F_n},\displaystyle \cO_{\mathbb F_n}(D-\ceil{\frac{D\cdot C}{C\cdot C}}C))~,\text{ if } D\in\text{Eff}({\mathbb F_n})\setminus \text {Nef}({\mathbb F_n})~,\\
\end{cases}
\end{equation}
where $\ceil{D\cdot C/C\cdot C}= \ceil{m_1-m_2/n}$ when $D=m_1C+m_2F$ and the ceiling function is defined by $\lceil x\rceil =\min\{p\in \mathbb {Z} \mid p\geq x\}$. For illustration, Figure~\ref{dP1_h0} lists the values of the zeroth cohomology function on $\mathbb F_1$ for line bundles $m_1C+m_2F$ with $-3\leq m_1\leq 4$ and $-1\leq m_2\leq 7$.

\begin{figure}[htbp]
\begin{center}
     \includegraphics[width=7.1cm]{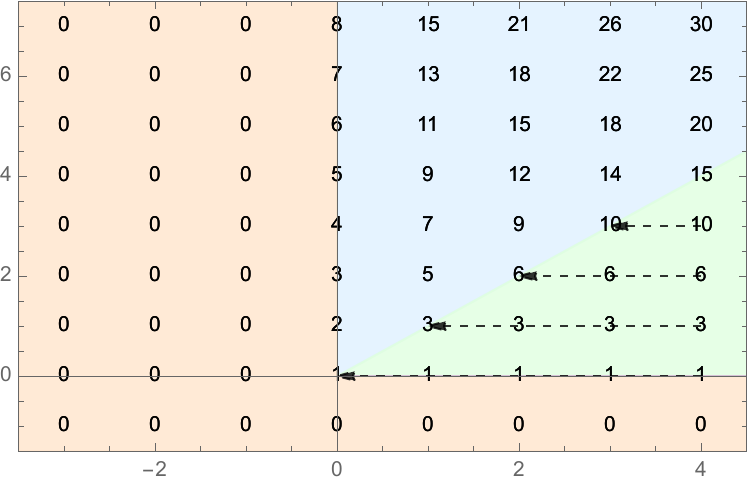}~~~~~
          \includegraphics[width=7.1cm]{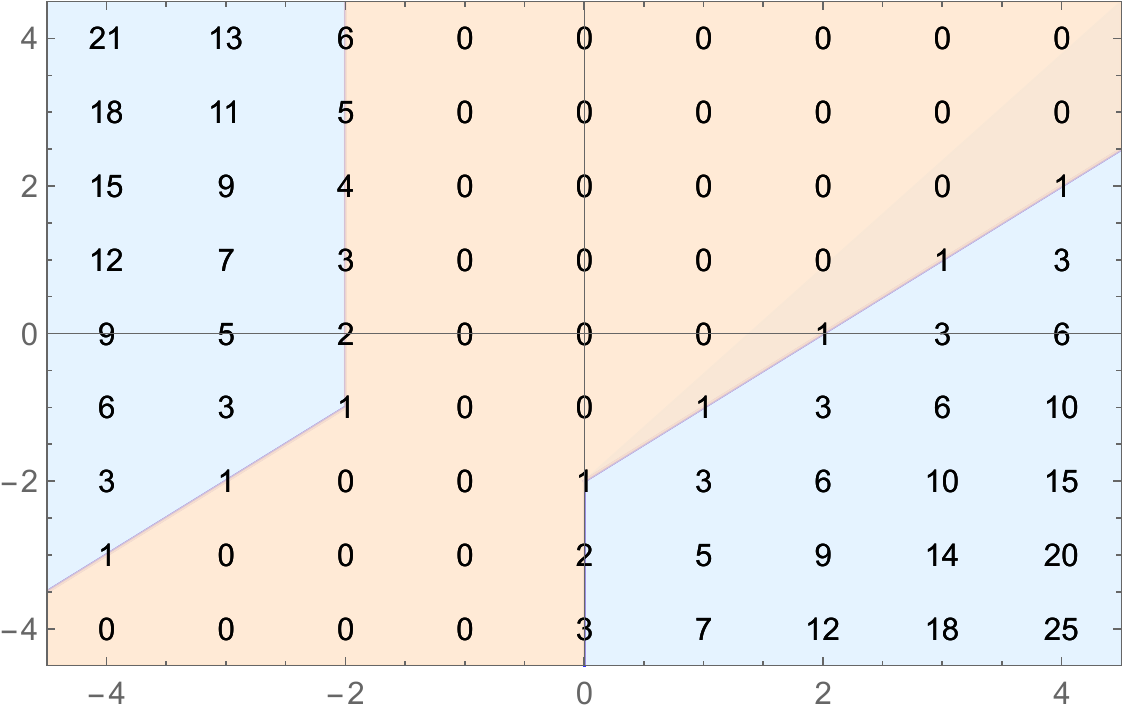}
\caption{\itshape Zeroth and first line bundle cohomology data (left plot and, respectively, right plot) for the Hirzebruch surface $\mathbb F_1$. The numbers indicate cohomology dimensions; the location in the plot indicates the first Chern class.}\label{dP1_h0}
\end{center}
\end{figure}

The top line bundle cohomology follows by Serre duality, 
\begin{equation}\label{coh2_Hirzebruch}
h^2({\mathbb F_n},\cO_{\mathbb F_n}(D))= h^0({\mathbb F_n},\cO_{\mathbb F_n}(-D+K))~,
\end{equation}
while the first cohomology follows from the Hirzebruch-Riemann-Roch formula
\begin{equation}\label{coh1_Hirzebruch}
h^1({\mathbb F_n},\cO_{\mathbb F_n}(D))= h^0({\mathbb F_n},\cO_{\mathbb F_n}(D))+h^0({\mathbb F_n},\cO_{\mathbb F_n}(-D+K))-\chi({\mathbb F_n},\cO_{\mathbb F_n}(D))~.
\end{equation}

\begin{proposition}
The middle and the zeroth line bundle cohomology functions on the $n$-th Hirzebruch surface are related by
\begin{equation}\label{coh0_coh1_Hirzebruch}
\begin{aligned}
h^1({\mathbb F_n},\cO_{\mathbb F_n}(m_1 C+m_2 F)) &= h^0({\mathbb F_n},\cO_{\mathbb F_n}(m_1 C+(n m_1-m_2-2) F))\\
 &+ h^0({\mathbb F_n},\cO_{\mathbb F_n}((-m_1-2) C+(-n m_1+m_2-n) F)) ~,
\end{aligned}
\end{equation}
where $C$ is the unique irreducible curve with negative self intersection $C^2=-n$ and $F^2=0$, $F\cdot C = 1$
\end{proposition}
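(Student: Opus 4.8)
The plan is to start from the relation \eqref{coh1_Hirzebruch}, namely $h^1(D) = h^0(D) + h^0(-D+K) - \chi(D)$ with $D = m_1 C + m_2 F$ and $K = K_{\mathbb F_n} = -2C - (n+2)F$, and to recognise the two terms on the right-hand side of the proposition as the images of $D$ and of $-D+K$ under a single reflection. Writing $\phi(m_1 C + m_2 F) = m_1 C + (n m_1 - m_2 - 2)F$, a direct substitution (using $-D+K = (-m_1-2)C + (-m_2-n-2)F$) shows that $\phi(D)$ is the first target class $m_1 C + (n m_1 - m_2 - 2)F$ and $\phi(-D+K)$ is the second, $(-m_1-2)C + (-n m_1 + m_2 - n)F$. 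Hence the proposition is equivalent to the identity $g(D) + g(-D+K) = \chi(D)$, where $g(E) := h^0(E) - h^0(\phi E)$.

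The engine of the argument is the fibrewise formula for $h^0$ coming from the ruling $\pi\colon \mathbb F_n \to \IP^1$ with $\mathbb F_n = \IP(\cO_{\IP^1}\oplus\cO_{\IP^1}(n))$. For $m_1 \geq 0$ one has $R^1\pi_*\cO(m_1 C + m_2 F)=0$ and $\pi_*\cO(m_1 C + m_2 F) = \bigoplus_{k=0}^{m_1}\cO_{\IP^1}(m_2 - kn)$, so by Leray $H^i(\mathbb F_n, m_1 C + m_2 F) \cong \bigoplus_{k=0}^{m_1} H^i(\IP^1, \cO_{\IP^1}(m_2 - kn))$; this decomposition is pinned down by matching the Euler characteristic already recorded above (summing $\chi(\IP^1,\cO(m_2-kn))$ over $k$ reproduces the stated $\chi(\mathbb F_n, D)$). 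Writing $d_k = m_2 - kn$, we get $h^0(E) = \sum_k \max(0, d_k+1)$, while $\phi$ fixes the $C$-coefficient and exchanges the multiset of fibre degrees by the fibrewise Serre duality $d \mapsto -d-2$, so that $h^0(\phi E) = \sum_k \max(0, -d_k-1)$. The elementary identity $\max(0,x+1) - \max(0,-x-1) = x+1$ then yields, for $m_1 \geq 0$,
\begin{equation*}
g(m_1 C + m_2 F) = \sum_{k=0}^{m_1}\big(\max(0, m_2 - kn+1) - \max(0, kn - m_2 - 1)\big) = \sum_{k=0}^{m_1}(m_2 - kn + 1) = \chi(\mathbb F_n, m_1 C + m_2 F).
\end{equation*}

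To conclude I would combine this with the behaviour of $g$ for negative $C$-coefficient. Since ${\rm Eff}(\mathbb F_n) = \IR_{\geq 0}C + \IR_{\geq 0}F$, any class with negative $C$-coefficient is non-effective and so has $h^0 = 0$; as $\phi$ preserves the $C$-coefficient, this gives $g(E)=0$ whenever the $C$-coefficient of $E$ is $\leq -1$. Now the $C$-coefficients of $D$ and $-D+K$ are $m_1$ and $-m_1-2$, summing to $-2$, so at most one of them is nonnegative. If $m_1 \geq 0$ then $g(D)=\chi(D)$ and $g(-D+K)=0$; if $m_1 \leq -2$ then $g(D)=0$ and $g(-D+K)=\chi(-D+K)=\chi(D)$ by the Serre symmetry $\chi(K-E)=\chi(E)$ on a surface; in either case $g(D)+g(-D+K)=\chi(D)$. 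The only genuinely separate point is the boundary $m_1 = -1$, where both terms vanish and one checks directly from the $\chi$ formula that $\chi(-C+m_2 F)=0$, so the identity again holds. The main obstacle is purely bookkeeping: making the fibrewise multiset exchange $d_k \mapsto -d_k-2$ and the three $F$-coefficient shifts line up exactly, and treating the $m_1=-1$ boundary without an off-by-one slip; everything else is elementary once the ruling decomposition is in hand.
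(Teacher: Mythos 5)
Your proof is correct, and it takes a genuinely different route from the paper's. The paper proves the proposition by ``direct computation'' from the closed-form zeroth cohomology function~\eqref{coh0_Hirzebruch} (the Zariski-type formula with the ceiling shift $D\mapsto D-\lceil D\cdot C/C^2\rceil C$), combined with Serre duality~\eqref{coh2_Hirzebruch} and Riemann--Roch~\eqref{coh1_Hirzebruch}; the verification is a case analysis over the chambers of the effective cone. You share the same starting point $h^1(D)=h^0(D)+h^0(K-D)-\chi(D)$, but you replace the ceiling-function bookkeeping by the pushforward along the ruling, $\pi_*\cO(m_1C+m_2F)=\bigoplus_{k=0}^{m_1}\cO_{\IP^1}(m_2-kn)$, and you organise the whole identity around the observation that both target classes in the proposition are images of $D$ and $K-D$ under one reflection $\phi$, which acts on the multiset of fibre degrees as the $\IP^1$ Serre involution $d\mapsto -d-2$. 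This reduces everything to $\max(0,d+1)-\max(0,-d-1)=d+1$ summed over fibre degrees, i.e.\ to the statement $g(E)=\chi(E)$ for $m_1\ge 0$ and $g(E)=0$ otherwise, and your three cases ($m_1\ge 0$, $m_1\le -2$, and the boundary $m_1=-1$ where $\chi(-C+m_2F)=0$) are all checked correctly. What your approach buys is self-containedness (no reliance on the quoted closed-form $h^0$ formula from the earlier reference) and a conceptual explanation of \emph{why} the right-hand side of~\eqref{coh0_coh1_Hirzebruch} has the form it does; what the paper's approach buys is brevity and uniformity with the surrounding toolkit of closed-form cohomology functions. The only point worth tightening is the identification of the sign convention in the pushforward ($m_2-kn$ versus $m_2+kn$): matching the Euler characteristic alone determines it here, but the cleaner pin-down is $h^0(\mathbb F_n,\cO(C))=1$ for $n>0$, which rules out the opposite convention immediately.
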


\begin{proof}
The relation follows by direct computation using Eqns.~\eqref{coh0_Hirzebruch}, \eqref{coh2_Hirzebruch} and \eqref{coh1_Hirzebruch}.
\end{proof}
\begin{remark}
Formula~\eqref{coh0_coh1_Hirzebruch} is self-dual under Serre duality, as expected. Note also that at most one of the two terms can be non-zero for any given pair of integers $(m_1,m_2)$, which follows from the convexity of the effective cone and the nefness of the anticanonical class. 
\end{remark}

The Hilbert-Poincar\'e series associated with the Cox ring of $\mathbb F_n$ can be constructed from the toric data.  The fan contains four rays, as shown below. The divisor classes of $C$ and $F$ correspond to the rays $v_4$ and, respectively,~$v_1$.
\vspace{-12pt}
\begin{center}
\begin{tikzpicture}
\node at (1.2,0) {$v_1$};
\node at (0,1.2) {$v_2$};
\node at (-1.2,-2.2) {$v_3$};
\node at (0,-1.2) {$v_4$};
\draw[->] (0,0) -- (1,0);
\draw[->] (0,0) -- (0,1);
\draw[->] (0,0) -- (-1,-2);
\draw[->] (0,0) -- (0,-1);
\node at (7,0.8){$v_1 = (1,0),~v_2 = (0,1)$};
\node at (7,0){$v_3=(-1,-n),~v_4=(0,-1)$};
\node at (7,-.8){$v_2+v_4=0$};
\node at (7,-1.6){$v_1+nv_2+v_3=0$};
\end{tikzpicture}
\end{center}
The rays $v_i$ can be associated with coordinates $z_i$ with a weight system
\begin{equation}\label{eq:Fnweights}
\begin{array}{cccc}
z_1 & z_2& z_3& z_4\\
\hline
0& 1&0& 1\\
1& n&1&0
\end{array}
\end{equation}
From this, the Hilbert-Poincar\'e series associated with the Cox ring of $\mathbb F_n$ reads 
\begin{equation}\label{eq:HS0Fn}
\begin{aligned}
HS(\mathbb F_n, t_1,t_2) = \left(\frac{1}{(1-t_1 t_2^n)(1-t_2)^2(1-t_1)},\begin{array}{cc} t_1& t_2 \\0&0 \end{array}\right)~.
\end{aligned}
\end{equation}
As the homogeneous coordinate ring is the Cox ring, this gives the zeroth line bundle cohomology series relative to the basis $\{C,F\}$ of ${\rm Pic}(\mathbb F_n)$:
\begin{equation}\label{eq:HS0Fn}
\begin{aligned}
CS^0(\mathbb F_n, \mathcal O_{\mathbb F_n}))= HS(\mathbb F_n, t_1,t_2)~.
\end{aligned}
\end{equation}
We now turn to the higher cohomology dimensions. 
 
 \begin{thm} ${\rm(Theorem~2)}$ \label{thm:Hirz}
Let $\mathbb F_n$ be the $n$-th Hirzebruch surface 
with Picard group  ${\rm Pic}({\mathbb F_n}) = \mathbb Z C\oplus \mathbb Z F$, where $C$ is the unique irreducible curve with negative self intersection $C^2=-n$ and $F$ corresponds to the fiber with $F^2=0$, $F\cdot C = 1$. 
The cohomology dimensions of all line bundles in ${\rm Pic}({\mathbb F_n})$ are encoded by the following cohomology series relative to the basis $\{C,F\}$:
\begin{equation}
\begin{aligned}
CS^0(\mathbb F_n,\mathcal O_{\mathbb F_n})&{=} \left(\frac{1}{(1-t_1 t_2^n)(1-t_2)^2(1-t_1)},\begin{array}{cc} t_1& t_2 \\0&0 \end{array}\right)\\
CS^1(\mathbb F_n,\mathcal O_{\mathbb F_n})&{=} \left(\frac{1}{(1-t_1 t_2^n)(1-t_2)^2(1-t_1)},\begin{array}{cc} t_1& t_2 \\ \infty&0 \end{array}\right) +\left(\frac{1}{(1-t_1 t_2^n)(1-t_2)^2(1-t_1)},\begin{array}{cc} t_1& t_2 \\0& \infty\end{array}\right)\\
CS^2(\mathbb F_n,\mathcal O_{\mathbb F_n})&{=} \left(\frac{1}{(1-t_1 t_2^n)(1-t_2)^2(1-t_1)},\begin{array}{cc} t_1& t_2 \\\infty&\infty \end{array}\right)  
\end{aligned}
\end{equation}
\end{thm}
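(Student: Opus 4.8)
\emph{Proof strategy.} The zeroth series $CS^0$ is already identified with the Cox-ring Hilbert--Poincar\'e series in \eqref{eq:HS0Fn}, so the entire content is to show that the \emph{same} rational function $\Phi(t_1,t_2) = \frac{1}{(1-t_1t_2^n)(1-t_2)^2(1-t_1)}$, expanded around the remaining three corners of $\{0,\infty\}^{\times 2}$, reproduces $CS^2$ and $CS^1$. My plan is to reduce each higher series to $CS^0$ using the cohomological relations already established for $\mathbb F_n$ — Serre duality \eqref{coh2_Hirzebruch} and the closed form \eqref{coh0_coh1_Hirzebruch} expressing $h^1$ as a sum of two $h^0$ terms — and then to match the resulting reparametrised copies of $CS^0$ with the corner expansions of $\Phi$ by means of a handful of reflection identities satisfied by $\Phi$.

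For $CS^2$ I would start from Serre duality, $h^2(\mathbb F_n,\mathcal O(m_1C+m_2F)) = h^0(\mathbb F_n,\mathcal O((-m_1-2)C+(-m_2-n-2)F))$, which exhibits $CS^2$ as the image of $CS^0$ under the reflection $m\mapsto K_{\mathbb F_n}-m$, with $K_{\mathbb F_n}=-2C-(n+2)F$. On generating functions this reflection is implemented by the elementary identity $\Phi(t_1^{-1},t_2^{-1}) = t_1^{2}t_2^{n+2}\,\Phi(t_1,t_2)$, whose prefactor is exactly $t_1^{-K_1}t_2^{-K_2}$. Reindexing turns $CS^2$ into $t_1^{-2}t_2^{-n-2}\,\Phi(t_1^{-1},t_2^{-1})$ expanded in positive powers of $t_1^{-1},t_2^{-1}$, i.e. the expansion of $\Phi$ around $(\infty,\infty)$; since the prefactor $t_1^2t_2^{n+2}$ carries a positive sign, this matches the absence of a sign in the stated $CS^2$.

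For $CS^1$ I would feed \eqref{coh0_coh1_Hirzebruch} into $CS^1=\sum_m h^1(\cdot)\,t_1^{m_1}t_2^{m_2}$, splitting it as $P_1+P_2$, one generating function per $h^0$ term. Each summand is a copy of $CS^0$ precomposed with an affine involution of the exponent lattice: $P_1$ uses $m\mapsto (m_1,\,nm_1-m_2-2)$ and $P_2$ uses $m\mapsto(-m_1-2,\,-nm_1+m_2-n)$, both squaring to the identity. Reindexing by the effective class $E$ on which the inner $h^0$ is supported gives $P_1=t_2^{-2}\sum_E h^0(\mathcal O(e_1C+e_2F))(t_1t_2^n)^{e_1}(t_2^{-1})^{e_2}$ and an analogous expression for $P_2$. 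The reflection identities $\Phi(t_1t_2^n,t_2^{-1})=t_2^2\,\Phi(t_1,t_2)$ and $\Phi(t_1^{-1}t_2^{-n},t_2)=t_1^2t_2^n\,\Phi(t_1,t_2)$ then cancel the monomial prefactors and identify $P_1$ with the expansion of $\Phi$ around $(0,\infty)$ and $P_2$ with the expansion around $(\infty,0)$, giving $CS^1=P_1+P_2$ precisely as stated. These two involutions are the reflections in the two walls of $\mathrm{Nef}(\mathbb F_n)$.

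The main obstacle is the step I glossed over: that substituting a monomial change of variables into the $(0,0)$-expansion of $\Phi$ genuinely reproduces the Laurent expansion around the intended corner, and not some other expansion of the same rational function. This is where the order-dependence of multivariate Laurent expansion must be controlled. I would handle it through uniqueness of cone-supported expansions: a rational function admits at most one formal Laurent expansion whose support lies in a fixed pointed translated cone, so it suffices to check that each reparametrised series is supported in the cone attached to the corresponding corner. Concretely, one verifies that the involutions above carry $\mathrm{Eff}(\mathbb F_n)$ onto the two disconnected cones $-2C+\IR_{\geq -1}F+\IR_{\leq 0}(C+F)$ and $\IR_{\leq -2}F+\IR_{\geq 0}(C+F)$ supporting $h^1$, and that these are exactly the convergence regions of the $(0,\infty)$ and $(\infty,0)$ expansions. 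The alternative — the route the paper follows — is to compute all $H^i(\mathbb F_n,\mathcal O(D))$ directly from the toric \v{C}ech/local-cohomology complex of the Cox ring, where the $(\mathbb C^\ast)^2$-grading makes each corner and its cohomological degree (the number of $\infty$'s) manifest; there the difficulty is tracking \v{C}ech representatives, which is exactly what obstructs any generalisation beyond the toric setting.
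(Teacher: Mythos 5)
Your proof is correct, but it takes a genuinely different route from the paper's. The paper proves Theorem 2 by direct enumeration of \v{C}ech cohomology representatives: from the Stanley--Reisner ideal $\langle z_1z_3,z_2z_4\rangle$ it lists the ``rationoms'' contributing to each $H^i$, writes down the rational generating function counting them, and then rearranges those rational functions (using e.g.\ $t_1^{-2}t_2^{-n}(1-t_2)^{-2}(1-t_1^{-1}t_2^{-n})^{-1}(1-t_1^{-1})^{-1}=\Phi$ expanded with $t_1$ large) to identify them with the corner expansions of the Hilbert--Poincar\'e series. You instead take as input the closed-form cohomology relations already established earlier in the section --- Serre duality for $h^2$ and the proposition expressing $h^1$ as a sum of two reflected $h^0$'s --- reindex the resulting series, verify the functional equations $\Phi(t_1^{-1},t_2^{-1})=t_1^2t_2^{n+2}\Phi$, $\Phi(t_1t_2^n,t_2^{-1})=t_2^2\Phi$, $\Phi(t_1^{-1}t_2^{-n},t_2)=t_1^2t_2^n\Phi$ (all of which I checked and which do hold), and then invoke uniqueness of Laurent expansions supported in a translate of a pointed cone to pin down which corner each reparametrised copy of $CS^0$ is. That last step is the right tool and closes the only real gap: both the reindexed series and the iterated corner expansion are supported in the same translate of a pointed cone (e.g.\ $-2F+\IR_{\geq 0}(C+nF)+\IR_{\geq 0}(-F)$ for the $(0,\infty)$ term) and both multiply the denominator to $1$, so they coincide. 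What each approach buys: the paper's toric local-cohomology argument explains structurally why the cohomological degree equals the number of $\infty$'s in the expansion point, but, as you note, it is confined to the toric setting; your argument presupposes the closed-form formulae \eqref{coh0_Hirzebruch}--\eqref{coh0_coh1_Hirzebruch} but would transfer to any variety on which $h^i$ is known to be a finite sum of $h^0$'s of divisors obtained by lattice involutions (e.g.\ surfaces with a Zariski-chamber description), and your treatment of the order-of-expansion issue is more rigorous than the paper's informal ``has the same expansion provided the $t_1$ expansion is carried out first''. One cosmetic point: the two $h^1$-support cones you quote from the remark after Theorem 1 are stated there for $\mathbb F_1$ (they involve $C+F$ rather than $C+nF$); your own computation of the supports via the involutions is the one to trust for general $n$.
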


\begin{proof}
The Stanley-Reisner ideal of $\mathbb F_n$ is generated by $\langle z_1z_3,z_2z_4 \rangle$. Following the algorithm in Ref.~\cite{CohomOfLineBundles:Algorithm} (which forms the basis of the \texttt{cohomCalg} package~\cite{cohomCalg:Implementation}), zeroth cohomology representatives are in 1-to-1 correspondence with monomials of the form $z_1^{p_1}z_2^{p_2}z_3^{p_3}z_4^{p_4}$. Since the product $$\prod_{i=1}^4\frac{1}{1-z_i} = \sum z_1^{p_1}z_2^{p_2}z_3^{p_3}z_4^{p_4}$$ contains all the monomials in $z_1,z_2,z_3,z_4$ counted exactly once and given the toric weights~\eqref{eq:Fnweights}, by replacing $z_1\rightarrow t_2$, $z_3\rightarrow t_2$, $z_2\rightarrow t_1t_2^n$ and $z_4\rightarrow t_1$, Eq.~\eqref{eq:HS0Fn} follows.

The first cohomology representatives are in 1-to-1 correspondence with Laurent monomials of the form
\begin{equation}
\frac{z_1^{p_1}z_3^{p_3}}{z_2z_4 \cdot z_2^{p_2}z_4^{p_4}}~,~ \frac{z_2^{p_2}z_4^{p_4}}{z_1z_3 \cdot z_1^{p_1}z_3^{p_3}}
\end{equation}
with $p_i\geq 0$. The generating function that counts these monomials is 
\begin{equation}\label{GFnH1}
G(\mathbb F_n,t_1,t_2,t_1^{-1},t_2^{-1}) = t_1^{-2}t_2^{-n} \frac{1}{(1-t_2)^2}\frac{1}{(1-t_1^{-1}t_2^{-n})(1-t_1^{-1})} +t_2^{-2} \frac{1}{(1-t_2^{-1})^2}\frac{1}{(1-t_1t_2^{n})(1-t_1)} 
\end{equation}
where each fraction is to be expanded separately with either $t_1, t_2$ or, respectively, $t_1^{-1},t_2^{-1}$ small, followed by the multiplication of the corresponding series. This expression can be rearranged as follows. Concentrating on the first term, 
\begin{equation}
t_1^{-2}t_2^{-n} \frac{1}{(1-t_2)^2}\frac{1}{(1-t_1^{-1}t_2^{-n})(1-t_1^{-1})} = \frac{1}{(1-t_2)^2}\frac{1}{(1-t_1 t_2)(1-t_1)}~,
\end{equation}
where the first fraction is to be expanded around $t_2=0$ and the second fraction around $t_1=t_2=\infty$. However, the second fraction has the same expansion around $t_1=\infty$ and $t_2=0$, provided that the $t_1$ expansion is carried out first. With this, the first term in \eqref{GFnH1} corresponds to the Hilbert-Poincar\'e series 
\begin{equation}
 \frac{1}{(1-t_1 t_2^n)(1-t_2)^2(1-t_1)} 
\end{equation}
expanded around $t_1=\infty, t_2=0$. Similar arguments lead to the conclusion that the second term in~\eqref{GFnH1} corresponds to the Hilbert-Poincar\'e series expanded around $t_1=0, t_2=\infty$ (in this order). 

As such, the first line bundle cohomology series on $\mathbb F_n$ is given by the generating function 
\begin{equation}
CS^1(\mathbb F_n, \cO_{\mathbb F_n}) = HS\left(\mathbb F_n,\begin{array}{cc} t_1& t_2 \\ \infty&0 \end{array}\right)  + HS\left(\mathbb F_n,\begin{array}{cc} t_1& t_2 \\ 0& \infty \end{array}\right) ~.
\end{equation}

Finally, the top cohomology representatives are in 1-to-1 correspondence with Laurent monomials of the form 
\begin{equation}
\frac{1}{z_1z_2z_3z_4\cdot z_1^{p1}z_2^{p2}z_3^{p3}z_4^{p4}}~,
\end{equation}
which, following arguments similar to the above, are counted by the cohomology series
\begin{equation}
CS^2(\mathbb F_n,\cO_{\mathbb F_n}) = HS\left(\mathbb F_n,\begin{array}{cc} t_1& t_2 \\ \infty&\infty \end{array}\right)  ~.
\end{equation}
\end{proof}

\subsubsection{Surfaces of bidegree $(2,e\geq 3)$ in $\IP^1\times \IP^2$}\label{hypersurfacesP1P2}
For a very general hypersurface $X$ of bidegree $(2,e\geq 3)$ in $\IP^1\times \IP^2$, the Noether-Lefschetz theorem guarantees that ${\rm Pic}(X)\simeq{\rm Pic}(\IP^1 \times \IP^2)\simeq \IZ_2$. The case $e=3$ corresponds to a K3 surface with Picard number $2$, while the cases $e>3$ correspond to surfaces of general type.

The nef cone of $X$ has been shown in Ref.~\cite{Ottem2015} to be 
\begin{equation}
{\rm Nef}(X)={\rm Eff}(X)= \IR_{\geq 0} H_1 + \IR_{\geq0}(eH_2-H_1)~,
\end{equation}
where $H_1 = \mathcal O_{\IP^1\times \IP^2}(1,0)|_X$ and $H_2 = \mathcal O_{\IP^1\times \IP^2}(0,1)|_X$. Note that this is strictly larger than the nef cone of the ambient variety $\IP^1\times \IP^2$. However, $X$ admits an embedding as a complete intersection in a toric variety for which the nef cone descends from the ambient variety. 

To describe this embedding, suppose that $X\subset \IP^1\times \IP^2$ is cut out by the vanishing of the bihomogeneous polynomial
$$ f = x_0^2 f_0+x_0x_1 f_1+x_1^2 f_2~,$$
where $x_0, x_1$ are the coordinates on $\IP^1$ and $f_0, f_1, f_2$ are homogeneous polynomials of degree $e\geq 3$ in the coordinates $y_0, y_1, y_2$ of $\IP^n$. Consider the toric variety defined by the weight system
\begin{equation}
\begin{array}{ccccccc}
x_0&x_1&y_0&y_1&y_2&z_1 & z_2\\
\hline
1 &1 &0 &0 &0 &\!\!-1 &\!\!-1\\
0 &0 &1 &1 &1 &~e &~e
\end{array}
\end{equation}

Ottem has shown in Ref.~\cite{Ottem2015} that $X$ can be described as the complete intersection of three hypersurfaces in this toric variety, from which it inherits the nef and effective cones. This results in the following presentation of the Cox ring of $X$
\begin{equation}\label{coxringpres_surf}
{\rm Cox}(X)=\mathbb C[x_0,x_1,y_0,y_1,y_2,z_1,z_2]/I~,
\end{equation}
where $I=(f_0+x_1z_1,f_1-x_0z_1+x_1z_2,f_2-x_0z_2)$, which gives the generating function for the zeroth line bundle cohomology. 

\begin{proposition}
The multivariate Hilbert-Poincar\'e series associated with the Cox ring~\eqref{coxringpres_surf} corresponds to the rational function
\begin{equation}
HS(X,t_2,t_1)=\frac{(1-t_2^{e})^{3}}{(1-t_1)^2(1-t_2)^{3}(1-t_1^{-1}t_2^{e})^2}~.
\end{equation}
When expanded around $t_2=0$ and $t_1=0$ (in this order), the generating function gives the zeroth line bundle cohomology dimensions:
\begin{equation}
HS(X, t_2,t_1)=\sum_{m_1,m_2\in\mathbb Z} h^0(X,\cO_X(m_1 H_1+m_2 H_2)) t_1^{m_1}t_2^{m_2}~.
\end{equation}
\end{proposition}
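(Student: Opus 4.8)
The plan is to read off the Hilbert--Poincar\'e series directly from the complete intersection description recalled above, so that the whole computation reduces to a Koszul argument. Write $S=\mathbb C[x_0,x_1,y_0,y_1,y_2,z_1,z_2]$, graded by $\text{Pic}(X)\cong\IZ^2$ via the displayed weight system; this $S$ is the Cox ring of the ambient toric variety $Y$, and by Ottem's theorem~\cite{Ottem2015} one has $\text{Cox}(X)=S/I$ with $I=(f_0+x_1z_1,\,f_1-x_0z_1+x_1z_2,\,f_2-x_0z_2)$. First I would record the multigraded Hilbert series of $S$ as the product over its seven generators of factors $(1-t_1^{a}t_2^{b})^{-1}$, where $(a,b)$ is the weight of the generator. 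Reading off the weight system, $x_0,x_1$ carry weight $t_1$, the $y_j$ carry weight $t_2$, and $z_1,z_2$ carry weight $t_1^{-1}t_2^{e}$, giving
\begin{equation*}
HS(Y,t_2,t_1)=\frac{1}{(1-t_1)^2\,(1-t_2)^3\,(1-t_1^{-1}t_2^{e})^2}~.
\end{equation*}

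Next I would obtain the numerator from the relations. Each of the three generators of $I$ is homogeneous of multidegree $(0,e)$: indeed $f_0,f_1,f_2$ have degree $e$ in the $y$-variables, hence weight $t_2^{e}$, while $x_iz_j$ has weight $t_1\cdot t_1^{-1}t_2^{e}=t_2^{e}$, so every monomial in each relation matches. Because $X$ is a genuine codimension-three complete intersection in $Y$, these three elements form a regular sequence in $S$, so the associated Koszul complex is a multigraded free resolution of $S/I$ over $S$. Additivity of the Hilbert series along this resolution multiplies $HS(Y,t_2,t_1)$ by $\prod_{j=1}^{3}(1-t_2^{e})=(1-t_2^{e})^3$, yielding
\begin{equation*}
HS(X,t_2,t_1)=\frac{(1-t_2^{e})^{3}}{(1-t_1)^2(1-t_2)^{3}(1-t_1^{-1}t_2^{e})^2}~,
\end{equation*}
which is the claimed rational function.

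Finally I would settle the expansion convention and the cohomological reading. The only factor with a negative exponent is $(1-t_1^{-1}t_2^{e})^{-2}$, which I expand as a geometric series in the monomial $t_1^{-1}t_2^{e}$; since each successive power carries a strictly higher power of $t_2$, for any fixed multidegree only finitely many monomials contribute, so the graded pieces of $S/I$ are finite dimensional and the formal Laurent series is well defined in the order ``$t_2=0$ then $t_1=0$'' indicated in the statement. The cohomological interpretation is then immediate from the definition of the Cox ring: since $\text{Cox}(X)=\bigoplus_{(m_1,m_2)}H^0(X,\cO_X(m_1H_1+m_2H_2))$ with $t_i$ tracking the $H_i$-degree, the coefficient of $t_1^{m_1}t_2^{m_2}$ equals $h^0(X,\cO_X(m_1H_1+m_2H_2))$. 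The substantive input is Ottem's theorem, which supplies both the presentation $\text{Cox}(X)=S/I$ and the regular-sequence property; granting that, the remaining work is the degree bookkeeping above, and I expect the only genuine care to lie in checking that the prescribed order of expansion renders every graded dimension finite, so that the additivity step and the final identification are legitimate.
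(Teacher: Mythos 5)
Your proposal is correct and is essentially the paper's argument: the paper's proof is the one-line remark that the result ``follows directly from the presentation \eqref{coxringpres_surf} of the Cox ring,'' and your Koszul/regular-sequence computation with the weight bookkeeping (denominator from the seven generators, numerator $(1-t_2^e)^3$ from the three relations of degree $(0,e)$) is exactly the calculation that remark leaves implicit. Your additional care about the expansion order making each graded piece finite is a worthwhile detail the paper omits, but it does not change the route.
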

\begin{proof}
The result follows directly from the presentation \eqref{coxringpres_surf} of the Cox ring.
\end{proof}

\begin{con}
The same rational function encodes the first and second cohomology dimensions:
\begin{equation}
\begin{aligned}
CS^1(X,\cO_X) & = HS \left( X\,,\!\begin{array}{cc} t_2& t_1 \\ \infty&0 \end{array}\!\!\right) + HS \left( X\,,\!\begin{array}{cc} t_2& t_1 \\ 0&\infty \end{array}\!\!\right)  =\sum_{m_1,m_2\in\mathbb Z} h^1(X,\cO_X(m_1 H_1+m_2 H_2)) t_1^{m_1}t_2^{m_2}~\\[4pt]
CS^2(X,\cO_X) & =HS \left( X\,,\!\begin{array}{cc} t_2& t_1 \\ \infty&\infty \end{array}\!\!\right)=\sum_{m_1,m_2\in\mathbb Z} h^{2}(X,\cO_X(m_1 H_1+m_2 H_2)) t_1^{m_1}t_2^{m_2}~,
\end{aligned}
\end{equation}
\end{con}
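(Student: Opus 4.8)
The plan is to treat the three series separately, exploiting that $X$ is a surface, so only $h^0,h^1,h^2$ occur and they are tied together by Serre duality and Riemann--Roch. The series $CS^0$ is exactly the content of the preceding Proposition, so nothing is left to do there. For $CS^2$ I would use Serre duality. Adjunction gives $K_X=(K_{\IP^1\times\IP^2}+X)|_X=\cO_X(0,e-3)=(e-3)H_2$, so that $h^2(\cO_X(m_1H_1+m_2H_2))=h^0(\cO_X(-m_1H_1+(e-3-m_2)H_2))$. Setting $R(t_1,t_2)=\frac{(1-t_2^e)^3}{(1-t_1)^2(1-t_2)^3(1-t_1^{-1}t_2^e)^2}$, a one-line manipulation of the three denominator factors (each factor $1-\mu$ becomes $1-\mu^{-1}=-\mu^{-1}(1-\mu)$ upon inversion) yields the functional equation $R(t_1^{-1},t_2^{-1})=t_2^{\,3-e}R(t_1,t_2)$. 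Unwinding the definition of the expansion ``around $\infty$'' (replace $t_i\to t_i^{-1}$, expand around $0$ in the prescribed order, replace $t_i\to t_i^{-1}$ back) and applying this functional equation turns the $(0,0)$-expansion, whose coefficients are the $h^0$ delivered by the Proposition, into the $(\infty,\infty)$-expansion with coefficient $h^0(\cO_X(-m_1H_1+(e-3-m_2)H_2))=h^2(\cO_X(m_1H_1+m_2H_2))$. This establishes the $CS^2$ formula rigorously.

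For the first cohomology I would route through the Euler characteristic. Because $h^1=h^0+h^2-\chi$ holds termwise on $\mathrm{Pic}(X)$, and since $CS^0$ and $CS^2$ are the $(0,0)$- and $(\infty,\infty)$-expansions, the claimed identity for $CS^1$ (that it equals the sum of the $(\infty,0)$- and $(0,\infty)$-expansions) is \emph{equivalent} to the single statement that the signed four-corner sum
\[
HS\!\left(X,\begin{array}{cc} t_2& t_1 \\ 0&0 \end{array}\right)-HS\!\left(X,\begin{array}{cc} t_2& t_1 \\ \infty&0 \end{array}\right)-HS\!\left(X,\begin{array}{cc} t_2& t_1 \\ 0&\infty \end{array}\right)+HS\!\left(X,\begin{array}{cc} t_2& t_1 \\ \infty&\infty \end{array}\right)
\]
has $\chi(\cO_X(m_1H_1+m_2H_2))$ as the coefficient of $t_1^{m_1}t_2^{m_2}$ for \emph{every} $(m_1,m_2)\in\IZ^2$. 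This is precisely the two-variable analogue of Conjecture~1, but formulated for the Cox-ring Hilbert series rather than the coordinate-ring one; thus the entire content of the conjecture (for a surface) is that the four corner expansions of the Cox-ring series reassemble the Hirzebruch--Riemann--Roch polynomial at every lattice point.

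To prove this four-corner identity I would argue geometrically. Kodaira/Kawamata--Viehweg vanishing give $h^1=h^2=0$ in the interior of the nef cone, which here coincides with $\mathrm{Eff}(X)$, so the $(0,0)$-expansion is supported on $\mathrm{Eff}(X)$ with coefficient $\chi$; dually the $(\infty,\infty)$-expansion is supported on $K_X-\mathrm{Eff}(X)$ with coefficient $\chi$; and (exactly as for the Hirzebruch surface, and as displayed in the figures for this class) the two mixed expansions should be supported in the complementary region, which splits into two disconnected cones on which $h^0=h^2=0$ and hence $h^1=-\chi$. The task reduces to showing that each mixed corner expansion is supported in one of these cones and that its coefficient there equals $-\chi$. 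This would follow from the general principle that a corner expansion of $N/\prod_j(1-\mu_j)^{a_j}$ has, deep inside its cone of support, coefficients given by a fixed quasi-polynomial weighted by $(-1)^{\#\infty}$, together with the rigidity remark that this quasi-polynomial already agrees with the genuine polynomial $\chi$ on the full-dimensional nef cone, where the coefficient is the honest $h^0=\chi$.

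The main obstacle is precisely this last step. The Cox-ring denominator $(1-t_1)^2(1-t_2)^3(1-t_1^{-1}t_2^e)^2$ involves three independent monomial directions with multiplicities, so each corner expansion is a sum of several quasi-polynomial pieces carried on sub-cones and on their separating walls, and one must check that all the lower-dimensional wall contributions cancel, leaving only the single polynomial $\chi$. For the toric Hirzebruch surface this cancellation was transparent because the cohomology classes were literally counted by monomials via the algorithm of Ref.~\cite{CohomOfLineBundles:Algorithm}. Here $X$ is a non-toric complete intersection, and the only systematic route is to resolve $\cO_X$ by the Koszul complex of the three defining sections inside the toric fivefold $Y$ attached to the weight system above, compute the toric line-bundle cohomologies $H^q(Y,-)$ of the twists with \texttt{cohomCalg}~\cite{cohomCalg:Implementation}, and track the surviving classes through the resulting spectral sequence. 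Controlling the differentials and extensions of that spectral sequence, rather than any single numerical identity, is the genuine difficulty and the reason the statement is left as a conjecture.
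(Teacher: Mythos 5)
The paper offers no proof of this statement: it is presented as an (unnumbered) conjecture supported only by explicit cohomology computations, so there is no argument on the paper's side to compare yours against. Measured against that, your proposal is a genuine partial advance rather than a failed proof. The $CS^2$ half is actually established: adjunction does give $K_X=(e-3)H_2$, the functional equation $R(t_1^{-1},t_2^{-1})=t_2^{3-e}R(t_1,t_2)$ is correct (the prefactors $-t_2^{-3e}$ upstairs and $-t_1^{-2}t_1^{2}t_2^{-3-2e}$ downstairs do combine to $t_2^{3-e}$), and combined with the preceding Proposition --- which the paper does prove from the Cox-ring presentation --- it converts the $(0,0)$-expansion into the $(\infty,\infty)$-expansion with coefficients $h^0(\cO_X(-m_1H_1+(e-3-m_2)H_2))=h^2(\cO_X(m_1H_1+m_2H_2))$. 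Likewise your reduction of the $CS^1$ identity, via $h^1=h^0+h^2-\chi$, to the statement that the signed four-corner sum reproduces $\chi$ at every lattice point is correct, and it usefully exposes that this conjecture is exactly the Cox-ring (toric-ambient) instance of Conjecture~1 specialised to this surface.

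What you have not done is close that last step, and you say so. The sketch of the four-corner identity is where the argument stops being a proof: the claim that the $(0,0)$-expansion is ``supported on $\mathrm{Eff}(X)$ with coefficient $\chi$'' holds only away from the boundary of the effective cone (the coefficient is $h^0$, which equals $\chi$ only where the higher cohomology vanishes, and for $e>3$ Kodaira vanishing does not cover the whole interior of the nef cone since $K_X=(e-3)H_2$ is nef but not ample), and the cancellation of the lower-dimensional wall contributions among the several quasi-polynomial pieces of each mixed-corner expansion is asserted rather than proved. You correctly identify the Koszul resolution in the ambient toric fivefold and the control of the resulting spectral sequence as the genuine obstruction --- which is precisely why the paper leaves the statement as a conjecture. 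In short: your proposal proves the $CS^2$ claim unconditionally (given the Proposition), reduces the $CS^1$ claim to a sharp Euler-characteristic identity, and leaves that identity open; it is not a complete proof of the statement, but the part you leave open is exactly the part the paper also leaves open.
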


\subsection{Examples of Mori-dream spaces in dimension 3 and higher}
Finding an explicit presentation for the Cox ring of a projective variety is no easy task in general, as it essentially requires full information about the linear systems and divisors on $X$. In this section we focus on hypersurfaces of dimension $3$ and higher defined by the vanishing of a general bihomogeneous polynomial of bidegree $(d,e)$ in $\IP^m\times \IP^n$. The Picard group is given by the isomorphism ${\rm Pic}(X)\simeq {\rm Pic}(\IP^m\times \IP^n)\simeq\mathbb Z^2$.  

By Example~\ref{ex:Cox}, for $m,n\geq 2$, the Cox ring has a simple presentation as a quotient of the coordinate ring of the ambient variety by the defining polynomial. The effective cone and the nef cone coincide in this case and are equal to $\RR_{\ge 0}H_1+\RR_{\ge 0}H_2$. The generating function for the zeroth cohomology corresponds to the Hilbert-Poincar\'e series, that is 
\begin{equation}
CS^0(X,\cO_X)=HS(X,t_1,t_2) =  \left(  \frac{1-t_1^dt_2^e}{(1-t_1)^{m+1}(1-t_2)^{n+2}} \,,\!\begin{array}{cc} t_1& t_2 \\ 0&0 \end{array}\!\!\right)  ~.
\end{equation}
Writing down generating functions for the higher cohomologies is not straightforward and we will return to discussing an example in this class, namely the bicubic hypersurface in $\IP^2\times \IP^2$ later on. For now, we focus on the case $m=1$ which can be treated in a systematic way. 

\subsubsection{(Semi)-Fano, Calabi-Yau and general type hypersurfaces in $\IP^1\times \IP^n$}\label{hypersurfacesP1Pn}
Let $X$ be a hypersurface of bidegree $(d,e)$ in $\IP^1\times \IP^n$ defined by the vanishing of the bihomogeneous polynomial
$$ f = x_0^d f_0+x_0^{d-1}x_1 f_1+\ldots+x_1^d f_d~,$$
where $x_0, x_1$ are the coordinates on $\IP^1$ and $f_i$ are homogeneous polynomials in the coordinates $y_0,\ldots y_n$ of $\IP^n$. Let $H_1 = \mathcal O_{\IP^1\times \IP^n}(1,0)|_X$ and $H_2 = \mathcal O_{\IP^1\times \IP^n}(0,1)|_X$, such that ${\rm Pic}(X)=\mathbb Z H_1\oplus\mathbb Z H_2$.  All the cohomology series discussed in this section will be relative to the basis $\{H_1,H_2\}$. When $X$ is general, that is $X$ is smooth and the polynomials $f_i$ form a regular sequence, we have the following case distinction, as discussed in Ref~\cite{Ottem2015}:
\begin{enumerate}
\item[$(i)$] For $e=1$, $X$ is a $\IP^{n-1}$-bundle over $\IP^1$. 
\item[$(ii)$] For $d=1$, the second projection realizes $X$ as the blow-up of $\PP^n$ along $\{f_0=f_1=0\}$, and the exceptional divisor is linearly equivalent to $E=eH_2-H_1$. In this case 
$$
{\eff}(X)=\RR_{\ge 0} H_1+\RR_{\ge 0} E \mbox{ and }\mov(X)=\nef(X)=\RR_{\ge 0}H_1+\RR_{\ge 0}H_2.
$$
\item[$(iii)$] For $1<d< n$, there is a variety $X^+$ and a small birational modification $\phi:X\dashrightarrow X^+$, which induces a decomposition $\mov(X)=\nef(X)\cup \phi^*\nef(X^+)$. Also, $$
{\eff}(X)=\mov(X)=\RR_{\ge 0} H_1+\RR_{\ge 0} (eH_2-H_1)\mbox{ and }{\nef}(X)=\RR_{\ge 0}H_1+\RR_{\ge 0}H_2.
$$
\item[$(iv)$] For $d=n$, the divisor $eH_2-H_1$ is base-point free and defines a contraction to $\PP^{n-1}$. In this case 
$$\eff(X)=\mov(X)=\nef(X)=\RR_{\ge 0} H_1+\RR_{\ge 0} (eH_2-H_1).$$
\end{enumerate}
The remaining cases, i.e.~$d\ge n+1$ and $e\ge 2$, correspond to non-Mori dream spaces with $$\overline{\eff(X)}=\overline{\mov(X)}=\nef(X)=\RR_{\ge 0} H_1+\RR_{\ge 0} (neH_2-dH_1).$$
However, the divisor $neH_2-dH_1$ has no effective multiple.

In the cases when $X$ is a Mori dream space, i.e.~for $(d\leq n, e)$ and for $(d, e=1)$, Ottem has shown in Ref.~\cite{Ottem2015}  that the Cox ring admits the following presentation
\begin{equation}\label{coxringpres}
{\rm Cox}(X)=\mathbb C [x_0,x_1,y_0,\ldots,y_n,z_1,\ldots,z_d]/I~,
\end{equation}
where $I=(f_0+x_1z_1,f_1-x_0z_1+x_1z_2,\ldots,f_{d-1}-x_0z_{d-1}+x_1z_d,f_d-x_0z_d)$. 
Note that this class includes Fano varieties, for $d=1$ and $e\leq n$, almost Fano varieties for $d=2$ and $e\leq n$ or $d=1$ and $e=n+1$, Calabi-Yau varieties, for $d=2$ and $e=n+1$ and varieties of general type, in all other cases. 

The Hilbert-Poincar\'e series associated with the Cox ring gives a generating function for the zeroth line bundle cohomology when expanded around $t_2=0$ and $t_1=0$ (in this order):
\begin{equation}
CS^0(X,\cO_X)=  \left( \frac{(1-t_2^{e})^{d+1}}{(1-t_1)^2(1-t_2)^{n+1}(1-t_1^{-1}t_2^{e})^d} \,,\!\begin{array}{cc} t_2& t_1 \\ 0&0 \end{array}\!\!\right)  =\sum_{m_1,m_2\in\mathbb Z} h^0(X,\cO_X(m_1 H_1+m_2 H_2)) t_1^{m_1}t_2^{m_2}~.
\end{equation}

It is easy to see that all line bundle cohomologies except the zeroth, the first and the two top-most cohomologies vanish. This can be inferred from the exact sequence
\begin{equation}\label{Koszul}
0 \rightarrow \cO_{\IP^1\times \IP^n}(m_1-d,m_2-e)  \rightarrow \cO_{\IP^1\times \IP^n}(m_1,m_2) \rightarrow \cO_X(m_1H_1 + m_2 H_2)\rightarrow 0~
\end{equation}
and the fact that on $\IP^n$ only the zeroth and the $n$-th line bundle cohomologies can be non-trivial, while on $\IP^1$ only the zeroth and the first cohomologies can be non-trivial. 

\begin{con}\label{con:P1xPn}\label{conj:hypersurface} ${\rm (Conjecture~3)}$
Let $X$ be a general hypersurface of bi-degree $(d,e)$ in $\IP^1\times \IP^{n\geq 3}$ with $d\leq n$ and $e$ arbitrary or $d$ arbitrary and $e=1$. Denote $H_1 = \mathcal O_{\IP^1\times \IP^n}(1,0)|_X$ and $H_2 = \mathcal O_{\IP^1\times \IP^n}(0,1)|_X$. Then 
\begin{equation}\label{eq:hypersurfaces}
\begin{aligned}
CS^0(X,\mathcal O_X)&{=}\left( \frac{(1{-}t_2^{e})^{d+1}}{(1{-}t_1)^2(1{-}t_2)^{n+1}(1{-}t_1^{-1}t_2^{e})^d}\,,\!\begin{array}{cc} t_2& t_1 \\0&0 \end{array}\!\!\right) {=}\!\!\!\! \sum_{m_1,m_2\in \mathbb Z} \!\! h^0(X,\cO_X(m_1 H_1{+}m_2 H_2)) t_1^{m_1}t_2^{m_2}\\
CS^1(X,\mathcal O_X)&{=}\left( \frac{(1{-}t_2^{e})^{d+1}}{(1{-}t_1)^2(1{-}t_2)^{n+1}(1{-}t_1^{-1}t_2^{e})^d}\,,\!\begin{array}{cc} t_2& t_1 \\0&\infty \end{array}\!\!\right) {=}\!\!\!\! \sum_{m_1,m_2\in \mathbb Z} \!\! h^1(X,\cO_X(m_1 H_1{+}m_2 H_2)) t_1^{m_1}t_2^{m_2}\\
({-}1)^nCS^{n{-}1}(X{,}\mathcal O_X)&{=}\left( \frac{(1{-}t_2^{e})^{d{+}1}}{(1{-}t_1)^2(1{-}t_2)^{n+1}(1{-}t_1^{-1}t_2^{e})^d}\,,\!\begin{array}{cc} t_2& t_1 \\\infty&0 \end{array}\!\!\right) {=}\!\!\!\! \sum_{m_1,m_2\in \mathbb Z} \!\! h^{n-1}(X,\cO_X(m_1 H_1{+}m_2 H_2)) t_1^{m_1}t_2^{m_2}\\
(-1)^nCS^n(X,\mathcal O_X)&{=}\left( \frac{(1{-}t_2^{e})^{d+1}}{(1{-}t_1)^2(1{-}t_2)^{n+1}(1{-}t_1^{-1}t_2^{e})^d}\,,\!\begin{array}{cc} t_2& t_1 \\ \infty&\infty \end{array}\!\!\right) {=}\!\!\!\! \sum_{m_1,m_2\in \mathbb Z} \!\! h^n(X,\cO_X(m_1 H_1{+}m_2 H_2)) t_1^{m_1}t_2^{m_2}\\
\end{aligned}
\end{equation}
and all intermediate line bundle cohomologies vanish. 
\end{con}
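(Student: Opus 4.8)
The plan is to reduce the four cohomology identities to the single statement about $CS^0$ (already established above from the Cox ring presentation~\eqref{coxringpres}) together with the corresponding statement about $CS^1$, and then to prove the latter by combining the Koszul sequence~\eqref{Koszul} with the K\"unneth decomposition of cohomology on $A=\IP^1\times\IP^n$. The first input is essentially in hand: on $A$ the sheaf $\cO_A(a,b)$ has cohomology only in degrees $\{0,1,n,n+1\}$ (by K\"unneth, since $H^1(\IP^n,\cdot)=0$ and $H^1(\IP^1,\cdot)$ is the only higher group on the first factor), and for $n\ge 3$ one has $H^2(A,\cdot)=0$; feeding this into the long exact sequence of~\eqref{Koszul} leaves only $h^0,h^1,h^{n-1},h^n$ possibly nonzero on $X$, proving the vanishing of the intermediate cohomologies.

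Next I would establish the functional equation
\[
HS\!\left(t_2^{-1},t_1^{-1}\right)=(-1)^{n}\,t_1^{\,2-d}\,t_2^{\,n+1-e}\,HS(t_2,t_1),
\]
which is a direct manipulation of the rational function (pull a sign and a monomial out of each factor $1-t_i^{\pm}$) and which is precisely Serre duality on $X$ in disguise: the monomial $t_1^{2-d}t_2^{n+1-e}$ encodes the shift by $K_X=(d-2)H_1+(e-n-1)H_2$ obtained by adjunction, and the sign is the $(-1)^n$ of the conjecture. Since this relation inverts \emph{both} variables at once, it pairs the antipodal corners $(0,0)\leftrightarrow(\infty,\infty)$ and $(0,\infty)\leftrightarrow(\infty,0)$. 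Expanding the identity around $(0,0)$, pulling out the monomial, and substituting $t_i\to t_i^{-1}$ shows that the $(\infty,\infty)$-expansion of $HS$ equals $(-1)^n\sum_m h^0(K_X-mH)\,t^m=(-1)^n\sum_m h^n(mH)\,t^m=(-1)^nCS^n$; the identical argument applied to the other antipodal pair turns the (yet unproven) identity for $CS^1$ into the one for $(-1)^nCS^{n-1}$. Thus everything reduces to proving that the $(0,\infty)$-expansion of $HS$ equals $\sum_m h^1(mH)\,t^m$.

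For this last point I would use the Koszul long exact sequence together with $H^2(A,\cdot)=0$ to identify
\[
h^1\!\left(X,\cO_X(m_1H_1+m_2H_2)\right)=\dim\operatorname{coker}\!\Big[H^1\!\big(A,\cO_A(m_1{-}d,m_2{-}e)\big)\xrightarrow{\ \cdot f\ }H^1\!\big(A,\cO_A(m_1,m_2)\big)\Big].
\]
By K\"unneth, $H^1(A,\cO_A(a,b))=H^1(\IP^1,\cO(a))\otimes H^0(\IP^n,\cO(b))$, so both groups are explicit: the $\IP^1$-factor is spanned by the \v{C}ech classes $x_0^{p}x_1^{q}$ with $p,q\le-1$, and multiplication by $f=\sum_i x_0^{d-i}x_1^i f_i(y)$ acts by shifting the $x$-exponents (discarding any monomial that acquires a non-negative exponent) and multiplying the $\IP^n$-part by the forms $f_i$. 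The ``index'' part $\dim(\text{target})-\dim(\text{source})$ sums to the closed form $(1-t_1^d t_2^e)\,(1-t_2)^{-(n+1)}$ times the expansion of $(1-t_1)^{-2}$ around $t_1=\infty$, so the entire content of the claim is that the remaining contribution $\dim\ker(\cdot f)$, assembled over all $(m_1,m_2)$, upgrades this to the prescribed $(0,\infty)$-expansion; this is exactly where the hypothesis that $(f_0,\dots,f_d)$ is a regular sequence must enter. As a base case I would first settle $e=1$, where $X$ is a $\IP^{n-1}$-bundle over $\IP^1$ and the whole computation is classical via the relative Euler sequence and the projective-bundle formula. A useful cross-check is that Conjecture~\ref{con:Euler}, giving the alternating sum of the four corner expansions as $\sum_m\chi(X,mH)\,t^m$, is equivalent---given the pairing above---to the four identities taken together.

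The main obstacle is the uniform computation of $\dim\ker(\cdot f)$, equivalently the rank of the multiplication map between the K\"unneth groups, as a function of $(m_1,m_2)$: one must show that the syzygies of $(f_0,\dots,f_d)$ visible in $H^1$ conspire to produce exactly the numerator $(1-t_2^e)^{d+1}$ and the denominator factor $(1-t_1^{-1}t_2^e)^d$ of $HS$. An alternative that may render this transparent is to push forward along the second projection $\pi\colon X\to\IP^n$: the fibres are $d$ points away from the locus $Z=\{f_0=\dots=f_d=0\}$, so $h^1(X,\cdot)=H^0(\IP^n,R^1\pi_*\cO_X(\cdot))$ with $R^1\pi_*$ supported on $Z$, whose Koszul resolution should account directly for those factors. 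Either way, matching the resulting generating function to the $(0,\infty)$-expansion term by term is the crux, and is the reason the statement is offered as a conjecture rather than a theorem.
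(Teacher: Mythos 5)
This statement is offered in the paper only as a conjecture: the paper establishes exactly two ingredients rigorously --- the $CS^0$ identity, read off from Ottem's presentation~\eqref{coxringpres} of the Cox ring, and the vanishing of the intermediate cohomologies via the Koszul sequence~\eqref{Koszul} together with the fact that $\IP^1\times\IP^n$ has cohomology only in degrees $0,1,n,n+1$ --- and supports the remaining three identities purely by explicit computer calculation. Your proposal reproduces both of these ingredients by the same arguments, and then goes genuinely further than the paper by proving the functional equation $HS(t_2^{-1},t_1^{-1})=(-1)^n t_1^{2-d}t_2^{n+1-e}HS(t_2,t_1)$ and observing that, since inversion of both variables swaps the expansion corners $(0,0)\leftrightarrow(\infty,\infty)$ and $(0,\infty)\leftrightarrow(\infty,0)$ while commuting with multiplication by the monomial $t^{K_X}$, Serre duality reduces the four identities to just the $CS^0$ and $CS^1$ ones. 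I checked the sign and the monomial shift and the reduction is correct; this is a worthwhile structural improvement over the paper, which treats the four expansions as independent conjectural statements.

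However, your proposal is not a proof of the conjecture, and you should be explicit that what remains open is exactly the step you defer: the identification of $\sum_{m}\dim\operatorname{coker}\bigl[H^1(A,\cO_A(m-D))\xrightarrow{\cdot f}H^1(A,\cO_A(m))\bigr]t^m$ with the $(0,\infty)$-expansion of $HS$. The Euler-characteristic part of that cokernel is elementary, as you note, but the kernel of multiplication by $f$ on the K\"unneth groups $H^1(\IP^1,\cO(a))\otimes H^0(\IP^n,\cO(b))$ --- equivalently the syzygies of $(f_0,\dots,f_d)$ that survive into $H^1$ --- is precisely what must produce the factors $(1-t_2^e)^{d+1}$ and $(1-t_1^{-1}t_2^e)^{-d}$, and neither you nor the paper computes it. Your suggested alternative via $R^1\pi_*$ along the second projection is plausible but equally unexecuted. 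So the honest summary is: your reduction is correct and sharper than the paper's presentation, but the conjecture remains a conjecture, with the same single unproven kernel/rank computation standing between the known $CS^0$ statement and the full result.
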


\subsubsection{General hypersurfaces of bi-degree $(2,4)$ in $\IP^1\times \IP^3$}\label{sec:7887general}
While Conjecture~\ref{con:P1xPn} covers this case, it is interesting to look more in detail into the birational structure of this variety which 
has already been discussed in several places in the literature (see, e.g.,~Refs.~\cite{Brodie:2020fiq, Ottem2015}). Denoting $H_1 = \mathcal O_{\IP^1\times \IP^3}(1,0)|_X$ and $H_2 = \mathcal O_{\IP^1\times \IP^3}(0,1)|_X$, Lefschetz hyperplane theorem implies that ${\rm Pic}(X)=\mathbb Z H_1\oplus\mathbb Z H_2$. Moreover, 
\begin{equation}
{\eff}(X)=\mov(X)=\RR_{\ge 0} H_1+\RR_{\ge 0} (4H_2-H_1),\mbox{ and }{\nef}(X)=\RR_{\ge 0}H_1+\RR_{\ge 0}H_2~,
\end{equation}
hence the effective cone consists of two Mori chambers corresponding to the two birational models of~$X$ related by a flop~\cite{Brodie:2021toe}. For this variety, it turns out that both birational models belong to the same diffeomorphism class, such that their triple intersection numbers and second Chern class agree upon an integral change of basis. 
\begin{center}
\begin{figure}[h]
\begin{center}
\includegraphics[width=7.1cm]{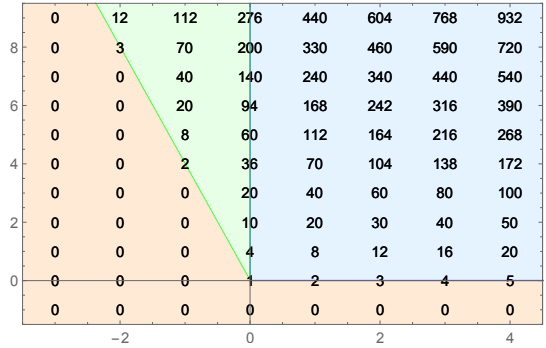}
\caption{\itshape Zeroth line bundle cohomology data for a generic Calabi-Yau hypersurface  of bi-degree $(2,4)$ in $\mathbb{P}^1\times\mathbb{P}^3$. The numbers indicate zeroth line bundle cohomology dimensions, while their locations in the plot indicate the first Chern class of the corresponding line bundles.}
\label{fig:X7887generic} 
\end{center}
\end{figure}
\end{center}

The structure of the effective cone as well as some data for the zeroth line bundle cohomology are shown in Figure~\ref{fig:X7887generic}, where the blue cone corresponds to the K\"ahler cone of $X$, denoted by $\mathcal K(X)$, and the green cone corresponds to the K\"ahler cone of the flopped manifold~$\mathcal K(X')$. Inside $\mathcal K(X)$, $h^0(X,L)=\chi(X,L)$ by Kodaira vanishing since $X$ is Calabi-Yau, while inside $\mathcal K(X')$, $h^0(X,L)=h^0(X',L')=\chi(X',L')$ where $L'$ is the line bundle on $X'$ corresponding to $L$ on~$X$, since dimensions of linear systems are preserved under a flop. The wall separating the two K\"ahler cones, excluding the origin, is covered by the Kawamata-Viehweg vanishing theorem and here $h^0(X,L)=\chi(X,L)=\chi(X',L')$. The line bundles lying along the two boundaries of the effective cone require a separate discussion and the dimensions of their zeroth cohomology can be obtained by sequence chasing. 
Outside of these regions the zeroth cohomology is trivial. The cohomology formulae are summarised below:
\begin{equation}
\begin{tabular}{ l | c}
 {\rm region in ${\rm Eff}(X)$}		&~$ h^0(X,L=m_1H_1+m_2H_2)$ \\[0pt]
\hline
$\cK(X)$ &~ $\chi(X,L)=2 m_1 (1 + m_2^2) + \frac{1}{3} m_2 (11 + m_2^2)$ \\[0pt]
$\cK(X')$ &~ $\chi(X',L')= \frac{32}{3}m_1(1-m_1^2) + 2 m_1 (1 + m_2^2) + \frac{1}{3} m_2 (11 + m_2^2)$ \\[0pt]
$\IR_{>0}H_2$ &~ $\chi(X,L)=\chi(X',L')$ \\[0pt]
$\IR_{>0}H_1$ & $\chi(\IP^1,\cO_{\IP^1}(m_1))$\\[0pt]
$\IR_{>0}(4H_2-H_1)~$ & $\chi(\IP^1,\cO_{\IP^1}(-m_1))$\\[0pt]
$m_1=m_2=0$ & $1$
\end{tabular}
\label{eq:7887_formulae}
\end{equation}

The presentation of the Cox ring of $X$ given in Ref.~\cite{Ottem2015} corresponds to an embedding as a complete intersection in a toric variety. To describe this, let $ f = x_0^2 f_0+x_0x_1 f_1+x_1^2 f_2$ be the  defining polynomial, where $[x_0,x_1]$ denotes homogeneous coordinates on $\IP^1$ and $f_0, f_1, f_2$ are homogeneous polynomials of degree $4$ in the coordinates $[y_0, y_1, y_2, y_3]$ of $\IP^3$. Then $X$ can be embedded as a complete intersection in the toric variety defined by the weight system
\begin{equation}
\begin{array}{cccccccc}
x_0&x_1&y_0&y_1&y_2&y_3&z_1 & z_2\\
\hline
1 &1 &0 &0&0 &0 &\!\!-1 &\!\!-1\\
0 &0 &1 &1&1 &1 &~4 &~4
\end{array}
\end{equation}
cut out by the ideal $I=(f_0+x_1z_1,f_1-x_0z_1+x_1z_2,f_2-x_0z_2)$, which gives
\begin{equation}\label{cox_7887}
{\rm Cox}(X)=\mathbb C[x_0,x_1,y_0,y_1,y_2,z_1,z_2]/I~,
\end{equation}
where $I=(f_0+x_1z_1,f_1-x_0z_1+x_1z_2,f_2-x_0z_2)$. 
 Hence we have the following.

\begin{crl}
The multivariate Hilbert-Poincar\'e series associated with the Cox ring~\eqref{cox_7887} corresponds to the generating function for the zeroth line bundle cohomology dimensions on a general hypersurface~$X$ of bi-degree $(2,4)$ in $\IP^1\times \IP^3$:
\begin{equation}\label{eq:cs07887}
\begin{aligned}
CS^0(X,\mathcal O_X )&=\left( \frac{(1-t_2^{4})^{3}}{(1-t_1)^2(1-t_2)^{4}(1-t_1^{-1}t_2^{4})^2},\begin{array}{cc} t_2& t_1 \\0&0 \end{array}\right) = \!\!\!\!\sum_{m_1,m_2\in \mathbb Z} h^0(X,\cO_X(m_1 H_1+m_2 H_2)) t_1^{m_1}t_2^{m_2}~.
\end{aligned}
\end{equation}
\end{crl}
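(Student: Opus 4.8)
The plan is to read the Hilbert-Poincar\'e series directly off Ottem's presentation \eqref{cox_7887}, using only the multigrading of the generators together with the fact that the three defining relations form a regular sequence. First I would record the $\IZ^2$-grading that the weight system induces on the eight generators of the ambient Cox ring: the $\IP^1$-coordinates $x_0,x_1$ carry bidegree $(1,0)$, the $\IP^3$-coordinates $y_0,\ldots,y_3$ carry bidegree $(0,1)$, and the two auxiliary coordinates $z_1,z_2$ carry bidegree $(-1,4)$. Since the ambient toric variety has free class group $\IZ^2$ and its Cox ring is the multigraded polynomial ring $\IC[x_0,x_1,y_0,\ldots,y_3,z_1,z_2]$, its Hilbert-Poincar\'e series is the product of geometric series over the generators,
\begin{equation*}
\frac{1}{(1-t_1)^2(1-t_2)^4(1-t_1^{-1}t_2^4)^2}\,,
\end{equation*}
with the understanding that $t_2$ and then $t_1$ are taken small, matching the expansion order in \eqref{eq:cs07887}.

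Next I would observe that each of the three generators of $I$ is bihomogeneous of bidegree $(0,4)$: the $f_j$ have degree $4$ in the $\IP^3$-coordinates, while the cross terms $x_iz_j$ have bidegree $(1,0)+(-1,4)=(0,4)$, so every relation carries the monomial weight $t_2^4$. The main step is then to establish that these three relations form a regular sequence in the polynomial ring, equivalently that $I$ is a codimension-$3$ complete intersection ideal. This rests on the generality of $X$: Ottem's embedding realises $X$ as a complete intersection of the expected codimension $3$ inside the six-dimensional toric ambient space, so the common zero locus of the three relations has codimension exactly $3$ and the Koszul complex on them is acyclic. Granting this, the additivity of the Hilbert-Poincar\'e series along the Koszul resolution — precisely the mechanism used after the sequence \eqref{Koszul}, applied once for each relation — multiplies the ambient series by $\prod_{j=1}^{3}(1-t_2^4)=(1-t_2^4)^3$, producing the rational function displayed in \eqref{eq:cs07887}.

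Finally, by the very definition of the Cox ring, the coefficient of $t_1^{m_1}t_2^{m_2}$ in this series equals $\dim {\rm Cox}(X)_{(m_1,m_2)}=h^0(X,\cO_X(m_1H_1+m_2H_2))$, which is exactly the asserted identity for $CS^0(X,\cO_X)$. The only non-formal ingredient is the regular-sequence property; I expect this to be the sole obstacle, and I would secure it by verifying, using the generality of $f_0,f_1,f_2$, that the three relations cut out a locus of the expected dimension in $\IC^8$ away from the irrelevant locus, so that the Koszul additivity applies verbatim.
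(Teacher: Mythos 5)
Your proposal is correct and follows essentially the same route as the paper, which treats the identity as an immediate consequence of Ottem's presentation \eqref{cox_7887} of ${\rm Cox}(X)$ as a codimension-three complete intersection in the toric ambient space with the stated weight system. You simply make explicit the bookkeeping the paper leaves implicit: the bidegrees of the eight generators give the denominator, the three relations of bidegree $(0,4)$ form a regular sequence, and Koszul additivity supplies the numerator $(1-t_2^4)^3$.
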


\begin{remark}
The fact that the order of variable expansion matters is not surprising. This is essentially due to the presence of the factor $(1-t_1^{-1}t_2^{4})^2$ in the denominator, which leads to a different pole structure depending on the order in which the expansion is carried out. Expanding first around $t_1=0$ would give
\begin{equation}
\frac{1}{1-t_1^{-1}t_2^4}= \frac{t_1}{t_2^4}\frac{1}{t_1 t_2^{-4}-1}=-\frac{t_1}{t_2^4}\left(1+\frac{t_1}{t_2^4}+\frac{t_1^2}{t_2^{8}}+\ldots \right)~,
\end{equation}
while expanding first around $t_2=0$ and then around $t_1=0$ gives
\begin{equation} 
\frac{1}{1-t_1^{-1}t_2^4}= 1+\frac{t_2^4}{t_1}+\frac{t_2^{8}}{t_1^2}+\ldots~,
\end{equation}
which is the correct expansion in this case.
 \end{remark}
 
\begin{remark}
The generating function \eqref{eq:cs07887} can be decomposed into two contributions directly constructed from the Mori chamber decomposition of the effective cone and a correction term. As mentioned earlier on, the effective cone of a general hypersurface of bi-degree $(2,4)$ in $\IP^1\times \IP^3$ consists of two chambers associated with the nef cones of two (diffeomorphic) birational models connected by a flop: 
\begin{equation}
{\eff}(X)=\left( \RR_{\ge 0}H_1+\RR_{\ge 0}H_2\right) \cup \left( \RR_{\ge 0}H_2+\RR_{\ge 0}(4H_2-H_1)\right)~.
\end{equation}
The generating function \eqref{eq:cs07887} can then be obtained by summing up the Hilbert-Poincar\'e series associated with the coordinate rings of the two birational models of $X$: 
\begin{equation}\label{eq:cs07887v2}
\begin{aligned}
CS^0\left(X,\mathcal O_X \right)&= HS(X, t_1, t_2) + HS(X, t_1^{-1}t_2^4, t_2) - {\rm corr.~term}\\
& = \left( \frac{1-t_1^2 t_2^4}{(1-t_1)^2 (1-t_2)^4} ,\begin{array}{cc} t_2& t_1 \\0&0 \end{array}\right) + \left( \frac{1-\left(t_1^{-1}t_2^4\right)^2 t_2^4}{(1-t_1^{-1}t_2^4)^2 (1-t_2)^4} ,\begin{array}{cc} t_2& t_1 \\0&0 \end{array}\right) - \left( \frac{1 - t_2^4}{(1-t_2)^4} ,\begin{array}{c} t_2 \\0\end{array}\right) 
\end{aligned}
\end{equation}

The correction term can be recognised as $HS(\IP^3[4],t_2)$, however, this is not the right interpretation and the connection to~$X$ is more subtle:
\begin{equation}
\left.\frac{1-t_1^2 t_2^4}{(1-t_1)^2 (1-t_2)^4}\right|_{t_1= 0} + \left.\frac{1-\left(t_1^{-1}t_2^4\right)^2 t_2^4}{(1-t_1^{-1}t_2^4)^2 (1-t_2)^4}\right|_{t_1=\infty} - \frac{1 - t_2^4}{(1-t_2)^4} =  \frac{1 - t_2^8}{(1-t_2^4)(1-t_2)^4}~,
\end{equation}
which corresponds to  $HS(\IP_{[4:1:1:1:1]}[8],t_2)$, the Hilber-Poincar\'e series of the singular threefold obtained by contracting the $64$ isolated curves involved in the flop (see Ref.~\cite{Brodie:2021toe} for a discussion of flops for general complete intersections in products of projective spaces).

The generating functions for the higher cohomology dimensions follow the pattern proposed in Conjecture~\ref{con:P1xPn} (see also \fref{fig:X7887generic_intro} for data on the first cohomology dimensions).

\end{remark}

\subsubsection{Special hypersurfaces of bi-degree $(2,4)$ in $\IP^1\times \IP^3$}\label{sec:special2,4}
There are many ways in which the defining polynomial $f = x_0^2 f_0+x_0x_1 f_1+x_1^2 f_2$ can be specialised. Here, we consider one possibility and discuss the associated zeroth line bundle cohomology together with its generating function. 
At generic points $y\in\IP^3$, the equation  $f(x,y)=0$  admits two points in $\IP^1$ as solutions. When $f_0(y)=f_1(y)=f_2(y)=0$, which for generic $f_0,f_1$ and $f_2$ happens at $64$ isolated points in $\IP^3$, the equation admits an entire $\IP^1$ as a solution. These isolated genus 0 curves collapse to points as one approached the edge of the K\"ahler cone. 

The situation is radically different when $f_1=0$ and $f_0,f_2$ remain general. While this choice still leads to smooth hypersurfaces, the change in the structure of the zeroth line bundle cohomology is drastic, as shown in \fref{fig:X7887tuned1}. This is to be expected, as now the locus $f_0(y)=f_2(y)=0$ defines a curve of genus $33$ in $\IP^3$, which means that the $64$ isolated collapsing curves have coalesced into a surface, a $\IP^1$-fibration over a genus-33 curve, the flop being replaced by an elementary transformation~\cite{Katz:1996ht}. The collapsing divisor is rigid and its class is given by $\Gamma=-2H_1+4H_2$. 

\begin{center}
\begin{figure}[h]
\begin{center}
\includegraphics[width=7.1cm]{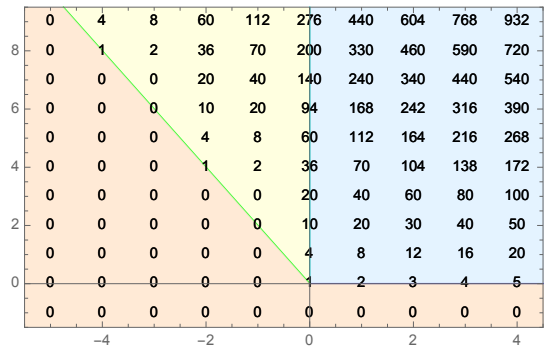}
\caption{\itshape Zeroth line bundle cohomology data for a Calabi-Yau hypersurface of bi-degree $(2,4)$ in $\mathbb{P}^1\times\mathbb{P}^3$ defined as the vanishing locus of $f = x_0^2 f_0+x_1^2 f_2$. The numbers indicate zeroth line bundle cohomology dimensions, while their locations in the plot indicate the first Chern class of the line bundles.}
\label{fig:X7887tuned1}
\end{center}
\end{figure}
\end{center}

As discussed in Ref.~\cite{Brodie:2021zqq}, the zeroth line bundle cohomology is captured by the following formulae, where $[C_1], [C_2]$ denote the curve classes dual to $H_1, H_2$, such that $[C_i]\cdot H_j = \delta_{ij}$: 
\begin{equation}
\begin{tabular}{ l | c}
 {\rm region in ${\rm Eff}(X)$}		&~$ h^0(X,L=\cO_X([D])=m_1H_1+m_2H_2)$ \\[4pt]
\hline
$\cK(X)\cup \IR_{>0}H_2$ &~ $\chi(X,L)$ \\[4pt]
$\IR_{>0}H_2+\IR_{>0}(4H_2-2H_1)$ & ~ $\chi\left(X, \cO_{X}\left([D]-\ceil{\frac{[D]\cdot [C_1]}{\Gamma\cdot [C_1]}}[\Gamma]\right)\right)$ \\[4pt]
$\IR_{>0}H_1$ & $\chi(\IP^1,\cO_{\IP^1}(m_1))$\\
$m_1=m_2=0$ & $1$
\end{tabular}
\label{eq:7887_formulae_tuned}
\end{equation}

These formulae are captured in the following generating function for the zeroth cohomology. 

\begin{con}
A generating function for the zeroth line bundle cohomology dimensions on a hypersurface $X$ in $\IP^1\times \IP^3$ defined as the zero locus of a homogeneous $f = x_0^2 f_0+x_1^2 f_2$ where $[x_0,x_1]$ are homogeneous coordinates on $\IP^1$ and $f_0,f_2$ are general homogeneous polynomials of degree $4$ in the $\IP^3$ coordinates is given by
\begin{equation}
\begin{aligned}
CS^0\left(X,\mathcal O_X \right) &= \left( \frac{1-t_1^2 t_2^4}{(1-t_1)^2 (1-t_2)^4} ,\begin{array}{cc} t_2& t_1 \\0&0 \end{array}\right) + \\
&~~~~~~~~\left( \frac{(1-t_1^{-2}t_2^8 )^2}{(1-t_1^{-2}t_2^4)(1-t_1^{-1}t_2^4)^2 (1-t_2)^4} ,\begin{array}{cc} t_2& t_1 \\0&0 \end{array}\right) - \left( \frac{1 - t_2^4}{(1-t_2)^4} ,\begin{array}{c} t_2 \\0\end{array}\right) ~\\
& = \left( \frac{(1-t_2^4)^2}{(1-t_1)^2(1-t_2)^4(1-t_1^{-2}t_2^4)} ,\begin{array}{cc} t_2& t_1 \\0&0 \end{array}\right)  
\end{aligned}
\end{equation}
\end{con}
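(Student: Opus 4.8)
The plan is to reduce the statement to a coefficient-by-coefficient comparison: I must show that, with the prescribed order of expansion ($t_2$ first, then $t_1$), the coefficient of $t_1^{m_1}t_2^{m_2}$ in the combined generating function equals $h^0(X,\cO_X(m_1H_1+m_2H_2))$ as given by the piecewise formula in Table~\eqref{eq:7887_formulae_tuned}. I will take that table as the target, treating its entries as established in Ref.~\cite{Brodie:2021zqq}, or re-derived via Zariski decomposition together with the rigidity of $\Gamma=-2H_1+4H_2$, Kawamata--Viehweg vanishing, and Kodaira vanishing on the Calabi--Yau $X$. Since the statement displays the generating function in two forms --- a three-term decomposition and a single rational function --- the first task is to confirm that these agree as formal Laurent series in the given expansion order.

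For this algebraic step I would factor out the common $\frac{1}{(1-t_2)^4}$ and simplify the second summand using $1-t_1^{-2}t_2^{8}=(1-t_1^{-1}t_2^{4})(1+t_1^{-1}t_2^{4})$, which collapses it to $\frac{(1+t_1^{-1}t_2^4)^2}{(1-t_1^{-2}t_2^4)(1-t_2)^4}$. Clearing denominators then reduces the claimed equality to a single polynomial identity in $t_1,t_2$; multiplying the $t_1^{-2}t_2^4$-factors through by $t_1^2$, both sides are seen to equal $t_1^2(1-t_2^4)^2$ over the common denominator $(1-t_1)^2(t_1^2-t_2^4)$. This is a routine verification, and it is the specialization to the rigid divisor $\Gamma$ of the generic $(2,4)$ computation of \sref{sec:7887general}.

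The substantive step is the coefficient extraction from the single rational function. Expanding $t_2$ first gives $\frac{1}{1-t_1^{-2}t_2^4}=\sum_{k\ge 0}t_1^{-2k}t_2^{4k}=\sum_{k\ge0}(t^{\Gamma})^k$, so the geometric series literally records peeling off multiples of the rigid class $\Gamma=-2H_1+4H_2$. I would organize the verification through the three-term decomposition. The term $HS(X,t_1,t_2)$ is a genuine power series counting the restricted (polynomial) sections $\dim\big(\IC[x,y]/(f)\big)_{(m_1,m_2)}$; on the locus $\cK(X)=\IR_{\ge0}H_1+\IR_{\ge0}H_2$ the restriction map is surjective (from the Koszul sequence, using $H^1(\IP^1\times\IP^3,\cO(m_1-2,m_2-4))=0$ for $m_1\ge1$) and Kawamata--Viehweg gives $h^0=\chi$, matching the first row of the table. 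The second term supplies the negative-$m_1$ contributions: its truncated $k$-sum, constrained by $m_1+2k\ge0$ and $m_2-4k\ge0$, stops exactly at the integer $\ceil{[D]\cdot[C_1]/(\Gamma\cdot[C_1])}$, reproducing $\chi\big(X,\cO_X([D]-\ceil{\cdots}\,\Gamma)\big)$ on the effective-but-not-nef chamber. The correction term $\frac{1-t_2^4}{(1-t_2)^4}$, supported on $m_1=0$, removes the double count along the shared wall $\IR_{\ge0}H_2$ and repairs the failure of surjectivity at $m_1=0$, where $H^1(\IP^1,\cO(-2))\ne0$.

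The main obstacle will be matching the coefficients cleanly on the lower-dimensional boundary strata, where $h^0$ departs from the naive $\chi$. Along $\IR_{>0}H_1$ the table gives $h^0=m_1+1=\chi(\IP^1,\cO(m_1))$ rather than $\chi(X,L)=2m_1$, and at the origin $h^0=1$; I must check that the three contributions conspire to produce precisely these jumps, together with the vanishing outside $\eff(X)$. Concretely this means verifying that the truncation of the $k$-sum, the ceiling in the peeling formula, and the correction term align on every ray and wall simultaneously --- reproducing a \emph{cubic} Euler characteristic in the interior as a sum over $k$ of products of linearly-growing factors, while still delivering the correct constant or linear values on the boundary. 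Establishing this boundary bookkeeping, rather than the interior identity, is where the real work lies.
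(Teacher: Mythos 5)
You should first be aware that the paper does not prove this statement: it is presented as a conjecture, justified only by (i) the closed-form table~\eqref{eq:7887_formulae_tuned} imported from Ref.~\cite{Brodie:2021zqq}, (ii) the constructive derivation of the three-term decomposition (nef-chamber Hilbert series, plus a Zariski-chamber term obtained by peeling off the rigid divisor $\Gamma=-2H_1+4H_2$, minus a wall correction), and (iii) explicit computer checks of cohomology dimensions. Your proposal therefore cannot ``match the paper's proof''; what it does is reconstruct exactly the paper's implicit logic and then attempt to upgrade it to a proof. The purely algebraic part of your argument is correct and is a genuine contribution the paper leaves unstated: using $1-t_1^{-2}t_2^8=(1-t_1^{-1}t_2^4)(1+t_1^{-1}t_2^4)$ the three terms do sum to the single rational function (I confirm the polynomial identity $(1-u^2v)(u^2-v)+(u+v)^2(1-u)^2-(1-v)(1-u)^2(u^2-v)=u^2(1-v)^2$ with $u=t_1$, $v=t_2^4$), and since all summands are expanded in the same iterated-Laurent-series ring ($t_2$ first, then $t_1$, both at $0$), the rational identity does imply equality of the series. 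Your identifications of the first term with $\cK(X)$ (via surjectivity of restriction for $m_1\geq 1$), of the geometric series $\sum_k(t_1^{-2}t_2^4)^k$ with peeling off multiples of $\Gamma$, and of the correction term with the failure of surjectivity at $m_1=0$ coming from $H^1(\IP^1,\cO_{\IP^1}(-2))$ are all sound.

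The gap is the one you yourself flag, and it is real: the boundary bookkeeping is not done, and it cannot be reduced to the table as literally stated, because the table does not cover all strata. In particular the ray $\IR_{>0}\Gamma$ lies in neither $\cK(X)\cup\IR_{>0}H_2$ nor the open cone $\IR_{>0}H_2+\IR_{>0}(4H_2-2H_1)$, and there the peeling formula $\chi\bigl(X,\cO_X([D]-\ceil{[D]\cdot[C_1]/\Gamma\cdot[C_1]}\,[\Gamma])\bigr)$ would return $\chi(X,\cO_X)=0$, whereas rigidity of $\Gamma$ gives $h^0(X,\cO_X(k\Gamma))=1$ --- which is in fact what the generating function produces (the coefficient there is $\binom{3}{3}+\binom{-1}{3}=1$). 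So the generating function is consistent, but a proof must independently establish $h^0$ on every boundary ray rather than cite the table. Until that is done (and until the table itself, which is empirical in the cited reference, is established), your argument remains a proof strategy at the same evidential level as the paper's own presentation, not a completed proof.
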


Note that the first term and the correction term from Eq.~\eqref{eq:cs07887v2} remain the same, however, the second term, which is now associated with a Zariski chamber, is different. 
The higher line bundle cohomology dimensions are given by the following generating function (see also \fref{fig:X7887tuned1_intro} for data on the first cohomology dimensions).

\begin{con}
$({\rm Conjecture~4})$
Let $X$ be a smooth hypersurface in $\IP^1\times \IP^3$ defined as the zero locus of a homogeneous polynomial $f = x_0^2 f_0+x_1^2 f_2$ where $[x_0,x_1]$ are homogeneous coordinates on $\IP^1$ and $f_0,f_2$ are general homogeneous polynomials of degree $4$ in the $\IP^3$ coordinates. 
A generating function for all line bundle cohomology dimensions is given by
\begin{equation}
\begin{aligned}
CS^0(X,\mathcal O_X) &= \left( \frac{(1-t_2^4)^2}{(1-t_1)^2(1-t_2)^4(1-t_1^{-2}t_2^4)} ,\begin{array}{cc} t_2& t_1 \\0&0 \end{array}\right)  \\
CS^1(X,\mathcal O_X) &= \left( \frac{(1-t_2^4)^2}{(1-t_1)^2(1-t_2)^4(1-t_1^{-2}t_2^4)} ,\begin{array}{cc} t_2& t_1 \\0&\infty \end{array}\right)  \\
-CS^2(X,\mathcal O_X) &= \left( \frac{(1-t_2^4)^2}{(1-t_1)^2(1-t_2)^4(1-t_1^{-2}t_2^4)} ,\begin{array}{cc} t_2& t_1 \\ \infty&0 \end{array}\right)  \\
-CS^3(X,\mathcal O_X) &= \left( \frac{(1-t_2^4)^2}{(1-t_1)^2(1-t_2)^4(1-t_1^{-2}t_2^4)} ,\begin{array}{cc} t_2& t_1 \\ \infty&\infty \end{array}\right) ~.
\end{aligned}
\end{equation}
\end{con}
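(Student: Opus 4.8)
\section*{Proof proposal for Conjecture 4}

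The plan is to reduce all four cohomology series to the cohomology of line bundles on the ambient space $\cA=\IP^1\times\IP^3$ via the Koszul resolution. Even for the special polynomial $f=x_0^2f_0+x_1^2f_2$, the divisor $X$ is a genuine smooth hypersurface, so $f$ is a nonzerodivisor and $0\to\cO_\cA(-2,-4)\xrightarrow{\,f\,}\cO_\cA\to\cO_X\to0$ is exact; twisting by $\cO_\cA(m_1,m_2)$ and applying K\"unneth, $H^k(\cA,\cO(a,b))=\bigoplus_{i+j=k}H^i(\IP^1,\cO(a))\otimes H^j(\IP^3,\cO(b))$, so $H^2(\cA,-)=0$ and only $k\in\{0,1,3,4\}$ survive. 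Writing $\phi^k\colon H^k(\cA,\cO(m_1{-}2,m_2{-}4))\to H^k(\cA,\cO(m_1,m_2))$ for multiplication by $f$, the long exact sequence breaks into two pieces and yields $h^0(X,L)=\dim\operatorname{coker}\phi^0+\dim\ker\phi^1$, $h^1(X,L)=\dim\operatorname{coker}\phi^1$, $h^2(X,L)=\dim\ker\phi^3$, and $h^3(X,L)=\dim\operatorname{coker}\phi^3+\dim\ker\phi^4$, where $\phi^0$ is injective and $\phi^4$ surjective. Since Serre duality on $\cA$ (with $K_\cA=\cO(-2,-4)$) identifies $\phi^3$ at degree $(m_1,m_2)$ with $\phi^1$ at the dual degree $(-m_1,-m_2)$, the entire problem is controlled by the single family of maps $\phi^1$.

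I take the $CS^0$ identity as given, since it is the Hilbert series of the Cox ring of $X$ recorded just above. Because $X$ is Calabi--Yau, Serre duality gives $h^3(X,L)=h^0(X,L^{-1})$ and $h^2(X,L)=h^1(X,L^{-1})$, so at the level of generating functions $CS^3$ (resp.\ $CS^2$) is obtained from $CS^0$ (resp.\ $CS^1$) by $t_i\mapsto t_i^{-1}$, which is precisely the passage from expansion around $0$ to expansion around $\infty$ in both variables. The only subtlety is the overall sign: repeating the bookkeeping of Section~\ref{sec:EulerChar} (the $(-1)^{\dim X+p}$ factor produced when every variable is expanded around infinity) reproduces exactly the signs $-CS^2$ and $-CS^3$. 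It therefore remains only to establish the formula for $CS^1$, i.e.\ to compute $\operatorname{rank}\phi^1$.

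The heart of the argument is the rank of $\phi^1$, and here the special shape of $f$ is decisive. Representing $H^1(\IP^1,\cO(a))$ by the negative Laurent monomials $x_0^px_1^q$ with $p,q\le-1$ and $p+q=a$, the absence of an $x_0x_1$ term means $\phi^1=(x_0^2\otimes f_0)+(x_1^2\otimes f_2)$ preserves the parity of the exponent $p$. Thus $\phi^1$ decomposes as a direct sum over the two parity classes, and within each class it is bidiagonal, with $f_2$ on the diagonal and $f_0$ on the off-diagonal; a kernel element is a chain $(g_p)$ obeying the two-term relations $f_2g_p+f_0g_{p-2}=0$ in $H^0(\IP^3,\cO(m_2))$. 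Since $f_0,f_2$ are general quartics they form a regular sequence, their common zero locus being the smooth genus-$33$ curve $\cC=V(f_0,f_2)\subset\IP^3$ of degree $16$; solving the recurrence top-down then forces the leading term to be divisible by $f_0^{N-1}$, where $N$ is the length of the chain, so the kernel on that chain is $f_0^{N-1}\cdot H^0(\IP^3,\cO(m_2-4N))$ of dimension $\binom{m_2-4N+3}{3}$. Counting the two chain lengths $N^{\mathrm{odd}},N^{\mathrm{even}}$ as functions of $m_1$ gives a closed formula for $\dim\ker\phi^1$. Combining this with $\sum(\dim B^1-\dim A^1)\,t_1^{m_1}t_2^{m_2}=\tfrac{1-t_1^2t_2^4}{(1-t_1)^2(1-t_2)^4}$ (the $t_1$-factor expanded around $\infty$, the $t_2$-factor around $0$) --- which is exactly the first term of the Cox-ring decomposition of $CS^0$ --- and resumming the two geometric series $\sum_{m_1\le-2}\big(t_2^{4N^{\mathrm{odd}}}+t_2^{4N^{\mathrm{even}}}\big)t_1^{m_1}/(1-t_2)^4$ should collapse to the claimed expansion of $\tfrac{(1-t_2^4)^2}{(1-t_1)^2(1-t_2)^4(1-t_1^{-2}t_2^4)}$ around $(t_2,t_1)=(0,\infty)$.

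The main obstacle is making the rank computation for $\phi^1$ exact and uniform. One must control the two-term recurrence at the two ends of each chain --- the degenerate rows where multiplication by $x_0^2$ or $x_1^2$ lands outside $H^1(\IP^1)$ and therefore vanishes --- and verify that these boundary rows introduce no extra kernel, so that the divisibility bound $f_0^{N-1}\mid g$ is tight. This is exactly where genericity of $f_0,f_2$ and the geometry of $\cC$ are indispensable: for the generic polynomial carrying an $x_0x_1f_1$ term the parity splitting is destroyed, $\phi^1$ becomes tridiagonal, and the controlling locus changes to the $64$ points $V(f_0,f_1,f_2)$, which is why the cohomology differs so sharply between the two families. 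By contrast, the final resummation and the sign bookkeeping needed to pass from $CS^1,CS^0$ to $CS^2,CS^3$ are routine once the rank formula is secured, and the low strata $m_1\in\{-1,0\}$ together with small $m_2$ must be treated by hand but are elementary.
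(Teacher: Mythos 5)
The paper offers no proof of this statement: Conjecture~4 is supported only by explicit machine computations of $h^i$ over a finite window of line bundles (cf.\ \fref{fig:X7887tuned1_intro}) together with the heuristic assembly of $CS^0$ from the Zariski-chamber decomposition given in Section~\ref{sec:special2,4}. Your Koszul/K\"unneth reduction is therefore a genuinely different and more ambitious route: if written out in full it would upgrade the conjecture to a theorem, which the paper does not attempt. The skeleton is sound --- the vanishing $H^2(\cA,-)=0$, the splitting of the long exact sequence, injectivity of $\phi^0$, surjectivity of $\phi^4$, the Serre-duality identification of $\phi^3$ with $\phi^1$, and the parity splitting of $\phi^1$ induced by the absence of the $x_0x_1f_1$ term are all correct. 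Moreover, the boundary difficulty you flag does not actually arise: for $m_1\le 0$ the unknowns sit at $p\in\{m_1-1,\dots,-1\}$ while the equations sit at $r\in\{m_1+1,\dots,-1\}$, so each parity chain has exactly one more unknown than (consecutive, two-term) equations, and the regular-sequence argument gives $\ker\cong f_0^{N-1}\cdot H^0(\IP^3,\cO(m_2-4N))$ per chain with no degenerate rows and no special treatment of $m_1\in\{-1,0\}$. The one substantive gap is that the resummation is asserted (``should collapse'') rather than performed; it does close up. With $N_{\rm odd}=\lceil(1-m_1)/2\rceil$ and $N_{\rm even}=\lfloor(1-m_1)/2\rfloor$ one finds
\begin{equation*}
\sum_{m_1\le 0}\sum_{m_2}\dim\ker\phi^1\; t_1^{m_1}t_2^{m_2}
=\frac{t_2^4\left(1+t_1^{-1}\right)^2}{(1-t_2)^4\left(1-t_1^{-2}t_2^4\right)}\,,
\end{equation*}
and the rational identity
\begin{equation*}
\frac{1-t_1^2t_2^4}{(1-t_1)^2(1-t_2)^4}+\frac{t_2^4\left(1+t_1^{-1}\right)^2}{(1-t_2)^4\left(1-t_1^{-2}t_2^4\right)}
=\frac{(1-t_2^4)^2}{(1-t_1)^2(1-t_2)^4\left(1-t_1^{-2}t_2^4\right)}
\end{equation*}
then delivers both $CS^0$ (first summand expanded at $t_1=0$, counting $\operatorname{coker}\phi^0$) and $CS^1$ (first summand expanded at $t_1=\infty$, counting $\dim H^1(\cA,\cO(m_1,m_2))-\dim H^1(\cA,\cO(m_1{-}2,m_2{-}4))$), since the kernel series is the same Laurent series in either regime. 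Two smaller points: you ``take $CS^0$ as given'' from the displayed statement preceding Conjecture~4, but that statement is itself only conjectural in the paper --- your own machinery proves it, so you should derive it rather than import it; and the signs in $-CS^2$, $-CS^3$ are most cleanly justified by the functional equation $HS(t_1^{-1},t_2^{-1})=-HS(t_1,t_2)$ satisfied by this particular rational function, which is a one-line check and worth stating explicitly in place of the appeal to the general bookkeeping of Section~\ref{sec:EulerChar}.
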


\subsubsection{Other examples of Mori-dream spaces in Picard number 2}

\begin{example}
Let $X$ be a general hypersurface of bi-degree $(3,5)$ in $\IP^1\times \IP^3$, which corresponds to a threefold of general type. This example is a particular case of Conjecture~\ref{conj:hypersurface}.
Denoting, as before, $H_1 = \mathcal O_{\IP^1\times \IP^3}(1,0)|_X$ and $H_2 = \mathcal O_{\IP^1\times \IP^3}(0,1)|_X$, it was shown in Ref.~\cite{Ottem2015} that the divisor $5H_2-H_1$ is base-point free and defines a contraction to $\IP^2$. Moreover,
\begin{equation}
{\rm Eff}(X)={\rm Mov}(X)={\rm Nef}(X)=\IR_{\ge 0} H_1+\IR_{\ge 0} (5H_2-H_1)~.
\end{equation}
Note that in this case ${\rm Nef}(X)$ does not descend from the ambient variety. 
A generating function for the zeroth line bundle cohomology throughout the entire Picard group can be written as: 
\begin{equation}
\begin{aligned}
CS^0\left(X,\mathcal O_X \right)&=\left( \frac{(1-t_2^{5})^{4}}{(1-t_1)^2(1-t_2)^{4}(1-t_1^{-1}t_2^{5})^3}~,\begin{array}{cc} t_2& t_1 \\0&0 \end{array}\right) = \!\!\!\!\sum_{m_1,m_2\in \mathbb Z} h^0(X,\cO_X(m_1 H_1+m_2 H_2)) t_1^{m_1}t_2^{m_2}\\
CS^1\left(X,\mathcal O_X \right)&=\left( \frac{(1-t_2^{5})^{4}}{(1-t_1)^2(1-t_2)^{4}(1-t_1^{-1}t_2^{5})^3}~,\begin{array}{cc} t_2& t_1 \\0&\infty \end{array}\right) = \!\!\!\!\sum_{m_1,m_2\in \mathbb Z} h^1(X,\cO_X(m_1 H_1+m_2 H_2)) t_1^{m_1}t_2^{m_2}\\
-CS^2\left(X,\mathcal O_X \right)&=\left( \frac{(1-t_2^{5})^{4}}{(1-t_1)^2(1-t_2)^{4}(1-t_1^{-1}t_2^{5})^3}~,\begin{array}{cc} t_2& t_1 \\\infty&0 \end{array}\right) = \!\!\!\!\sum_{m_1,m_2\in \mathbb Z} h^2(X,\cO_X(m_1 H_1+m_2 H_2)) t_1^{m_1}t_2^{m_2}\\
-CS^3\left(X,\mathcal O_X \right)&=\left( \frac{(1-t_2^{5})^{4}}{(1-t_1)^2(1-t_2)^{4}(1-t_1^{-1}t_2^{5})^3}~,\begin{array}{cc} t_2& t_1 \\\infty&\infty \end{array}\right) = \!\!\!\!\sum_{m_1,m_2\in \mathbb Z} h^3(X,\cO_X(m_1 H_1+m_2 H_2)) t_1^{m_1}t_2^{m_2}
\end{aligned}
\end{equation}
The generating function for the zeroth cohomology dimensions follows from the presentation for the Cox ring of~$X$ given in~\cite{Ottem2015}; the other generating functions are conjectural. 
\end{example}

\begin{example}\label{ex:7885}
Moving away from hypersurfaces, let$X$ be a general Calabi-Yau three-fold in the deformation family defined by the configuration matrix
\begin{equation}\label{conf7885a}
\cicy{\IP^1 \\ \IP^4}{\ 1~&1~ \\ \,1~&4~}~,
\end{equation}
with position $7885$ in the list of CICY threefolds and Hodge numbers $(h^{1,1}(X),h^{1,2}(X))=(2,86)$.
The structure of the effective cone has been studied in Ref.~\cite{Brodie:2020fiq}, namely
\begin{equation}
\begin{aligned}
{\rm Eff}(X)=\IR_{\ge 0} H_1 +\IR_{\ge 0} &(H_2-H_1),~{\rm Mov}(X)= \IR_{\ge 0} H_1+\IR_{\ge 0} (4H_2-H_1)\\
&{\nef}(X)=\RR_{\ge 0}H_1+\RR_{\ge 0}H_2~,
\end{aligned}
\end{equation}
such that the effective cone consists of three Mori chambers, one of which is a Zariski chamber and the other two are associated with the nef cones of the two (non-isomorphic) birational models of $X$. Figure~\ref{fig:7885plot} displays some data about the zeroth line bundle cohomology.
\begin{center}
\begin{figure}[h]
\begin{center}
\includegraphics[width=7.1cm]{7885plot.pdf}
\caption{\itshape Zeroth line bundle cohomology data for general complete intersections in the family \eqref{conf7885a}. The numbers indicate cohomology dimensions; the location in the plot indicates the first Chern class.}
\label{fig:7885plot}
\end{center}
\end{figure}
\end{center}

The Hilbert-Poincar\'e series associated with the coordinate ring of $X$ is 
\begin{equation}
HS(X,t_1,t_2) = \frac{\left(1-t_1t_2\right)\left(1-t_1t_2^4\right)}{\left(1-t_1\right)^2 \left(1-t_2\right)^5}~.
\end{equation}

To construct the zeroth cohomology series, note that $X$ can be flopped to a complete intersection $X'$ in a toric variety~\cite{Brodie:2021toe} with a weight system and weights for the defining equations given by
\begin{equation}
\fl{X} \sim 
\begin{array}{c c c c c| c c }
z_1  & z_2 	& y_1 & \ldots & y_5 ~ 	& \fl{P}_1  & \fl{P}_2\\
\hline
~1  & ~1 & 0	  & \ldots & 0						& 1  & 1  \\
\!\!\!-1 & \!\!\!-4 & 1 & \ldots & 1					& 0  & 0 
\end{array} ~~
 \sim ~~~~
\begin{array}{c c c c c| c c }
z_1  & z_2 	& y_1 & \ldots & y_5 ~ 	& \fl{P}_1  & \fl{P}_2\\
\hline
~ \!\!\!\!\!\!\!\!\!-1  & ~ \!\!\!\!\!\!\!\!\!-1 & 0	  & \ldots & 0						& -1  & -1  \\
4 & 1 & 1 & \ldots & 1					&~~ 5  &~~ 5 
\end{array} 
\end{equation}
which corresponds to the Hilbert-Poincar\'e series
\begin{equation}
HS(X',t_1,t_2) = \frac{(1-t_1^{-1}t_2^5)^2}{(1-t_1^{-1}t_2)^2 (1-t_2)^5(1- t_1^{-1}t_2^4)}~.
\end{equation}
Both $X$ and the flopped threefold $X'$ are resolutions of the same singular manifold $\contr{X}$ which belongs to the deformation family $\IP^4[5]$ as discussed in \cite{Brodie:2021toe}. As such, we construct the generating function for the zeroth line bundle cohomology on $X$ (and also on $X'$) from the following contributions
\begin{equation*}
\begin{aligned}
CS^0(X,t_1,t_2) & =\left(\frac{(1-t_1t_2)(1-t_1t_2^4)}{(1-t_1)^2 (1-t_2)^5}~,\begin{array}{cc} t_2& t_1 \\0&0 \end{array}\right) + \\
&~~~~~~~~~~~~\left( \frac{(1-t_1^{-1}t_2^5)^2}{(1-t_1^{-1}t_2)^2 (1-t_2)^5(1- t_1^{-1}t_2^4)}~,\begin{array}{cc} t_2& t_1 \\0&0 \end{array}\right) -\left(\frac{1+t_2^5}{\left(1-t_2\right)^5}~,\begin{array}{c} t_2 \\0 \end{array}\right)~,
\end{aligned}
\end{equation*}
where the correction term is such that:
\begin{equation}
\begin{aligned}
\left.\frac{(1-t_1t_2)(1-t_1t_2^4)}{(1-t_1)^2 (1-t_2)^5}\right|_{t_1= 0}& \!\!+~ \left.\frac{(1-t_1^{-1}t_2^5)^2}{(1-t_1^{-1}t_2)^2 (1-t_2)^5(1- t_1^{-1}t_2^4)}\right|_{t_1= \infty } - \frac{1+t_2^5}{\left(1-t_2\right)^5} = HS(\IP^4[5],t_2)
\end{aligned}
\end{equation}
Note that the information about the Zariski chamber and the Mori chamber associated with the nef cone of $X'$ is encoded in the second term, which reflects the fact that the dimensions of linear systems of divisors belonging to the Zariski chamber remain constant upon removing the fixed parts. 

The three contributions can be combined into a single rational function, such that:
\begin{equation}\label{gf7885}
\begin{aligned}
CS^0(X,t_1,t_2) & =\left(\frac{\left(1-t_2\right)^2\left(1-t_2^4\right)^2}{\left(1-t_1\right)^2 \left(1-t_2\right)^5\left(1-t_1^{-1}t_2\right)\left(1-t_1^{-1}t_2^4\right)},\begin{array}{cc} t_2& t_1 \\0&0 \end{array}\right) 
\end{aligned}
\end{equation}
Interpreted as the Hilbert-Poincar\'e series associated with the Cox ring of $X$, this rational function leads to a representation of $X$ as a complete intersection in a toric variety with a weight system and weights for the defining equations given by
\begin{equation}
\begin{array}{c c c c c c c| c c c c }
x_1  & x_2 	& y_1 & \ldots & y_5 ~ & z_1  & z_2 	& Q_1  & Q_2& Q_3  & Q_4\\
\hline
1  & 1 & 0	  & \ldots & 0		& \!\!\!\!-1&\!\!\!\!-1				& 0 & 0& 0 & 0  \\
0 & 0 & 1 & \ldots & 1		&1&4			& 1 & 1 & 4&4
\end{array} 
\end{equation}
In this representation, the flop $X\xdasharrow{~~~~} X'$ is induced from a flop of the ambient toric variety. The generating function \eqref{gf7885} also encodes the higher cohomologies (see \fref{fig:X7885_intro} for some data on the first cohomology).

\begin{con}\label{con:7885} ${\rm (Conjecture~5)}$ Let $X$ be a general complete intersection of two hypersurfaces of bi-degrees $(1,1)$ and $(1,4)$ in $\IP^1\times \IP^4$, belonging to the deformation family with configuration matrix 
\begin{equation}\label{conf7885}
\cicy{\IP^1 \\ \IP^4}{~1& 1~\\ ~1& 4~}~.
\end{equation}
Let $(H_1,H_2)$ be the basis of ${\rm Pic}(X)$ where $H_1 = \mathcal O_{\IP^1\times \IP^4}(1,0)|_X$ and $H_2 = \mathcal O_{\IP^1\times \IP^4}(0,1)|_X$. In this basis, the cohomology series are encoded by the following generating function: 
\begin{equation}\label{cs07885}
\begin{aligned}
CS^0(X,\mathcal O_X)&=\left( \frac{\left(1-t_2\right)^2\left(1-t_2^4\right)^2}{\left(1-t_1 \right)^2 \left(1-t_2 \right)^5 \left(1-t_1^{-1}t_2 \right) \left( 1-t_1^{-1}t_2^4 \right)} \,,\!\begin{array}{cc} t_2& t_1 \\0&0 \end{array}\!\! \right) \\
CS^1(X,\mathcal O_X)&=\left( \frac{\left(1-t_2\right)^2\left(1-t_2^4\right)^2}{\left(1-t_1 \right)^2 \left(1-t_2 \right)^5 \left(1-t_1^{-1}t_2 \right) \left( 1-t_1^{-1}t_2^4 \right)} \,,\!\begin{array}{cc} t_2& t_1 \\ \infty&0 \end{array}\!\! \right) \\
CS^2(X,\mathcal O_X)&=\left( \frac{\left(1-t_2\right)^2\left(1-t_2^4\right)^2}{\left(1-t_1 \right)^2 \left(1-t_2 \right)^5 \left(1-t_1^{-1}t_2 \right) \left( 1-t_1^{-1}t_2^4 \right)} \,,\!\begin{array}{cc} t_2& t_1 \\ 0&\infty \end{array}\!\! \right) \\
CS^3(X,\mathcal O_X)&=\left( \frac{\left(1-t_2\right)^2\left(1-t_2^4\right)^2}{\left(1-t_1 \right)^2 \left(1-t_2 \right)^5 \left(1-t_1^{-1}t_2 \right) \left( 1-t_1^{-1}t_2^4 \right)} \,,\!\begin{array}{cc} t_2& t_1 \\ \infty&\infty \end{array}\!\! \right)
\end{aligned}
\end{equation}
\end{con}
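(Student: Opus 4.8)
The plan is to prove the four identities in stages: establish $CS^0$ rigorously from the Cox ring, reduce $CS^2$ and $CS^3$ to $CS^1$ and $CS^0$ by Serre duality, and finally attack $CS^1$, which carries the genuinely new content. Since $X$ is Calabi--Yau and (as in Example~\ref{ex:7885}) a Mori dream space whose Cox ring is presented as a complete intersection cut out by four equations $Q_1,\dots,Q_4$ of degrees $(0,1),(0,1),(0,4),(0,4)$ in the toric variety with the stated weight system, the generality of $X$ makes these a regular sequence. Hence the multigraded Hilbert series of $\mathrm{Cox}(X)$ is computed by additivity of the Hilbert--Poincar\'e series along the Koszul complex, giving the quotient of $\prod_{\text{eqn}}(1-t^{\deg})$ by $\prod_{\text{coord}}(1-t^{\mathrm{wt}})$, which is exactly the rational function in~\eqref{cs07885}. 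Because $\mathrm{Cox}(X)=\bigoplus_L H^0(X,L)$, its expansion around $t_2=0$ and then $t_1=0$ — the order mattering precisely because of the factors $(1-t_1^{-1}t_2)$ and $(1-t_1^{-1}t_2^4)$ coming from the exceptional coordinates $z_1,z_2$ — reproduces $\sum_L h^0(X,L)\,t^L$, supported on $\IR_{\ge0}H_1+\IR_{\ge0}(4H_2-H_1)$. This identifies the first line of~\eqref{cs07885} with $CS^0$.

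Next I would reduce the top two cohomologies to the bottom two. As $K_X=\cO_X$, Serre duality gives $h^i(X,L)=h^{3-i}(X,-L)$, hence $CS^{3-i}(t_1,t_2)=CS^{i}(t_1^{-1},t_2^{-1})$ as formal series. On the other hand the rational function $HS$ in~\eqref{cs07885} satisfies a functional equation under inversion: the four numerator factors contribute an even number of signs and the nine denominator factors an odd number, while the monomial prefactors cancel because $X$ is Calabi--Yau, so that $HS(t_1^{-1},t_2^{-1})=-HS(t_1,t_2)$. Substituting this into the definition of expansion-around-infinity (invert, expand at the origin, invert back) matches $(HS,(\infty,0))$ and $(HS,(\infty,\infty))$ with the Serre duals of the $CS^1$ and $CS^0$ series; once $CS^0$ and $CS^1$ are known, the identities for $CS^2$ and $CS^3$ follow, with the overall signs in front of them forced by this functional equation.

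It remains to compute $CS^1$, which is the crux. I would follow the strategy that proved Theorem~\ref{thm:Hirz}: realise $X$ inside $\mathcal A=\IP^1\times\IP^4$ through the Koszul resolution
\begin{equation*}
0\to\cO_{\mathcal A}(L{-}D_1{-}D_2)\to\cO_{\mathcal A}(L{-}D_1)\oplus\cO_{\mathcal A}(L{-}D_2)\to\cO_{\mathcal A}(L)\to\cO_X(L)\to0,
\end{equation*}
with $D_1=(1,1)$, $D_2=(1,4)$, and compute $H^\bullet(X,\cO_X(L))$ from the associated hypercohomology spectral sequence. By Künneth the ambient line bundle cohomology on $\IP^1\times\IP^4$ is nonzero only in degrees $0,1,4,5$; since $\dim X=3$ these feed all four groups $h^0,\dots,h^3$, with $h^0,h^1$ arising from $H^0,H^1$ of the right-hand terms and $h^2,h^3$ from $H^4,H^5$ of the left-hand term. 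Counting the surviving Čech (Laurent-monomial) representatives degree by degree and resumming them should yield precisely the expansion of $HS$ around $(\infty,0)$, supported on the locus where $h^1$ is nonvanishing. Equivalently, one may run the cohomCalg local-cohomology count directly on the seven-dimensional toric Cox ambient together with its complete-intersection structure.

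The main obstacle is exactly the difficulty the paper flags after Theorem~\ref{thm:Hirz}: tracking the Čech representatives through the Koszul spectral sequence. Here the complex has length two and the toric ambient has nontrivial Stanley--Reisner combinatorics, so the higher differentials (for instance $d_2\colon E_2^{-2,4}\to E_2^{0,3}$ and its analogues) can be nonzero and must be shown to cancel exactly against the contributions that would otherwise spoil the clean rational form. Controlling these differentials — equivalently, proving that the alternating monomial count collapses to the single rational function $HS$ expanded at $(\infty,0)$ — is the hard step; the Serre-duality reduction and the $CS^0$ computation are routine by comparison.
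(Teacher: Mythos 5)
First, be aware that the statement you are addressing is labelled a conjecture: the paper offers no proof, only the construction of the rational function in Example~\ref{ex:7885} together with computational evidence. Your plan is therefore necessarily more ambitious than what the paper does, and the two routes to $CS^0$ are genuinely different. The paper builds \eqref{gf7885} from the birational data --- it adds the Hilbert--Poincar\'e series of the two birational models $X$ and $X'$ (the latter read off from a flopped toric embedding) and subtracts a correction term that restricts on the flop wall to $HS(\IP^4[5],t_2)$ --- and only \emph{afterwards} reinterprets the combined rational function as the Hilbert series of a Cox ring presented as a complete intersection in a toric variety. You run this logic in reverse: you take the toric Cox presentation as given and compute its Hilbert series by the Koszul resolution. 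That computation is fine, but the presentation itself is nowhere established (Ottem's results, as used in the paper, cover hypersurfaces in products of projective spaces, not this codimension-two family), so your ``rigorous'' first stage rests on exactly the same unproven input as the paper's construction and should be stated as a hypothesis. Your third stage --- computing $CS^1$ by pushing \v{C}ech representatives through the Koszul spectral sequence, in the spirit of Theorem~\ref{thm:Hirz} --- is the right target, and you correctly identify the control of the higher differentials as the open core of the problem; but since it is not carried out, the proposal is a programme rather than a proof, which matches the conjectural status of the statement.

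The one concrete error is in your Serre-duality stage. Your functional equation $HS(t_1^{-1},t_2^{-1})=-HS(t_1,t_2)$ is correct (four numerator factors, nine denominator factors, monomial prefactors cancelling by the Calabi--Yau condition), but carried through the definition of expansion around infinity it yields $(HS,(0,\infty))=-CS^{2}$ and $(HS,(\infty,\infty))=-CS^{3}$, not the unsigned identities displayed in \eqref{cs07885}. A sanity check: the coefficient of $t_1^{0}t_2^{-1}$ in $(HS,(\infty,\infty))$ equals $-h^0(X,H_2)=-5$, whereas $h^3(X,-H_2)=h^0(X,H_2)=+5$. So your claim that the functional equation ``matches'' the stated identities, with the signs ``forced'' to agree, is false as written; what your argument actually shows is that the statement is missing the $(-1)^{\dim X}$ factors that appear in the paper's parallel Conjectures~\ref{con:coh_intro1}, \ref{con:7887tuned_intro} and \ref{con:bicubic_intro1}. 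You should either flag this as a sign typo in the conjecture or retract the claimed agreement --- as it stands, your (correct) duality computation disproves two of the four displayed identities rather than establishing them.
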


\begin{example} 
In this example $X$ is a general complete intersection Calabi-Yau (CICY) threefold belonging to the deformation family with configuration matrix 
\begin{equation}
\cicy{\IP^2 \\ \IP^5}{0& 0& 2 & 1\\ 2& 2& 1& 1}~.
\end{equation}
In the list of CICY threefolds~\cite{Candelas:1987kf}, this configuration matrix appears at position 7643, and corresponds to the pair of  Hodge numbers $(h^{1,1}(X),h^{1,2}(X))=(2,46)$.
As discussed in Ref.~\cite{Brodie:2021nit}, 
\begin{equation}
{\eff}(X)=\mov(X)=\RR_{\ge 0} H_1+\RR_{\ge 0} (3H_2-H_1),\mbox{ and }{\nef}(X)=\RR_{\ge 0}H_1+\RR_{\ge 0}H_2~,
\end{equation}
where $H_1 = \mathcal O_{\IP^2\times \IP^5}(1,0)|_X$ and $H_2 = \mathcal O_{\IP^2\times \IP^5}(0,1)|_X$. The structure of the effective cone is given by
\begin{equation}
{\eff}(X)=\left(\RR_{\ge 0}H_1+\RR_{\ge 0}H_2 \right) \cup \left(\RR_{\ge 0} H_2+\RR_{\ge 0} (3H_2-H_1)\right)~,
\end{equation}
the two Mori chambers corresponding to two isomorphic birational models of $X$. 
The zeroth line bundle cohomology series is given by the following generating function:
\begin{equation}
\begin{aligned}
CS^0(X,t_1,t_2) & = HS(X,t_1,t_2)+HS(X,t_1^{-1}t_2^3,t_2) - {\rm corr.~term}\\
&=\left(\frac{ (1-t_2^2)^2 (1-t_1^2 t_2) (1-t_1 t_2)}{(1-t_1)^3  (1-t_2)^6}~,\begin{array}{cc} t_2& t_1 \\0&0 \end{array}\right) + \\
&~~~~~~~~~~~~\left(\frac{(1-t_2^2)^2 (1-(t_1^{-1} t_2^3)^2 t_2) (1-(t_1^{-1}t_2^3)t_2)}{(1-t_1^{-1}t_2^3)^3 (1-t_2)^6 }~,\begin{array}{cc} t_2& t_1 \\0&0 \end{array}\right) -\left(\frac{(1-t_2^2)^3}{(1-t_2)^6}~,\begin{array}{c} t_2 \\0 \end{array}\right)~.
\end{aligned}
\end{equation}

The correction term is such that:
\begin{equation}
\begin{aligned}
\left.\frac{ \left(1-t_2^2\right)^2 \left(1-t_1^2 t_2\right) \left(1-t_1 t_2\right)}{\left(1-t_1\right)^3  \left(1-t_2\right)^6}\right|_{t_1= 0}& \!\!+~ \left.\frac{(1-t_2^2)^2 (1-(t_1^{-1} t_2^3)^2 t_2) (1-(t_1^{-1}t_2^3)t_2)}{(1-t_1^{-1}t_2^3)^3 (1-t_2)^6 }\right|_{t_1= \infty } - \frac{(1-t_2^2)^3}{(1-t_2)^6} \\
&  =  \frac{(1 - t_2^4)(1-t_2^2)^2}{(1-t_2^2)(1-t_2)^6}= HS(\IP_{[2:1:1:1:1:1:1]}[4~ 2~ 2],t_2)~,
\end{aligned}
\end{equation}
which is the Hilbert-Poincar\'e series 
associated with the singular variety involved in the flop~\cite{Brodie:2021toe}. 
\end{example}

The three contributions can be combined into a single generating function $G(X,t_1,t_2)$, since the expansions are done around the same points and in the same order. 
Expanded differently, the same generating function gives the higher cohomology dimensions: 
\begin{equation}\label{highercoh7643}
\left(G(X,t_1,t_2)~,\begin{array}{cc} t_2& t_1 \\0&\infty \end{array}\right)+\left(G(X,t_1,t_2)~,\begin{array}{cc} t_2& t_1 \\\infty&0 \end{array}\right) = CS^1(X,t_1,t_2)-CS^2(X,t_1,t_2)~.
\end{equation}
Since in this case there are no line bundles with non-vanishing first and second cohomology, the two cohomology series can be disentangled and correspond to the positive and, respectively, the negative coefficients in Eq.~\eqref{highercoh7643}.

\end{example}

\subsection{Examples of non-Mori dream spaces}
The examples presented in this section correspond to Calabi-Yau threefolds which admit an infinite sequence of isomorphic flops. 

\subsubsection{Examples in Picard number 2}
\begin{example} 
Let $X$ be the general complete intersection Calabi-Yau threefold belonging to the deformation family
\begin{equation}\label{conf7644}
\cicy{\IP^4 \\ \IP^4}{2& 0& 1 & 1& 1\\ 0& 2& 1& 1& 1}
\end{equation}
with identifier $7644$ in the CICY list and Hodge numbers $(h^{1,1}(X),h^{1,2}(X))=(2,46)$. The effective cone decomposes into a doubly infinite sequence of Mori chambers corresponding to the nef cones of isomorphic Calabi-Yau threefolds connected to $X$ through a sequence of flops. With $H_1 = \mathcal O_{\IP^4\times \IP^4}(1,0)|_X$ and $H_2 = \mathcal O_{\IP^4\times \IP^4}(0,1)|_X$, a general term in this sequence of Mori chambers is
\begin{equation} 
K^{(n)} = \IR_{\geq 0} (a_{n+1} H_1-a_{n}H_2) + \IR_{\geq 0} (a_n H_1-a_{n-1}H_2)
\end{equation}
where $a_n$ is given by 
\begin{equation} 
a_n = \frac{\left(3+2 \sqrt{2}\right)^n-\left(3-2 \sqrt{2}\right)^n}{4 \sqrt{2}}~, ~~(a_n)=\ldots -204,-35,-6,-1,0,1,6,35,204,\ldots
\end{equation}
and $K^{(0)}={\rm Nef}(X)$. The structure of the effective cone is schematically represented in \fref{fig:extendedKC}.

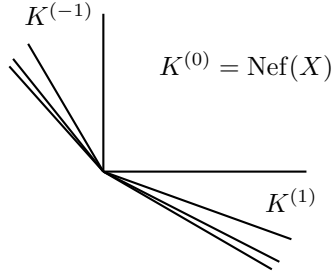
\begin{figure}[H]
\begin{center}
\begin{tikzpicture}[baseline={([yshift=-0.2ex]current bounding box.center)}]
\draw[-,thick] (0,0) -- (0,2.1);
\draw[-,thick] (0,0) -- (2.7,0);
\draw[-,thick] (0,0) -- (-1.,1.7);
\draw[-,thick] (0,0) -- (-1.2,1.5);
\draw[-,thick] (0,0) -- (-1.25,1.4);
\draw[-,thick] (0,0) -- (2.5,-.9);
\draw[-,thick] (0,0) -- (2.34,-1.2);
\draw[-,thick] (0,0) -- (2.24,-1.3);
\node at (1.9,1.4) {$K^{(0)}={\rm Nef}(X)$};
\node at (-.59,2.1) {$K^{(-1)}$};
\node at (2.5,-.38) {$K^{(1)}$};
\end{tikzpicture}
\caption{Movable cone for a general complete intersection Calabi-Yau threefold in the deformation family~\eqref{conf7644}, admitting an infinite sequence of flops.}
\label{fig:extendedKC}
\end{center}
\end{figure}

The generating function for cohomology can be written in terms of the Hilbert-Poincar\'e series associated with the homogeneous coordinate ring of $X$,
\begin{equation}
HS(X,t_1,t_2) = \frac{\left(1-t_1^2\right) \left(1-t_2^2\right) \left(1-t_1 t_2\right)^3}{\left(1-t_1\right)^5 \left(1-t_2\right)^5}
\end{equation} 
and the correction term  
\begin{equation}
\frac{\left(1-t^2\right) \left(1-t^3\right)}{\left(1-t\right)^5}~.
\end{equation} 
Concretely, we define the functions
\begin{equation}
\begin{aligned}
G_n(t_1,t_2)= ~& \frac{ \Big(1-(t_1^{a_{n+1}} t_2^{-a_{n}})^2\Big) \Big(1-(t_1^{a_{n}} t_2^{-a_{n-1}})^2\Big)  \left(1-t_1^{a_n+a_{n+1}} t_2^{-a_{n-1}-a_{n}}\right)^3}{ \left(1-t_1^{a_{n+1}} t_2^{-a_{n}}\right)^5\big(1-t_1^{a_n} t_2^{-a_{n-1}}\big)^5} \\[4pt]
C_n(t_1,t_2)=~ & \frac{\Big(1-(t_1^{a_{n}} t_2^{-a_{n-1}})^2\Big)\Big(1-(t_1^{a_{n}} t_2^{-a_{n-1}})^3\Big)}{\big(1-t_1^{a_n} t_2^{-a_{n-1}}\big)^5}~,
\end{aligned}
\end{equation}
in terms of which the zeroth line bundle cohomology series on $X$ takes the form: 
\begin{equation}
\begin{aligned}
CS^0(X,\mathcal O_X)&  =  \left(\sum_{n=-\infty}^{0} G_n(t_1,t_2){+}C_n(t_1,t_2)\,,\!\begin{array}{cc} t_2& t_1 \\0&0 \end{array}\!\!\right)+\left(\sum_{n=1}^{\infty} G_n(t_1,t_2){+}C_n(t_1,t_2)\,,\!\begin{array}{cc} t_1& t_2 \\0&0 \end{array}\!\!\right)~. 
\end{aligned}
\end{equation}

The correction term is such that, e.g.,~along the wall separating $K^{(-1)}$ from $K^{(0)}$ 
\begin{equation}
\begin{aligned}
\left.\frac{(1-t_1^2) (1-t_2^2) (1-t_1 t_2)^3}{(1-t_1)^5 (1-t_2)^5}\right|_{t_1= 0}& \!\!+~ \left.\frac{(1-(t_1^{-1}t_2^6)^2) (1-t_2^2) (1-(t_1^{-1}t_2^6) t_2)^3}{(1-(t_1^{-1}t_2^6))^5 (1-t_2)^5}\right|_{t_1= \infty } - \frac{(1-t_2^2) (1-t_2^3)}{(1-t_2)^5}
\end{aligned}
\end{equation}
corresponds to the Hilbert-Poincar\'e series of the singular manifold involved in the corresponding flop, which can be represented as the intersection of a sextic and a quadratic polynomial in the coordinates of the weighted projective space $\IP_{[3:1:1:1:1:1]}$, namely
\begin{equation}
HS(\IP_{[3:1:1:1:1:1]}[6~2], t) = \frac{(1 - t^6) (1 - t^2)}{(1 - t^3) (1 - t)^5}~.
\end{equation}
\end{example}

For the higher cohomologies we propose the following. 

\begin{con}\label{con:inf_flops} {\rm(Conjecture~7)}
Let $X$ be a general complete intersection Calabi-Yau threefold in the deformation family given by the configuration matrix 
\begin{equation*}
\cicy{\IP^4 \\ \IP^4}{2& 0& 1 & 1& 1\\ 0& 2& 1& 1& 1}
\end{equation*}
and let $H_1 = \mathcal O_{\IP^4\times \IP^4}(1,0)|_X$ and $H_2 = \mathcal O_{\IP^4\times \IP^4}(0,1)|_X$.
The effective cone decomposes into a doubly infinite sequence of Mori chambers corresponding to the nef cones of isomorphic Calabi-Yau threefolds connected to $X$ through a sequence of flops, of the form $K^{(n)} = \IR_{\geq 0} (a_{n+1} H_1-a_{n}H_2) + \IR_{\geq 0} (a_n H_1-a_{n-1}H_2)$, 
with $a_n$ given by 
\begin{equation} 
a_n = \frac{\left(3+2 \sqrt{2}\right)^n-\left(3-2 \sqrt{2}\right)^n}{4 \sqrt{2}}~, ~~(a_n)=\ldots -204,-35,-6,-1,0,1,6,35,204,\ldots
\end{equation}
such that $K^{(0)}={\rm Nef}(X)$. A generating function for all line bundle cohomology dimensions can be written in terms of the functions
\begin{equation}
\begin{aligned}
G_n(t_1,t_2)= ~& \frac{ (1-(t_1^{a_{n+1}} t_2^{-a_{n}})^2) (1-(t_1^{a_{n}} t_2^{-a_{n-1}})^2)  (1-t_1^{a_n+a_{n+1}} t_2^{-a_{n-1}-a_{n}})^3}{ (1-t_1^{a_{n+1}} t_2^{-a_{n}})^5(1-t_1^{a_n} t_2^{-a_{n-1}})^5} \\[4pt]
C_n(t_1,t_2)=~ & \frac{(1-(t_1^{a_{n}} t_2^{-a_{n-1}})^2)(1-(t_1^{a_{n}} t_2^{-a_{n-1}})^3)}{(1-t_1^{a_n} t_2^{-a_{n-1}})^5}~,
\end{aligned}
\end{equation}
as follows
\begin{equation}
\begin{aligned}
CS^0(X,\mathcal O_X)&  =  \left(\sum_{n=-\infty}^{0} G_n(t_1,t_2){+}C_n(t_1,t_2)\,,\!\begin{array}{cc} t_2& t_1 \\0&0 \end{array}\!\!\right)+\left(\sum_{n=1}^{\infty} G_n(t_1,t_2){+}C_n(t_1,t_2)\,,\!\begin{array}{cc} t_1& t_2 \\0&0 \end{array}\!\!\right) \\
CS^1(X,\mathcal O_X)&  =  \left(\sum_{n=-\infty}^{0} G_n(t_1,t_2){+}C_n(t_1,t_2)\,,\!\begin{array}{cc} t_2& t_1 \\0&\infty \end{array}\!\!\right) /.\,\{\text{remove terms } t_1^\alpha t_2^\beta \text{ with }\alpha+\beta<0 \} ~+ \\
&~~~~~~~~ \left(\sum_{n=0}^{\infty} G_n(t_1,t_2){+}C_n(t_1,t_2)\,,\!\begin{array}{cc} t_1& t_2 \\0&\infty \end{array}\!\!\right) /.\,\{\text{remove terms } t_1^\alpha t_2^\beta \text{ with }\alpha+\beta<0 \}  \\
1-CS^2(X,\mathcal O_X)&  = \left(\sum_{n=-\infty}^{0} G_n(t_1,t_2){+}C_n(t_1,t_2)\,,\!\begin{array}{cc} t_2& t_1 \\ \infty&0 \end{array}\!\!\right) /.\,\{\text{remove terms } t_1^\alpha t_2^\beta \text{ with }\alpha+\beta>0 \}~+\\
&~~~~~~~~ \left(\sum_{n=1}^{\infty} G_n(t_1,t_2){+}C_n(t_1,t_2)\,,\!\begin{array}{cc} t_1& t_2 \\ \infty&0 \end{array}\!\!\right)  /.\,\{\text{remove terms } t_1^\alpha t_2^\beta \text{ with }\alpha+\beta>0 \} \\
1-CS^3(X,\mathcal O_X)&  =  \left(\sum_{n=-\infty}^{0} G_n(t_1,t_2){+}C_n(t_1,t_2)\,,\!\begin{array}{cc} t_2& t_1 \\ \infty& \infty \end{array}\!\!\right)+\left(\sum_{n=0}^{\infty} G_n(t_1,t_2){+}C_n(t_1,t_2)\,,\!\begin{array}{cc} t_1& t_2 \\ \infty&\infty \end{array}\!\!\right)~. \\
\end{aligned}
\end{equation}
\end{con}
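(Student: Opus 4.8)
The plan is to reduce all four series to the zeroth and first cohomology by Serre duality and then to establish those two separately, the first being the genuinely difficult piece. Since $X$ is Calabi--Yau, $K_X=\cO_X$, so Serre duality gives $h^3(X,L)=h^0(X,L^{-1})$ and $h^2(X,L)=h^1(X,L^{-1})$ for every line bundle $L$. At the level of generating functions this is exactly the replacement $t_i\mapsto t_i^{-1}$, i.e.\ expansion around $\infty$ rather than $0$, which is why all four series are governed by the single object $\sum_n(G_n+C_n)$ expanded around the four corners of $\{0,\infty\}^{\times 2}$. The shifts by $1$ in the $CS^2$ and $CS^3$ lines, together with the overall signs, will come from two bookkeeping effects: the trivial bundle $\cO_X$ lies on the common boundary of the $n\le 0$ and $n\ge 1$ half-sums and is therefore counted twice, contributing $h^0(\cO_X)=h^3(\cO_X)=1$; and expansion around $\infty$ introduces the sign $(-1)^{\dim X+1}$ familiar from the $\IP^n$ computation in the Remark following \eqref{eq:HSPn}. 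So the first task is to prove the formulas for $CS^0$ and $CS^1$ and to check that the duality-induced corrections reproduce the $CS^2,CS^3$ lines.

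For the zeroth cohomology I would argue chamber by chamber. By Ref.~\cite{Brodie:2020fiq} the effective cone is swept out by the Mori chambers $K^{(n)}$, each the image of $\nef(X_n)$ under the natural identification of Picard groups, for a Calabi--Yau model $X_n$ isomorphic to $X$ reached from $X$ by a finite sequence of flops. For $L$ whose class lies in the interior of $K^{(n)}$ the dimension of the linear system is invariant under those flops, so $h^0(X,L)=h^0(X_n,L_n)$, and since $L_n$ is then nef and big on the Calabi--Yau $X_n$, Kawamata--Viehweg vanishing gives $h^0(X_n,L_n)=\chi(X_n,L_n)$. The generating function of $\chi(X_n,\cdot)$ over $\nef(X_n)$ is precisely $HS(X)$ written in the polarisation adapted to $K^{(n)}$, namely $G_n$: indeed $G_n$ is obtained from $HS(X,t_1,t_2)=\tfrac{(1-t_1^2)(1-t_2^2)(1-t_1t_2)^3}{(1-t_1)^5(1-t_2)^5}$ by sending the two counting variables to the monomials $t_1^{a_{n+1}}t_2^{-a_n}$ and $t_1^{a_n}t_2^{-a_{n-1}}$ that represent the generators of $K^{(n)}$. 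The $C_n$ are correction terms attached to the walls $K^{(n)}\cap K^{(n-1)}$: along such a wall the adjacent contributions must restrict to the Hilbert--Poincar\'e series of the singular threefold obtained by contracting the flopping curves, exactly as in Example~\ref{ex:7885}, and subtracting $C_n$ removes the double count. Assembling these into the two half-sums, expanded in opposite orders so that each series converges in the appropriate half-plane, yields the claimed $CS^0$; the finitely many classes on the two extreme boundary rays, where the bundle is nef but not big, are handled by a separate sequence chase.

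The heart of the problem is the first cohomology. On $\nef(X)=K^{(0)}$ Kawamata--Viehweg gives $h^1(X,L)=0$, and one expects $h^1$ to vanish throughout the interior of $\mov(X)$ and, dually, throughout $-\mov(X)$, so that $h^1$ is supported in the complementary region; the instruction to remove the monomials with $\alpha+\beta<0$ encodes exactly this support, and its compatibility with the mirror instruction $\alpha+\beta>0$ in the $CS^2$ line is forced by $h^2(X,L)=h^1(X,L^{-1})$. Already the vanishing of $h^1$ over the chambers $K^{(n)}$ with $n\neq 0$ is nontrivial, since $h^1$, unlike $h^0$, is not manifestly preserved by the flops. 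To compute $h^1$ where it is nonzero I would use the Koszul resolution of $\cO_X$ inside $\IP^4\times\IP^4$: the hypercohomology spectral sequence expresses $H^\bullet(X,L)$ through the cohomology of twists of $\cO_{\IP^4\times\IP^4}$, known by the K\"unneth and Bott formulas, and for generic defining polynomials the differentials should have maximal rank, determining $h^1(X,L)$ combinatorially.

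Two difficulties make this hard to carry out uniformly, and they are the main obstacle. First, the generating function is an infinite sum of rational functions, so one must verify that it defines a well-posed formal Laurent series: that in each expansion regime only finitely many $G_n,C_n$ contribute to any fixed monomial, and that the two opposite expansion orders are internally consistent. Second, the maximal-rank statement for the Koszul differentials must be proved uniformly over all infinitely many chambers; a chamber-independent argument would presumably have to transport information along the derived equivalences $D^b(X_n)\cong D^b(X_{n+1})$ induced by the flops, but because these equivalences do not send line bundles to line bundles, they do not directly carry the $h^1$-count from one chamber to the next. It is this lack of a uniform handle on the first cohomology across the infinite flop orbit, rather than any single computation, that keeps the statement at the level of a conjecture.
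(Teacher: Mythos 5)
The statement you are addressing is Conjecture~\ref{con:inf_flops}, and the paper does not prove it: its only supporting argument is the Example immediately preceding it, which constructs the $CS^0$ line in essentially the way you propose. There, the decomposition of the effective cone into the chambers $K^{(n)}$ is quoted from Ref.~\cite{Brodie:2020fiq}; on each chamber $h^0$ is computed via flop-invariance of linear systems together with vanishing on the corresponding birational model; $G_n$ is the Hilbert--Poincar\'e series $HS(X,t_1,t_2)=\tfrac{(1-t_1^2)(1-t_2^2)(1-t_1t_2)^3}{(1-t_1)^5(1-t_2)^5}$ with the counting variables replaced by the monomials representing the generators of $K^{(n)}$; and the wall corrections $C_n$ are fixed by requiring that the two adjacent contributions minus the correction restrict, on each wall, to $HS(\IP_{[3:1:1:1:1:1]}[6\ 2],t)$, the Hilbert--Poincar\'e series of the singular threefold of the flop. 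Your $CS^0$ argument is therefore essentially the paper's own, with the same unresolved points (boundary rays, well-posedness of the doubly infinite sum). For the higher cohomologies the paper offers nothing beyond explicit computer calculations, so your Serre-duality reduction and the Koszul spectral-sequence plan for $h^1$ go beyond the text; you have also correctly diagnosed the genuine obstruction --- the absence of any uniform control of $h^1$ across the infinite flop orbit --- which is exactly why the statement is left as a conjecture. In short: your proposal is a strategy consistent with the paper, not a proof, and neither is the paper's discussion.

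Two bookkeeping slips are worth correcting if you pursue this. First, the sign produced by expanding around infinity for a complete intersection in a product of $p$ projective spaces is $(-1)^{\dim X+p}$ (see the Serre-duality manipulation in Section~\ref{sec:EulerChar}); here $\dim X=3$ and $p=2$, giving $-1$, which is what turns the last two lines into $1-CS^2$ and $1-CS^3$. Your quoted factor $(-1)^{\dim X+1}=+1$ has the wrong sign, and indeed one checks directly that $G_n(u^{-1},v^{-1})=-G_n(u,v)$ and $C_n(u^{-1})=-C_n(u)$. Second, your explanation of the additive constants via double-counting of $\cO_X$ does not match the stated index ranges: the two half-sums overlap at $n=0$ in the $CS^1$ and $CS^3$ lines but are disjoint in the $CS^0$ and $CS^2$ lines, so the $+1$ appearing in the $CS^2$ line must have a different origin.
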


The following example is very similar and involves an infinite sequence of isomorphic flops. The difference, however, will be that the correction terms required at the two boundaries of each Mori chamber are different. 

\begin{example} 
Let $X$ be a general CICY threefold belonging to the family with configuration matrix 
\begin{equation}\label{conf7726}
\cicy{\IP^3 \\ \IP^5}{0& 1& 1 & 1& 1\\ 2& 1& 1& 1& 1}
\end{equation}
and identifier $7726$ in the CICY list. The non-trivial Hodge numbers are $(h^{1,1}(X), h^{1,2}(X))=(2,50)$. As in the previous example, the effective cone consists of a doubly infinite sequence of Mori chambers corresponding to the nef cones of isomorphic Calabi-Yau threefolds connected to $X$ through a sequence of flops. With $H_1 = \mathcal O_{\IP^3\times \IP^5}(1,0)|_X$ and $H_2 = \mathcal O_{\IP^3\times \IP^5}(0,1)|_X$, the movable cone consists of the gluing of Mori chambers of the type $\ldots K^{(n-1)}, L^{(n)}, K^{(n)},L^{(n+1)}\ldots$, with
\begin{equation} 
\begin{aligned}
K^{(n)} &= \IR_{\geq 0} (a_{n+1} H_1-b_{n}H_2)+\IR_{\geq 0} (c_{n} H_1-a_{n}H_2) ~,\\
L^{(n)} &= \IR_{\geq 0} (c_{n} H_1-a_{n}H_2)+ \IR_{\geq 0} (a_{n} H_1-b_{n-1}H_2) ~.
\end{aligned}
\end{equation}
The $a_n, b_n$ and $c_n$ integers given by the expressions
\begin{equation} 
\begin{aligned}
a_n &= \left(\frac{\sqrt{30}}{10} - \frac{1}{2}\right)(11 + 2\sqrt{30})^n - \left(\frac{\sqrt{30}}{10} + \frac{1}{2}\right)  (11 - 2\sqrt{30})^n ~,~~(a_n)=\ldots -505, -23, -1, 1, 23\ldots\\
b_n &= \frac{1}{4}\frac{\sqrt{30}}{10}\left((11 + 2\sqrt{30})^n - (11 - 2\sqrt{30})^n\right) ~,~~(b_n)=\ldots -66, -3, 0, 3, 66\ldots\\
c_n &= \frac{2}{3}\frac{\sqrt{30}}{10}\left((11 + 2\sqrt{30})^n - (11 - 2\sqrt{30})^n\right) ~,~~(c_n)=\ldots -176, -8, 0, 8, 176\ldots\\
\end{aligned}
\end{equation}
such that $K^{(0)}={\rm Nef}(X)$. 
The generating function for the zeroth line bundle cohomology can be written in terms of the Hilbert-Poincar\'e series of $X$,
\begin{equation}
HS(X,t_1,t_2) = \frac{\left(1-t_2^2\right)  \left(1-t_1 t_2\right)^4}{\left(1-t_1\right)^4 \left(1-t_2\right)^6}
\end{equation} 
and the correction terms
\begin{equation}
C(t) = \frac{\left(1-t^4\right)}{\left(1-t\right)^4}~,\qquad D(t) = \frac{\left(1-t^2\right)\left(1+t^4\right)}{\left(1-t\right)^6}~.
\end{equation} 
The cohomology series can then be written as:
\begin{equation}
\begin{aligned}
CS^0(X,\mathcal O_X)&  =\\
&\!\!\!\!\!\!\!\!\!\!\!\!\!\!\!\! \!\!\!\!\!\!\!  \left(\sum_{n=-\infty}^{0} HS(X,t_1^{a_{n+1}} t_2^{-b_{n}},t_1^{c_{n}} t_2^{-a_{n}}){+}HS(X, t_1^{c_{n}} t_2^{-a_{n}},t_1^{a_{n}} t_2^{-b_{n-1}}){+}C(t_1^{c_{n}} t_2^{-a_{n}}){+}D(t_1^{a_{n}} t_2^{-b_{n-1}}),\!\!\!\begin{array}{cc} t_2& \!\!\!t_1 \\0&\!\!\!0 \end{array}\!\!\right){+}\\
&\!\!\!\!\!\!\!\!\!\!\!\!\!\!\!\! \!\!\!\!\!\!\!  \left(\sum_{n=1}^{\infty} HS(X,t_1^{a_{n+1}} t_2^{-b_{n}},t_1^{c_{n}} t_2^{-a_{n}}){+}HS(X, t_1^{c_{n}} t_2^{-a_{n}},t_1^{a_{n}} t_2^{-b_{n-1}}){+}C(t_1^{c_{n}} t_2^{-a_{n}}){+}D(t_1^{a_{n}} t_2^{-b_{n-1}}),\!\!\!\begin{array}{cc} t_1& \!\!\!t_2 \\0&\!\!\!0 \end{array}\!\!\right)~.
\end{aligned}
\end{equation}

The singular threefolds associated with the walls separating the Mori chambers $L^{(n)}$ and $K^{(n)}$ belong, for all $n$, to the deformation family $\IP^5[2~4]$. On the other hand, the singular threefolds associated with the walls separating the Mori chambers $K^{(n)}$ and $L^{(n+1)}$ belong, for all $n$ to another deformation family, $\IP_{[4:1:1:1:1:1:1]}[8]$. As discussed in \cite{{Brodie:2021toe}}, while both rows in the configuration matrix~\eqref{conf7726} lead, for general complete intersections in this family, to isomorphic flops, the details of the two flops are quite different. 
\end{example}

\subsubsection{An example in Picard number $4$}
\begin{example}
Consider a general Calabi-Yau hypersurface $X$ of multi-degree $(2,2,2,2)$ in the product of projective spaces $\IP^1\times\IP^1\times\IP^1\times\IP^1$. Line bundle cohomology formulae on such hypersurfaces have been given in Refs.~\cite{Constantin:2022jyd, Abel:2023zwg} and earlier in Refs.~\cite{Constantin:2018otr, Buchbinder:2013dna}. We define $H_1 {=} \mathcal O_{\IP^1{\times}\IP^1{\times} \IP^1{\times}\IP^1}(1,0,0,0)|_X$, $H_2 {=} \mathcal O_{\IP^1{\times}\IP^1{\times}\IP^1{\times}\IP^1}(0,1,0,0)|_X$, $H_3 = \mathcal O_{\IP^1{\times} \IP^1{\times}\IP^1{\times}\IP^1}(0,0,1,0)|_X$, and $H_4 = \mathcal O_{\IP^1\times \IP^1\times \IP^1\times \IP^1}(0,0,0,1)|_X$. The nef cone of $X$ is inherited from the ambient variety
\begin{equation}
{\rm Nef}(X) = \IR_{\geq 0} H_1 +\IR_{\geq 0} H_2 +\IR_{\geq 0} H_3 +\IR_{\geq 0} H_4~.
\end{equation}
The effective cone consists of an infinite number of Mori chambers, whose generators written in the basis $B=\{H_1,H_2,H_3,H_4\}$ are obtained by the action of an infinite group $G=\langle M_1,M_2,M_3,M_4 \rangle$ on $B$. The generators of $G$ correspond to the following matrices
\begin{equation}
\begin{aligned}
&{\scriptsize M_1 {=} \left(\begin{array}{rrrr}{\!\!\!\!-1}&{0}&{0}&{0}\!\!\\{2}&{1}&{0}&{0}\!\!\\{2}&{0}&{1}&{0}\!\!\\{2}&{0}&{0}&{1}\!\!\end{array}\right)~,~
M_2 {=} \left(\begin{array}{rrrr}{\!\!1}&{2}&{0}&{0}\!\!\\{\!\!0}&{\!\!\!\!{-}1}&{0}&{0}\!\!\\{\!\!0}&{2}&{1}&{0}\!\!\\{\!\!0}&{2}&{0}&{1}\!\!\end{array}\right)~,~}
{\scriptsize M_3 {=} \left(\begin{array}{rrrr}{\!\!1}&{0}&{2}&{0}\!\!\\{\!\!0}&{1}&{2}&{0}\!\!\\{\!\!0}&{0}&{\!\!\!\!{-}1}&{0}\!\!\\{\!\!0}&{0}&{2}&{1}\!\!\end{array}\right)~,~~
M_4 {=} \left(\begin{array}{rrrr}{\!\!1}&{0}&{0}&{2}\!\!\\{\!\!0}&{1}&{0}&{2}\!\!\\{\!\!0}&{0}&{1}&{2}\!\!\\{\!\!0}&{0}&{0}&{\!\!\!\!{-}1}\!\!\end{array}\right)~.}
\end{aligned}
\end{equation}

The generating function for zeroth line bundle cohomology can be written in terms of the Hilbert-Poincar\'e series of $X$, with four counting variables
\begin{equation}
HS(X,t_1,t_2,t_3,t_4) = \frac{1 - t_1^2 t_2^2 t_3^2 t_4^2}{(1 - t_1)^2 (1 - t_2)^2 (1 - t_3)^2 (1 - t_4)^2}
\end{equation} 
and the correction term
\begin{equation}
C(x,y,z) = \frac{(1 - x^2 y^2 z^2)}{(1 - x)^2 (1 - y)^2 (1 - z)^2}~.
\end{equation} 

The zeroth line bundle cohomology is then obtained by summing up the contributions from all the Mori chambers and subtracting correction terms associated with every wall separating two Mori chambers, along the same lines as in the previous two examples.   
\end{example}

\subsection{General hypersurfaces of bi-degree $(3,3)$ in $\IP^2\times \IP^2$}
Let $X$ be a general hypersurface of bi-degree $(3,3)$ in $\IP^2\times\IP^2$, correspondiing to a smooth Calabi-Yau threefold with Picard number 2. We discuss the zeroth and the first cohomology of line bundles over $X$; the second and third cohomologies are then related to these by Serre duality. 

Let $H_1 = \mathcal O_{\IP^2\times \IP^2}(1,0)|_X$ and $H_2 = \mathcal O_{\IP^2\times \IP^2}(0,1)|_X$. The effective, movable and nef cones coincide 
\begin{equation}
{\rm Eff}(X)={\rm Mov}(X)={\rm Nef}(X)=\RR_{\ge 0}H_1+\RR_{\ge 0}H_2~.
\end{equation}
The zeroth cohomology function discussed, e.g.,~in Ref.~\cite{Constantin:2018hvl} is given by:
\begin{equation}
h^0(X,L=\cO_X(k_1H_1+k_2H_2)) = 
\begin{cases}
1,~\text{if } k_1=k_2=0\\
\chi(X,L),~\text{if } k_1,k_2>0\\
\chi(\IP^2,\cO_{\IP^2}(k_1)),~\text{if } k_1>0,k_2=0\\
\chi(\IP^2,\cO_{\IP^2}(k_2)),~\text{if } k_1=0,k_2>0
\end{cases}
\end{equation}

This information can be compactly encoded in the Hilbert-Poincar\'e series: 
\begin{equation}
CS^0(X,\cO_X)=HS(X,t_1,t_2) =  \left( \frac{1-t_1^3 t_2^3}{(1-t_1)^3(1-t_1)^3} \,,\!\begin{array}{cc} t_1& t_2 \\0&0 \end{array}\!\!\right) = \sum_{m_1,m_2\in\mathbb Z} h^0(X,m_1H_1+m_2H_2) \, t_1^{m_1}t_2^{m_2}~.
\end{equation}

The first line bundle cohomology is non-trivial within the two disconnected cones $\IR_{>0}H_2+\IR_{\geq 0}(-H_1+H_2)$ and $\IR_{>0} H_1+\IR_{\geq 0}(H_1-H_2)$ (\fref{bicubic}).

\begin{figure}[htbp]
\begin{center}
     \includegraphics[width=7.1cm]{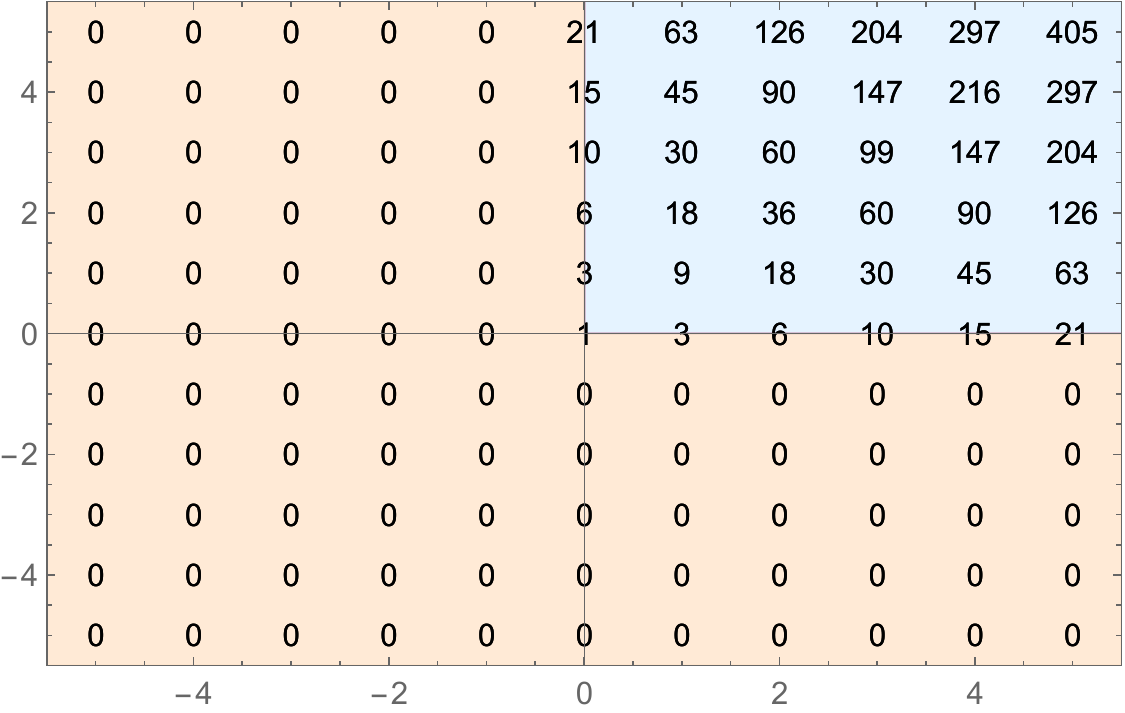}~~~~~
          \includegraphics[width=7.1cm]{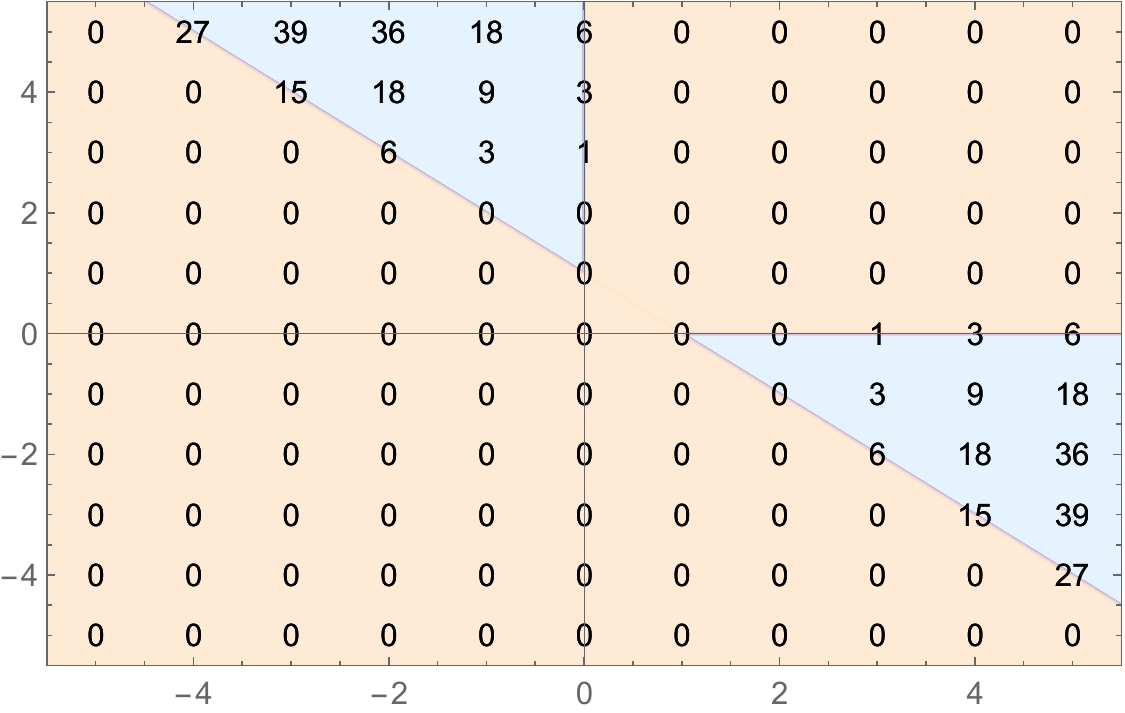}
\caption{\itshape Zeroth and first line bundle cohomology data (left plot and, respectively, right plot) for a general Calabi-Yau threefold hypersurface of bi-degree $(3,3)$ in $\IP^2\times \IP^2$. The numbers indicate cohomology dimensions; the location in the plot indicates the first Chern class.}\label{bicubic}
\end{center}
\end{figure}

\begin{con}\label{con:bicubic} ${\rm (Conjecture~6)}$
Let $X$ be a general hypersurface of bidegree $(3,3)$ in $\IP^2\times \IP^{2}$. Let $H_1 = \mathcal O_{\IP^2\times \IP^2}(1,0)|_X$ and $H_2 = \mathcal O_{\IP^2\times \IP^2}(0,1)|_X$. 
Defining
\begin{equation}
G(x,y) =
\frac{(x^{-1} y)^3 ((1 + x - y)^3 - 1 + 3 x (1 - y))}{(1 - x^{-1} y)^3 (1 - y)^3}~,
\end{equation}
all line bundle cohomology dimensions on $X$ are encoded in the following generating functions, written in the basis $\{H_1,H_2\}$:
\begin{equation}
\begin{aligned}
CS^0(X,\mathcal O_X)&  = 1 +  \left( G(t_1,t_2)\,,\!\begin{array}{cc} t_1& t_2 \\0&0 \end{array}\!\!\right)+\left( G(t_2,t_1)\,,\!\begin{array}{cc} t_1& t_2 \\0&0 \end{array}\!\!\right) \\
CS^1(X,\mathcal O_X)& = 0 +  \left( G(t_1,t_2)\,,\!\begin{array}{cc} t_1& t_2 \\ \infty&0 \end{array}\!\!\right)+\left( G(t_2,t_1)\,,\!\begin{array}{cc} t_1& t_2 \\0&\infty \end{array}\!\!\right) \\
-CS^2(X,\mathcal O_X)& = 2+ \left( G(t_1,t_2)\,,\!\begin{array}{cc} t_1& t_2 \\ 0&\infty \end{array}\!\!\right)+\left( G(t_2,t_1)\,,\!\begin{array}{cc} t_1& t_2 \\ \infty& 0 \end{array}\!\!\right) \\
-CS^3(X,\mathcal O_X)& =  1+ \left( G(t_1,t_2)\,,\!\begin{array}{cc} t_1& t_2 \\ \infty&\infty \end{array}\!\!\right)+\left( G(t_2,t_1)\,,\!\begin{array}{cc} t_1& t_2 \\ \infty&\infty \end{array}\!\!\right) 
\end{aligned}
\end{equation}
\end{con}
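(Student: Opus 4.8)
The plan is to reduce the whole statement to the zeroth and first cohomology. Since $X$ is a Calabi--Yau threefold, $K_X=\mathcal O_X$, so Serre duality gives $h^3(X,L)=h^0(X,L^{-1})$ and $h^2(X,L)=h^1(X,L^{-1})$; it therefore suffices to determine $h^0$ and $h^1$ of $\mathcal O_X(m_1H_1+m_2H_2)$ in closed form for all $(m_1,m_2)\in\IZ^2$ and then to check that the four $G$-expansions reproduce them. The zeroth cohomology is already fixed by the Hilbert--Poincar\'e series $HS(X,t_1,t_2)=(1-t_1^3t_2^3)/((1-t_1)^3(1-t_2)^3)$, because the Cox ring of $X$ equals its homogeneous coordinate ring (Example~\ref{ex:Cox} with $m=n=2$). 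Hence the genuinely new input is $h^1$, together with the identities tying the opaque function $G$ to these piecewise-defined dimensions.

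To compute $h^1$ I would use the Koszul sequence on $\mathcal A=\IP^2\times\IP^2$,
\begin{equation*}
0 \to \mathcal O_{\mathcal A}(m_1{-}3,m_2{-}3) \to \mathcal O_{\mathcal A}(m_1,m_2) \to \mathcal O_X(m_1H_1{+}m_2H_2) \to 0,
\end{equation*}
and its long exact sequence. By K\"unneth together with the vanishing of $H^1(\IP^2,-)$, the odd cohomology $H^1(\mathcal A,-)=H^3(\mathcal A,-)$ vanishes for every line bundle, so the sequence collapses to
\begin{equation*}
h^1(X,\mathcal O_X(m_1,m_2)) = \dim\ker\big[\,\cdot f : H^2(\mathcal A,\mathcal O(m_1{-}3,m_2{-}3)) \to H^2(\mathcal A,\mathcal O(m_1,m_2))\,\big],
\end{equation*}
with the next step giving $h^2(X,\mathcal O_X(m_1,m_2))=\dim\,\mathrm{coker}$ of the same multiplication-by-$f$ map. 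K\"unneth plus Bott on $\IP^2$ make source and target explicit: the source is nonzero only on $\{m_1\ge 3,\,m_2\le 0\}$ and its mirror $\{m_1\le 0,\,m_2\ge 3\}$, which are the neighbourhoods of the two disconnected cones appearing in the statement.

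The crux is the rank of $\cdot f$ in the overlap region $\{m_1\ge 3,\,m_2\le -3\}$, where both source and target are nonzero and both $\ker$ and $\mathrm{coker}$ could a priori contribute. Writing the generic bidegree-$(3,3)$ form as $f=\sum_k p_k(x)q_k(y)$ and Serre-dualising the $H^2$ factors on each $\IP^2$ turns $\cdot f$ into $\sum_k(\cdot p_k)\otimes(\cdot q_k)$ acting between spaces of global sections, and I would prove that for generic $f$ this map has maximal rank. Injectivity on the half $\{m_1+m_2\le 0\}$ is exactly what cuts the actual support of $h^1$ down from the larger ``source nonzero'' region to the cones of the statement (the complementary part then carries $h^2$, consistently with Serre duality). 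Once maximal rank is established, rank--nullity fixes both $\dim\ker$ and $\dim\,\mathrm{coker}$ from the pure K\"unneth dimension difference $\dim H^2(\mathcal A,\mathcal O(m_1{-}3,m_2{-}3))-\dim H^2(\mathcal A,\mathcal O(m_1,m_2))$ (cross-checked against $\chi(X,L)$ from Hirzebruch--Riemann--Roch), yielding closed $\max(0,\cdot)$ formulas for every $h^i$.

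Finally I would verify the four generating-function identities by matching Laurent coefficients region by region. The cleanest is $CS^0$: the claim is the identity $1+G(t_1,t_2)|_{0,0}+G(t_2,t_1)|_{0,0}=HS(X,t_1,t_2)$, where the delicate mechanism is that the factor $(t_1^{-1}t_2)^3/(1-t_1^{-1}t_2)^3$ in $G(t_1,t_2)$ produces monomials with $\deg_{t_2}>\deg_{t_1}$ that must cancel against the corresponding terms of $G(t_2,t_1)$, collapsing a formal Laurent series to the honest power series $HS$. The remaining identities for $h^1,h^2,h^3$ reduce to checking that the same $G$, expanded around the sign patterns $\{0,\infty\}$ and $\{\infty,0\}$ and corrected by the additive integers $1,0,2,1$ (which merely fix the coefficient at the origin, i.e.\ the values $h^i(X,\mathcal O_X)$), reproduces the $\max(0,\cdot)$ formulas above. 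The main obstacle I expect is twofold: establishing the maximal-rank property of $\cdot f$ in the overlap region for generic $f$ (the only place genericity is truly used), and the unilluminating character of the generating-function step --- since $G$ has no transparent geometric origin it can only be checked by coefficient matching rather than derived, and the order-dependence of the multivariate expansions must be handled carefully to avoid spurious terms.
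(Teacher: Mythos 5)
The paper offers no proof of this statement: Conjecture~6 is supported only by explicit computer calculations of cohomology dimensions (the data shown in \fref{bicubic}) together with the consistency check that the three $h^0$ contributions resum to the Hilbert--Poincar\'e series $(1-t_1^3t_2^3)/((1-t_1)^3(1-t_2)^3)$, which is the Cox-ring statement of Example~\ref{ex:Cox}. So you are attempting something the paper does not do, and the architecture of your attempt is sound: Serre duality with $K_X=\cO_X$ reduces everything to $h^0$ and $h^1$; the Koszul sequence together with K\"unneth and the vanishing of $H^1$ and $H^3$ of every line bundle on $\IP^2\times\IP^2$ correctly identifies $h^1(X,\cO_X(m_1H_1+m_2H_2))$ with the kernel of multiplication by $f$ on $H^2(\IP^2\times\IP^2,\cO(m_1{-}3,m_2{-}3))$; and the Bott/K\"unneth support analysis is right.

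As written, though, there are two genuine gaps. The essential one is the maximal-rank claim for $\cdot f$ in the overlap region $\{m_1\ge 3,\ m_2\le -3\}$ and its mirror. After Serre-dualising the $H^2$ factors this becomes a generic maximal-rank statement for a bigraded multiplication map --- a bigraded analogue of the weak Lefschetz property --- and such statements are delicate: genericity of $f$ does not by itself guarantee maximal rank of multiplication between graded pieces (this is the content of Fr\"oberg-type conjectures, open in general). You would need either to exhibit one explicit $f$ (say a sum of decomposable forms or a suitable monomial specialisation) achieving maximal rank in every bidegree, or to find a vanishing-theorem substitute; without this, the closed formulas for $h^1$ and $h^2$ in the overlap region --- which is exactly where the conjecture has content beyond Riemann--Roch --- are unestablished. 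The second gap is that the final step, matching the four expansions of $G$ against the resulting $\max(0,\cdot)$ formulas, is only announced. Since $G$ has no structural derivation, this coefficient check \emph{is} the verification of the specific rational function in the statement, including the order-of-expansion subtleties and the additive constants $1,0,2,1$; it cannot be waved through. With both pieces supplied your argument would upgrade the conjecture to a theorem; as it stands it is a plausible programme rather than a proof.
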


\section{Conclusion and Outlook}
While sheaf cohomology is a powerful tool in algebraic geometry and has direct implications for key quantities in mathematical physics, the current methods in computational algebraic geometry are limited in their applicability. 
 Although in simple cases it is possible to compute sheaf cohomology explicitly from the properties apparent from its definition, it can be hard to obtain these groups for more involved examples such as the manifolds routinely employed in string compactifications. In this paper we proposed exact generating functions for line bundle cohomology dimensions on several complex projective varieties of dimension 2 and higher, of Fano, Calabi-Yau and general type and low Picard number (mostly in Picard number 2). The existence of such generating functions is akin to the relation between the Hilbert series and the Euler characteristic of line bundles. The evidence gathered here suggests that a single generating function can encode not only the zeroth cohomology, but also the higher cohomologies of all line bundles on the variety, which prompts the question of whether these generating functions uniquely determine the variety.

\section*{Acknowledgements}
I am grateful to Callum Brodie for early collaboration on this project and for many discussions on closed-form cohomology formulae. I thank Aurelio Carlucci, James Gray, Andre Lukas, John Christian Ottem, Fabian Ruehele, Hal Schenck and Michael Stillman for discussions. This research was supported by a Stephen Hawking Fellowship, EPSRC grant EP/T016280/1, and by a Royal Society Dorothy Hodgkin Fellowship.

\bibliographystyle{utcaps}

\providecommand{\href}[2]{#2}\begingroup\raggedright\endgroup

\end{document}